\author{A.A. Vasil'eva\footnote{Faculty of Mechanics and Mathematics, Lomonosov Moscow State University; Moscow Center for Fundamental and Applied Mathematics.}}
\title{Widths and entropy numbers of embeddings of Sobolev classes on a H\"{o}lder domain\footnote{The paper was published with the financial support of  the Moscow Center for Fundamental and 
Applied Mathematics at Lomonosov Moscow State University under the agreement 075-15-2025-345.}}
\date{}
\begin{document}

\maketitle

\newenvironment{Biblio}{%
                  \renewcommand{\refname}{\footnotesize REFERENCES}%
                  }

\def\inff{\mathop{\smash\inf\vphantom\sup}}
\renewcommand{\le}{\leqslant}
\renewcommand{\ge}{\geqslant}
\newcommand{\sgn}{\mathrm {sgn}\,}
\newcommand{\inter}{\mathrm {int}\,}
\newcommand{\dist}{\mathrm {dist}}
\newcommand{\supp}{\mathrm {supp}\,}
\newcommand{\R}{\mathbb{R}}
\newcommand{\C}{\mathbb{C}}
\newcommand{\Z}{\mathbb{Z}}
\newcommand{\N}{\mathbb{N}}
\newcommand{\Q}{\mathbb{Q}}
\theoremstyle{plain}
\newtheorem{Trm}{Theorem}
\newtheorem{trma}{Theorem}
\newtheorem{Cor}{Corollary}
\newtheorem{Lem}{Lemma}
\newtheorem{Sta}{Proposition}

\theoremstyle{definition}
\newtheorem{Def}{Definition}
\newtheorem{Rem}{Remark}
\newtheorem{Sup}{Assumption}
\newtheorem{Supp}{Assumption}
\newtheorem{Not}{Notation}
\newtheorem{Exa}{Example}
\renewcommand{\proofname}{\bf Proof}
\renewcommand{\thetrma}{\Alph{trma}}
\renewcommand{\theSupp}{\Alph{Supp}}

\section{Introduction}

In \cite{besov_holder} upper estimates for the entropy numbers of Sobolev classes on a domain with H\"{o}lder boundary were obtained. In the present paper we improve these estimates (in the case when the definition of the Sobolev class involves all partial derivatives of order~$r$): we will show that the exponential in the orders of decrease  is greater than the quantity in  \cite{besov_holder}; moreover, under some conditions on the parameters, the order will be the same as for domains with Lipschitz boundary (while the boundary of the domain may be non-Lipschitz). We also obtain upper estimates for the Kolmogorov, linear and the Gelfand widths.

In addition, we will consider domains of special form, where the H\"older singularity is concentrated on an  $h$-set (see the definition below). For them, we will obtain more sharp upper estimates of widths and entropy numbers. The same lower estimates will be obtained in the case of special $h$-sets constructed in \cite{bricchi}. The last example shows that the exponent in the upper estimates on the class of all H\"{o}lder domains cannot be improved. On the other hand, it will be shown that these lower estimates  hold not for all  $h$-sets; for example, the order of decreae of entropy numbers and widths can be improved if a~plane is parallel to a~coordinate plane.

Let us give necessary definitions.

Let $d\in \N$, and let $\Omega\subset \R^d$ be a bounded domain. Given $f \in L_1^{{\rm loc}}(\Omega)$ and $\alpha = (\alpha_1, \, \dots, \, \alpha_d) \in \Z_+^d$, we denote $|\alpha| = \alpha_1 + \dots +\alpha_d$, $\partial_\alpha f = \frac{\partial^{|\alpha|} f}{\partial x_1^{\alpha_1} \dots \partial x_d^{\alpha_d}}$ (the partial derivative is taken in the sense of distributions). Given $r\in \Z_+$, we denote by $\nabla^r f$ the vector of all partial derivatives $\partial_\alpha f$ such that $|\alpha| = r$, and set
$$
\|\nabla^r f\| _{L_p(\Omega)} = \max _{|\alpha| = r} \|\partial_\alpha f\| _{L_p(\Omega)}.
$$

Let $1\le p\le \infty$, $r\in \N$, and let $\Omega \subset \R^d$ be a bounded domain. We define the Sobolev spaces by the formulas 
$$
{\cal W}^r_p(\Omega) = \{f\in L_1^{{\rm loc}} (\Omega) :\; \|\nabla^r f\|_{L_p(\Omega)} < \infty\},
$$
$$
\tilde{\cal W}^r_p(\Omega) = \{f\in L_1^{{\rm loc}} (\Omega) :\; \|\nabla^r f\|_{L_p(\Omega)} +\|f\|_{L_p(\Omega)} < \infty\}
$$
and the unit balls (the Sobolev classes), by the formulas
$$
W^r_p(\Omega) = \{f\in L_1^{{\rm loc}} (\Omega) :\; \|\nabla^r f\|_{L_p(\Omega)} \le 1\},
$$
$$
\tilde W^r_p(\Omega) = \{f\in L_1^{{\rm loc}} (\Omega) :\; \|\nabla^r f\|_{L_p(\Omega)} +\|f\|_{L_p(\Omega)} \le 1\}.
$$

It is well-known (see, e.g., \cite{sob_book}) that if the domain $\Omega$ is a finite unuion of bounded domains with Lipschitz boundary, then for $r + \frac dq -\frac dp >0$, for $r + \frac dq -\frac dp=0$, $q<\infty$, or for $r=d$, $p=1$, $q=\infty$, the set $\tilde W^r_p(\Omega)$ is bounded in $L_q(\Omega)$ (and $W^r_p(\Omega)$ is the sum of a bounded set and the space of polynomials of degree at most $d-1$). The same assertion holds if $\Omega$ is a John domain \cite{resh1, resh2, bojar} or satisfies a decaying flexible cone condition \cite{besov_rasp}. Embedding theorems for the Sobolev spaces on various irregular domains with zero angles (e.g., on H\"{o}lder domains, on domains with a flexible $\sigma$-cone condition an so on) were obtained in \cite{besov84, labutin1, labutin2, trushin, haj_kosk, kilp_mal, maz_pob, besov01, besov10, besov14, besov15, besov22, caso_ambr}.

Now we give the definitions of the Gelfand, Kolmogorov and linear widths.

Let $X$ be a linear normed space. Denote by $X^*$ the dual space to $X$, by ${\cal L}_n(X)$, the family of all linear subspaces in $X$ of dimension at most $n$, and by $L(X, \, Y)$, the space of linear continuous operators $A:X\to Y$. Given $A\in L(X, \, X)$, we denote by ${\rm rk}\, A$ the dimension of the range of $A$. Let $M\subset X$ be a non-empty set, $n\in \Z_+$. The Kolmogorov, linear and the Gelfand $n$-widths of $M$ in $X$ are defined, respectively, by the formulas 
$$
d_n(M, \, X) = \inf _{L\in {\cal L}_n(X)} \sup _{x\in M} \inf
_{y\in L} \|x-y\|,
$$
$$
\lambda_n(M, \, X) = \inf _{A\in L(X, \, X), \, {\rm rk}\, A\le n} \sup _{x\in M} \|x-Ax\|,
$$
$$
d^n(M, \, X) = \inf _{x_1^*, \, \dots, \, x_n^*\in X^*} \sup \Bigl\{\|x\|:\; x\in M\cap \Bigl(\cap _{j=1}^n \ker x_j^*\Bigr)\Bigr\}
$$
(in the last definition, we set $\sup \varnothing = 0$).

The problem of estimating the widhts of Sobolev classes on one-dimensional and multi-dimensional Lipschitz domains was studied in \cite{tikh60, babenko, mityagin, makovoz, ismagilov, majorov, bib_kashin, kashin_sma, kulanin1, kulanin2, galeev85, galeev87, teml3, teml4, mal25} (notice that the paper \cite{mal25} is a recent result, which completely solves the Sobolev width problem on an interval; the critical case of $W^1_1$ in $L_q$, $2<q<\infty$, is considered). For John domains the order estimates of the widths were obtained in \cite{besov_cusp} and \cite{vas_john}; these estimates are the same as for a cube. In \cite{vas_john} a more general case of weighted spaces was considered, and the proof was more complicated than in \cite{besov_cusp}. For $r = 1$, $p = q$ and irregular domains (with zero angles) the estimates for linear widths were obtained by W.D. Evans, D.J. Harris and Y. Saito in \cite{evans_harris, evans_harris2} (more precisely, in these papers the approximation numbers were considered, but in the case of compact embeddings they are equal to the linear widths \cite{heinr}). In the case $p<q$, for a domain with a singular cusp of the form $$\{(x', \, x_d)\in \R^{d-1}\times \R:\, 0< x_d< 1, \, |x'| < x_d^\sigma\}$$ the order estimates for the Kolmogorov widths were obtained in \cite{besov_cusp} (here $\sigma > 1$, $r+(\sigma(d-1)+1)(1/q-1/p)>0$); the orders were found to be the same as for the cube. In \cite{vas_cusp} the domains $$\{(x', \, x_d)\in \R^{d-1}\times \R:\, 0< x_d< 1/2, \, |x'| < x_d^\sigma|\log x_d|^\alpha\}$$ were considered, where $r = (\sigma(d-1)+1)(1/p-1/q)$; in this case, the orders may change already for small $\alpha>0$. 

Now we give the definition of the entropy numbers.

Let $X$, $Y$ be normed spaces, and let $T\in L(X,\, Y)$, $n\in \N$. The entropy numbers of the operator $T$ are defined by
$$
e_n(T) = \inf \Bigl\{ \varepsilon>0:\; \exists y_1, \, \dots, \, y_{2^{n-1}}\in Y:\; T(B_X) \subset \cup _{i=1}^{2^{n-1}}(y_i+\varepsilon B_Y)\Bigr\}.
$$

If $\Omega$ is a cube, the order estimates for the entropy numbers of the embedding operator ${\rm Id}:\tilde {\cal W}^r_p(\Omega)\to L_q(\Omega)$ were obtained in \cite{bir_sol}, \cite{edm_tr}. In \cite{bir_sol} the proof depends on an approximation of a function  $f\in \tilde {\cal W}^r_p(\Omega)$ by 
a piecewise polynomial function with an appropriate partition. In \cite{edm_tr} the discretization method and Sch\"{u}tt's theorem \cite{schutt} about estimates for the entropy numbers of finite-dimensional balls were applied. In \cite{vas_entr} estimates for the entropy numbers were obtained for John domains; they are the same as for a cube (again, here the more general case of weighted spaces was considered; in the non-weighted case the proof can be significantly simplified). In \cite{besov_holder} upper estimates for the entropy numbers were obtained in the case of H\"{o}lder domains and their anisotropic generalizations. Here these estimates will be improved.

We also mention the paper \cite{edm89}, where the upper estimates for the entropy and approximation numbers of the embedding operator of the anisotropic Sobolev space on a domain with a horn condition into the Orlicz space were obtained.

Now we formilate the main results.

Let the functions $\varphi_i:(0, \, 1] \rightarrow (0, \, 1]$ $(1\le i\le d-1)$ satisfy the following conditions:
\begin{align}
\label{phi_prop} \varphi_i(t)\le a_*t, \quad \varphi_i \uparrow, \quad \varphi_i(2t)\le a_* \cdot \varphi_i(t), \quad 0\le t\le 1, \; 1\le i\le d-1,
\end{align}
where $a_*\ge 1$.

\begin{Def}
Let $\Omega \subset \R^d$ be a domain. We write $\Omega\in {\cal G}_{\varphi_1, \, \dots, \, \varphi_{d-1}}$ if
\begin{align}
\label{omega_def}
\Omega = \{(x', \, x_d):\; x'\in (0, \, 1)^{d-1}, \, 0<x_d<\psi(x')\},
\end{align}
where $\psi:[0, \, 1]^{d-1} \rightarrow (0, \, \infty)$,
\begin{align}
\label{psi_prop} \begin{array}{c} |\psi(x')-\psi(y')| \le t \quad \text{if }|x_i-y_i| \le \varphi_i(t), \quad i=1, \, \dots, \, d-1; \\ \min _{x'\in [0, \, 1]^{d-1}} \psi(x')\in [1, \, 2].\end{array}
\end{align}
We write $\Omega\in {\cal G}'_{\varphi_1, \, \dots, \, \varphi_{d-1}}$ if $\Omega = \cup _{j=1}^{j_0} \Omega_j$, $\Omega_j = T_j(G_j)$, where $G_j \in {\cal G}_{\varphi_1, \, \dots, \, \varphi_{d-1}}$, and $T_j$ is a composition of a homothety, an orthogonal operator and a translation.
\end{Def}

In particular, a finite union of H\"{o}lder domains of order $1/\sigma$ belongs to ${\cal G}'_{\varphi_1, \, \dots, \, \varphi_{d-1}}$ with $\varphi_i(t) = Ct^\sigma$, $i=1, \, \dots, \, d$ (here $\sigma \ge 1$).

We introduce notation for order inequalities and equalities. Let $X$, $Y$ be sets, and let $f_1$, $f_2:\ X\times Y\rightarrow \mathbb{R}_+$. We write $f_1(x, \, y)\underset{y}{\lesssim} f_2(x, \, y)$ (or
$f_2(x, \, y)\underset{y}{\gtrsim} f_1(x, \, y)$), if for any
$y\in Y$ there is $c(y)>0$ such that $f_1(x, \, y)\le
c(y)f_2(x, \, y)$ for each $x\in X$; $f_1(x, \,
y)\underset{y}{\asymp} f_2(x, \, y)$ if $f_1(x, \, y)
\underset{y}{\lesssim} f_2(x, \, y)$ and $f_2(x, \,
y)\underset{y}{\lesssim} f_1(x, \, y)$.

We say that a function is locally absolutely continuous on a convex subset of $\R$ if it is absolutely continuous on each segment lying in this set.

Let $u:[1, \, \infty) \rightarrow (0, \, \infty)$ be a locally absolutely continuous function such that $\lim \limits _{t\to +\infty} t \frac{u'(t)}{u(t)} = 0$, $\theta>0$. Then (see \cite[Lemma 2]{vas_width}) for sufficiently large $s\ge 1$ the equation $t^\theta u(t) = s$ has a unique solution $t(s)$, and $t(s) \underset{\theta,u}{\asymp} s^{1/\theta} \varphi _{\theta,u}(s)$, where $\varphi _{\theta,u}$ is a locally absolutely continuous function such that $\lim \limits _{s\to +\infty} s \frac{\varphi _{\theta,u}'(s)}{\varphi _{\theta,u}(s)} = 0$.

Let $\Lambda:(0, \, 1] \rightarrow (0, \, \infty)$ be a locally absolutely continuous function such that 
\begin{align}
\label{lam_slow} \lim \limits _{t\to +0} t\frac{\Lambda'(t)}{\Lambda(t)} = 0.
\end{align}
We set $\psi_\Lambda(t) = \frac{1}{\Lambda(1/t)}$, $t\in [1, \, \infty)$. Then $\lim \limits _{t\to +\infty} t\frac{\psi_{\Lambda}'(t)}{\psi_\Lambda(t)} = 0$.

Denote $\mathfrak{Z}_* = (p, \, q, \, r, \, d, \, \Omega)$.

\begin{Trm}
\label{main_entr} Let $1\le p\le q\le\infty$, $\Omega \in {\cal G}'_{\varphi_1, \, \dots, \, \varphi_{d-1}}$, where $\varphi_i$ satisfy \eqref{phi_prop}. Suppose that 
\begin{align}
\label{prod_phi_i}
\prod _{i=1}^{d-1} \varphi_i(t) = t^{\sigma(d-1)}\Lambda(t),
\end{align}
where $\sigma \ge 1$ and the function $\Lambda$ is locally absolutely continuous and satisfies \eqref{lam_slow}. Let $r\in \N$, 
\begin{align}
\label{emb_hol_cond}
r + (\sigma(d-1)+1)(1/q-1/p) > 0.
\end{align}
We set
\begin{align}
\label{a1a2def}
\alpha_1=\frac rd, \quad\alpha_2=\frac{r + 1/q-1/p}{\sigma(d-1)},
\end{align}
\begin{align}
\label{t1t2def}
\tau_1(n)=1, \quad \tau_2(n) = (\varphi_{\sigma(d-1),\psi_\Lambda}(n))^{-r-(\sigma(d-1)+1)(1/q-1/p)} [\psi_\Lambda(n^{\frac{1}{\sigma(d-1)}}\varphi_{\sigma(d-1),\psi_\Lambda}(n))]^{1/p-1/q}.
\end{align}
Let $\alpha_1\ne \alpha_2$, $j_*\in \{1, \, 2\}$, $\alpha_{j_*}= \min \{\alpha_1, \, \alpha_2\}$. Then
\begin{align}
\label{e_n_est}
e_n({\rm Id}:\tilde {\cal W}^r_p(\Omega)\rightarrow L_q(\Omega)) \underset{\mathfrak{Z}_*}{\lesssim} n^{-\alpha_{j_*}} \tau_{j_*}(n),
\end{align}
where ${\rm Id}$ is the embedding operator.
\end{Trm}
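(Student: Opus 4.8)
**The plan is to reduce the estimate on a Hölder domain to a family of estimates on cubes via an appropriate dyadic decomposition of $\Omega$, and then combine the known entropy-number estimates on cubes.** Since $\Omega\in{\cal G}'_{\varphi_1,\dots,\varphi_{d-1}}$ is a finite union of affine images of domains in ${\cal G}_{\varphi_1,\dots,\varphi_{d-1}}$, and entropy numbers behave well under finite unions, it suffices to treat a single $G\in{\cal G}_{\varphi_1,\dots,\varphi_{d-1}}$, i.e. the subgraph of a function $\psi$ with the modulus-of-continuity control \eqref{psi_prop}. The first step is to build, for each dyadic scale $k$, a covering of the ``boundary strip'' $\{x:\ 0<\psi(x')-x_d\lesssim 2^{-k}\}$ by translates of anisotropic bricks of sidelengths $\varphi_1(2^{-k}),\dots,\varphi_{d-1}(2^{-k})$ in the horizontal directions and $2^{-k}$ in the $x_d$-direction; the conditions \eqref{phi_prop} (doubling, monotonicity, $\varphi_i(t)\le a_*t$) guarantee that such bricks fit inside $\Omega$ up to bounded overlap, and their number at scale $k$ is of order $\bigl(\prod_{i=1}^{d-1}\varphi_i(2^{-k})\bigr)^{-1}=2^{k\sigma(d-1)}\Lambda(2^{-k})^{-1}$ by \eqref{prod_phi_i}. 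Away from the boundary (the part of $\Omega$ at distance $\gtrsim 1$ from the graph) one has an ordinary Lipschitz piece, handled by the cube case directly.

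**The second step is the local estimate on each brick.** On a brick $Q$ of the above dimensions, after an anisotropic affine change of variables sending $Q$ to the unit cube, the Sobolev seminorm $\|\nabla^r f\|_{L_p(Q)}$ transforms with explicit powers of the sidelengths, and one invokes the classical estimate (Birman--Solomyak / Edmunds--Triebel, cited here as \cite{bir_sol,edm_tr}) for $e_m(\mathrm{Id}:\tilde{\cal W}^r_p(\text{cube})\to L_q(\text{cube}))\asymp m^{-r/d}$ in the relevant range, with an extra factor accounting for the $L_q\to L_p$ normalization of the brick volume and the anisotropy. Because the brick is anisotropic, the scaling factor is governed by the smallest sidelength direction; this is where the exponent $\sigma(d-1)+1$ and the quantities $\alpha_2$, $\tau_2$ in \eqref{a1a2def}--\eqref{t1t2def} enter, via the function $\varphi_{\sigma(d-1),\psi_\Lambda}$ solving $t^{\sigma(d-1)}\psi_\Lambda(t)\asymp s$. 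The key bookkeeping is to express the per-brick contribution at scale $k$ in terms of $\psi_\Lambda$ evaluated at the appropriate argument.

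**The third step is to assemble the global estimate by distributing a budget of $n$ ``entropy bits'' across the scales $k$ and across the $\asymp 2^{k\sigma(d-1)}\Lambda(2^{-k})^{-1}$ bricks at each scale, then optimizing.** This is the standard Birman--Solomyak summation: one chooses the number of bits per brick as a decreasing function of $k$ so that the resulting geometric-type series over $k$ converges, uses the embedding condition \eqref{emb_hol_cond} to ensure summability of the boundary contribution, and reads off the final rate. Two regimes appear depending on whether the ``interior'' rate $\alpha_1=r/d$ or the ``boundary'' rate $\alpha_2=(r+1/q-1/p)/(\sigma(d-1))$ dominates; the min is $\alpha_{j_*}$, and the slowly varying correction $\tau_{j_*}$ comes out of the $\Lambda$-factors through Lemma 2 of \cite{vas_width} (the $\varphi_{\theta,u}$ construction recalled in the excerpt) and the relation $\psi_\Lambda(t)=1/\Lambda(1/t)$.

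**The main obstacle** I expect is the careful handling of the anisotropic bricks near the singular boundary: one must verify that the bricks genuinely lie in $\Omega$ (using $\varphi_i(t)\le a_*t$ and the modulus bound \eqref{psi_prop}), control the overlap so that the local Sobolev seminorms sum to at most the global one, and track the anisotropic scaling through the cube estimate without losing the slowly varying factor --- in particular, showing that the logarithmic/slowly-varying term produced by summing over scales is exactly $[\psi_\Lambda(n^{1/(\sigma(d-1))}\varphi_{\sigma(d-1),\psi_\Lambda}(n))]^{1/p-1/q}$ and not something coarser. Establishing that the slowly varying functions $\Lambda$, $\psi_\Lambda$, $\varphi_{\theta,u}$ can be freely moved in and out of sums and inverted with only bounded multiplicative error (which is what \eqref{lam_slow} and the $\lim t\varphi'/\varphi=0$ conditions buy us) is the technical heart of the argument.
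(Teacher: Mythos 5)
Your skeleton (anisotropic bricks at dyadic scales, local estimates after affine rescaling, then a budget-optimization over scales) matches the general shape of the paper's argument, but there is a genuine gap at the decisive point: the assembly step. Per-brick estimates plus bounded overlap are \emph{not} enough to close the Birman--Solomyak summation on a H\"older domain, because at the cutoff scale $K$ the unresolved region is a union of infinitely many bricks of all scales $k\ge K$ accumulating at the non-Lipschitz part of the boundary, and on that tail you must approximate $f$ by a \emph{single} polynomial (or by the polynomial fitted on the parent brick) with error $\lesssim 2^{-K(r+1/q-1/p)}\bigl(\prod_i\varphi_i(2^{-K})\bigr)^{1/q-1/p}\|\nabla^r f\|_{L_p}$. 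This cannot be obtained from the local cube estimates: for $p<q$ the local factors $|E|^{1/q-1/p}$ blow up as the bricks shrink, and the $L_p$-bound on $f$ in $\tilde{\cal W}^r_p$ does not convert into an $L_q$ bound on the tail (H\"older goes the wrong way). The paper's whole technical core is exactly this missing ingredient: a Poincar\'e-type inequality on every ``subtree domain'' $\Omega_{{\cal A}'}$ (Lemma \ref{emb_est1} and Lemma \ref{lem2est}), proved by an integral representation along polygonal chains of bricks (Newton--Leibnitz plus Reshetnyak's method, giving the kernel bound \eqref{kxy_est}) combined with a weighted Hardy-type inequality for summation operators on trees (\eqref{sum_est_0}, which needs the geometric decay \eqref{v_w_cond} and the monotonicity of the sequence \eqref{est_seq} --- this is precisely where \eqref{emb_hol_cond} enters). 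Only with this subtree estimate in hand (Assumption \ref{sup1}) do the abstract discretization theorems (Theorems \ref{trm1}, \ref{trm2} of \cite{vas_entr}, \cite{vas_width}) deliver the rates, including the extra gain $1/p-1/q$ over the raw approximation exponent $\lambda_*/\gamma_*$, which comes from Sch\"utt-type finite-dimensional entropy estimates inside that machinery rather than from naive bit-counting. Your step 3 tacitly assumes the tail is harmless (``summability of the boundary contribution'') but provides no mechanism for it; this is the step that would fail as written.

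A smaller inaccuracy: on an anisotropic brick the $r$-th order approximation term is governed by the \emph{largest} sidelength (the vertical one, $\asymp 2^{-k}$, since $\varphi_i(t)\le a_*t$), not by the smallest; the exponent $\sigma(d-1)+1$ enters through the brick volume $\asymp 2^{-k(\sigma(d-1)+1)}\Lambda(2^{-k})$ raised to the power $1/q-1/p$. The resulting per-brick constant $2^{-k(r+(\sigma(d-1)+1)(1/q-1/p))}\Lambda(2^{-k})^{1/q-1/p}$ is indeed the weight $w_*$ used in the paper, so your bookkeeping of $\alpha_2$, $\tau_2$ would come out right once the subtree estimate is supplied --- but supplying it is the heart of the proof, not a routine verification.
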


In \cite{besov_holder} the case of $\varphi_i(t) = t^{\lambda_i}$, where $\lambda_i\ge 1$ ($i=1, \, \dots, \, d-1$), $\lambda_d:=1$ was considered. Then $\sigma(d-1)+1 = \sum \limits _{i=1}^d \lambda_i$. The upper estimate for $n$th entropy number was given by the formula $n^{-\frac{r}{\sigma(d-1)+1}}$. In \S 7 it will be proved that each domain from \cite{besov_holder} is a finite union of domains from ${\cal G}'_{\varphi_1, \, \dots, \, \varphi_{d-1}}$ and domains satisfying the cone condition. Since $\frac{r}{\sigma(d-1)+1} < \frac{r+1/q-1/p}{\sigma(d-1)}$ by \eqref{emb_hol_cond}, Theorem \ref{main_entr} improves the estimates from \cite{besov_holder}. We notice that for a special subclass of H\"{o}lder domains in \cite{besov_holder} the estimates of the entropy numbers were obtained for wider Sobolev classes defined by conditions in  the  $L_p$-norm on the function and its derivatives 
 $\frac{\partial ^r}{\partial x_i^r}$, $1\le i\le d$, with no  conditions on the mixed derivatives.

We denote by $\vartheta_n(W^r_p(\Omega), \, L_q(\Omega))$ the Kolmogorov, linear or Gelfand width of the set $W^r_p(\Omega)$ in $L_q(\Omega)$; by $\hat q$ we denote, respectively, $q$, $\min \{q, \, p'\}$ and $p'$. A similar notation is used if  $W^r_p(\Omega)$ is replaced by $\tilde W^r_p(\Omega)$.

\begin{Trm}
\label{main_widths} Suppose that the conditions of Theorem {\rm \ref{main_entr}} hold, $1<p\le q<\infty$, $\alpha_1$ and $\alpha_2$ are defined by \eqref{a1a2def}, $\tau_1$ and $\tau_2$ are defined by \eqref{t1t2def}.
\begin{enumerate}
\item Let $\hat q\le 2$ or $p = q$, and let $\alpha_1\ne \alpha_2$. Then
$$
\vartheta_n(W^r_p(\Omega), \, L_q(\Omega)) \underset{\mathfrak{Z}_*}{\lesssim} n^{-\alpha_{j_*}+1/p-1/q} \tau_{j_*}(n),
$$
where $j_*$ is defined by the condition $\alpha_{j_*} = \min \{\alpha_1, \, \alpha_2\}$.

\item Let $p<q$, $\hat q>2$. We set
$$
\theta_1 = \frac rd + \frac 1q-\frac 1p + \min \Bigl\{ \frac 1p-\frac 1q, \, \frac 12-\frac{1}{\hat q}\Bigr\}, \quad \theta_2 = \frac{\hat q}{2} \Bigl( \frac rd+\frac 1q-\frac 1p\Bigr),
$$
$$
\theta_3 = \frac{r + (\sigma(d-1)+1)(1/q-1/p)}{\sigma(d-1)} + \min \Bigl\{ \frac 1p-\frac 1q, \, \frac 12-\frac{1}{\hat q}\Bigr\},
$$
$$
\theta_4 = \frac{\hat q}{2} \cdot \frac{r + (\sigma(d-1)+1)(1/q-1/p)}{\sigma(d-1)},
$$
$\tilde \tau_1(n)=\tilde \tau_2(n) = 1$, $\tilde \tau_3(n) = \tau_2(n)$, $\tilde \tau_4(n) = \tau_2(n^{\hat q/2})$.
Suppose that there is $j_*\in \{1, \, 2, \, 3, \, 4\}$ such that
$\theta_{j_*}< \min _{j\ne j_*} \theta_j$. Then
\begin{align}
\label{poper_est_main}
\vartheta_n(W^r_p(\Omega), \, L_q(\Omega)) \underset{\mathfrak{Z}}{\lesssim} n^{-\theta_{j_*}} \tau_{j_*}(n).
\end{align}
\end{enumerate}
\end{Trm}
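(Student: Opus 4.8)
The plan is to reduce the width problem on $\Omega$ to a finite-dimensional one --- widths of a weighted direct sum of the balls $B^m_p$ in $l^m_q$ --- and then to combine the known order estimates for widths of finite-dimensional balls with a general theorem on widths of such weighted sums (of the type proved in \cite{vas_width}, \cite{vas_john}). First, by arguments as in \cite{vas_john} (a partition of unity subordinate to the pieces $\Omega_j=T_j(G_j)$, extension operators on the Lipschitz regions where they overlap, and the subadditivity-type properties of $d_n$, $\lambda_n$, $d^n$ over direct sums) it suffices to treat a single domain $G\in {\cal G}_{\varphi_1,\dots,\varphi_{d-1}}$. For such $G$ I would use the anisotropic decomposition already built in the proof of Theorem \ref{main_entr}: $G$ is covered, with bounded overlap, by affine images of the unit cube organised by a scale parameter; the interior part of $G$ behaves like a fixed Lipschitz domain and carries the ordinary $d$-dimensional scaling, which produces $\alpha_1=r/d$, while near the rough top boundary one finds, at vertical scale $2^{-j}$, about $m_j$ boxes whose horizontal sizes are governed by the inverse functions of the $\varphi_i$ --- here the effective dimension is $\sigma(d-1)$ (since $\prod_i\varphi_i(t)=t^{\sigma(d-1)}\Lambda(t)$), which produces $\alpha_2=(r+1/q-1/p)/(\sigma(d-1))$; the slowly varying factor $\Lambda$ is carried through $\psi_\Lambda$ and the solution of $t^{\sigma(d-1)}\psi_\Lambda(t)=n$, which is exactly what $\varphi_{\sigma(d-1),\psi_\Lambda}$ and $\tau_2$ record.

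Next, on each box $Q$ the Bramble--Hilbert lemma (equivalently the Sobolev--Poincar\'{e} inequality), after rescaling to the unit cube, supplies a polynomial $P_Q$ of degree $<r$ with $\|f-P_Q\|_{L_q(Q)}\lesssim \ell(Q)^{r+d/q-d/p}\|\nabla^r f\|_{L_p(Q)}$. Iterating over dyadic levels, for $f\in W^r_p(G)$ I would write a multiscale expansion $f=\sum_{j\le J}g_j+(\text{remainder})$, with $g_j$ piecewise polynomial on the level-$j$ partition; condition \eqref{emb_hol_cond} makes the remainder $L_q$-small once $J$ is large, and the optimal choice of $J$ is what inserts the correction $\tau_2$ (and, in Part~2, $\tilde\tau_{j_*}$). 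The map $f\mapsto(g_j)_{j\le J}$ sends $W^r_p(G)$ into a weighted direct sum of finite-dimensional spaces, the $j$-th summand being a direct sum over the boxes $Q$ at level $j$ of copies of the space of polynomials of degree $<r$; the bound $\|\nabla^r f\|_{L_p(G)}\le 1$ turns into a single (coupled) $l_p$-type bound on the coefficient vector, while $\|g\|_{L_q(G)}$ is comparable to an $l_q$-type sum over boxes of local norms, and rescaling absorbs the powers $\ell(Q)^{r+d/q-d/p}$ into weights $\lambda_j\asymp 2^{-j(r+d/q-d/p)}$. The passage from $W^r_p$ to $\tilde W^r_p$ (quotienting out the low-degree polynomials) and the appearance of $\hat q$ in place of $q$ for the three widths are handled in the standard way by duality and the definition of $\hat q$.

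It then remains to estimate $\vartheta_n$ of the weighted sum $\bigoplus_j\lambda_j B_p^{m_j}\hookrightarrow\bigoplus_j l_q^{m_j}$ by distributing the budget $n=\sum_j n_j$ optimally among the layers and using the known orders of $d_n$, $\lambda_n$, $d^n$ of finite-dimensional balls (see e.g. \cite{galeev85}, \cite{bib_kashin}, \cite{galeev87}) together with subadditivity of widths over direct sums. In the regime of Part~1 ($\hat q\le 2$ or $p=q$) the finite-dimensional widths lie in the range where no Kashin--Gluskin gain occurs, the optimization yields exactly the entropy exponent of Theorem \ref{main_entr} shifted by $1/p-1/q$, namely $-\alpha_{j_*}+1/p-1/q$, with the factor $\tau_{j_*}$. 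In the regime of Part~2 ($p<q$, $\hat q>2$) the finite-dimensional widths additionally exhibit the Kashin--Gluskin phenomenon: a gain of a power of the budget appears only after the budget exceeds a power $\asymp m^{2/\hat q}$ of the dimension, and since the layer dimensions $m_j$ grow geometrically this threshold effectively rescales $n\mapsto n^{\hat q/2}$. Hence, besides the boundary-type exponents $\theta_1$ (from the interior part) and $\theta_3$ (from the boundary layers), each carrying $\min\{1/p-1/q,\,1/2-1/\hat q\}$, one also obtains the Kashin-type exponents $\theta_2=\frac{\hat q}{2}(\alpha_1+1/q-1/p)$ and $\theta_4=\frac{\hat q}{2}(\alpha_2+1/q-1/p)$ with correction $\tilde\tau_4(n)=\tau_2(n^{\hat q/2})$; the width of the whole sum is governed by whichever of $\theta_1,\dots,\theta_4$ is smallest, which is exactly \eqref{poper_est_main}.

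The hard part, as in \cite{vas_john}, will be the accurate bookkeeping of the discrete model: estimating the cross-level interaction and the tail of the multiscale expansion precisely enough that all the slowly varying factors collapse into exactly $\tau_{j_*}$ (respectively $\tilde\tau_{j_*}$), and identifying, for a given $n$, which finite-dimensional width estimate and which of the four candidate exponents is active --- the whole case analysis being absorbed into the single hypothesis $\theta_{j_*}<\min_{j\ne j_*}\theta_j$. The three widths $d_n$, $\lambda_n$, $d^n$ are run in parallel throughout, the only difference entering through the conjugate index $\hat q$ in the finite-dimensional estimates.
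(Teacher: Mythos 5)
Your high-level architecture (tree/box decomposition of the H\"older domain, discretization to weighted sums of finite-dimensional balls, budget allocation with Kashin--Gluskin thresholds producing $\theta_1,\dots,\theta_4$ and the rescaling $n\mapsto n^{\hat q/2}$) reproduces, in effect, the general machinery that the paper does not reprove but simply imports as Theorem \ref{trm2} from \cite{vas_width} after verifying Assumptions \ref{sup1}--\ref{sup3}. The genuine gap is that you never supply the one ingredient that is actually new and specific to this theorem, namely Assumption \ref{sup1}: a \emph{single} polynomial approximation of $f$ on the union $\Omega_{{\cal A}'}$ of an \emph{arbitrary subtree} rooted at depth $k_0$, with constant $2^{-k_0(r+1/q-1/p)}\bigl(\prod_i\varphi_i(2^{-k_0})\bigr)^{1/q-1/p}$ (Lemmas \ref{emb_est1}, \ref{lem2est}, estimate \eqref{appr_est}). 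Per-box Bramble--Hilbert estimates give only Assumption \ref{sup2}; they cannot yield the subtree-global bound, because the tail region at depth $k_0$ is itself a H\"older cusp (not a John domain), so no off-the-shelf Sobolev--Poincar\'e inequality applies there with the stated constant. In the paper this is exactly where the work lies: an integral representation of $f(x)$ along polygonal chains threading the boxes of the tree (with Reshetnyak's method for $r\ge 2$), kernel estimates \eqref{kxy_est}, a weighted Hardy-type summation inequality on the tree (the bound \eqref{sum_est_0} under condition \eqref{v_w_cond}), and the Riesz-potential convolution theorem for the near-diagonal part. Your plan relegates precisely this to ``accurate bookkeeping of the cross-level interaction and the tail,'' but without such a global-on-cusp estimate the tail of your multiscale expansion cannot be discarded at bounded rank with the right error, and the exponent $\alpha_2=(r+1/q-1/p)/(\sigma(d-1))$ --- note the $1/q-1/p$, not $d/q-d/p$ --- is exactly the signature of that estimate, not of box-wise approximation plus counting.

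Two secondary points. First, your local estimate $\|f-P_Q\|_{L_q(Q)}\lesssim \ell(Q)^{r+d/q-d/p}\|\nabla^r f\|_{L_p(Q)}$ is the isotropic form; the boxes here are strongly anisotropic (horizontal sides $\asymp\varphi_i(2^{-k})$, vertical side $\asymp 2^{-k}$), and the correct form, used in verifying Assumption \ref{sup2}, is $(\mathrm{mes}\,Q)^{1/q-1/p}$ times the \emph{longest} side to the power $r$; with the isotropic form the exponents come out wrong. Second, your framing of the multiscale expansion as ``$g_j$ piecewise polynomial on the level-$j$ partition'' does not match the geometry: the level-$k$ boxes form a thin layer at distance $\asymp 2^{-k}$ from the upper boundary rather than a full covering of $\Omega$ at each scale, so any telescoping must run along chains of the tree --- which again forces the chain/Hardy-on-tree analysis rather than layerwise bookkeeping.
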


\begin{Rem}
From the inclusion $\tilde W^r_p(\Omega) \subset W^r_p(\Omega)$ it follows that the same upper estimates hold for $\vartheta_n(\tilde W^r_p(\Omega), \, L_q(\Omega))$.
\end{Rem}

If $\frac rd < \frac{r+1/q-1/p}{\sigma(d-1)}$, then the upper estimate for the entropy numbers and widths is the same as for $\Omega = (0, \, 1)^d$. In can be proved by a standard method that in this case the same lower estimate holds.

Now we consider the special case of domains $\Omega$ of form \eqref{omega_def}, when $\psi$ is a function of a distance to an $h$-set.

By $B_t(x)$ we denote the ball in $(\R^k, \, \|\cdot\|_{l_\infty^k})$ of radius $t$ centered at the point $x$, where $\|(x_1, \, \dots, \, x_k)\|_{l_\infty^k} = \max _{1\le i\le k} |x_i|$.

\begin{Def}
\label{def_h_set}
Let $\Gamma \subset \R^k$ be a non-empty compact set, and let $h:(0, \, 1] \rightarrow (0, \, \infty)$ be a non-decreasing function. We say that $\Gamma$ is an $h$-set if there are a constant $c_*\ge 1$ and a finite $\sigma$-additive Borel measure $\mu$ on $\R^k$ such that ${\rm supp}\, \mu = \Gamma$ and 
\begin{align}
\label{h_set_def} c_*^{-1} h(t) \le \mu(B_t(x)) \le c_* h(t), \quad x\in \Gamma, \quad t\in (0, \, 1].
\end{align}
\end{Def}

\begin{Rem}
If an $h$-set exists, then the function $h$ satisfies the doubling condition, i.e., $h(2t)\le b_*h(t)$, $0<t\le \frac 12$, where $b_*>0$ does not depend on $t$.
Hence if we consider balls with respect to an arbitrary norm on $\R^k$, the condition \eqref{h_set_def} will be true with some other constant $c_*$.
\end{Rem}

Lipschitz $l$-dimensional surfaces ($l\in \{0, \, 1, \, \dots, \, k-1\}$; here $h(t)=t^l$), the Koch's curve (see \cite[pp. 66--68]{mattila}) and some Cantor-type sets (see \cite{bricchi}) are examples of $h$-sets.

For a non-empty set $A\subset \R^k$ and $x\in \R^k$ we denote
$$
{\rm dist}(x, \, A) = \inf _{y\in A}\|x-y\|_{l_\infty ^k}.
$$

\begin{Def}
Let $0\le \theta<d$, $\sigma\ge 1$. We write $\Omega \in {\cal G}_{\theta,\sigma}$ if $\Omega$ is given by formula \eqref{omega_def} with $\psi(x') = 2 - ({\rm dist}(x', \, \Gamma))^{1/\sigma}$, where $\Gamma \subset [0, \, 1]^{d-1}$ is an $h$-set with $h(t) = t^\theta$.
\end{Def}

\begin{Rem}
It is easy to check that the function $\psi$ satisfies \eqref{psi_prop} with $\varphi_1(t) = \dots = \varphi_{d-1}(t) = at^\sigma$ for some $a=a(\sigma)>0$. Hence each domain from the class ${\cal G}_{\theta,\sigma}$ belongs to the class ${\cal G}_{\varphi_1, \, \dots, \, \varphi_{d-1}}$.
\end{Rem}

Notice that the magnitude $\alpha_2$ from \eqref{a1a2def} can be written as
$$
\alpha_2 = \frac{r + (1/q-1/p)(\sigma(d-1)+1)}{\sigma(d-1)} + \frac 1p - \frac 1q.
$$

\begin{Trm}
\label{h_set_e_w} Let $0\le \theta <d$, $\sigma \ge 1$, $\Omega \in {\cal G}_{\theta,\sigma}$, and let \eqref{emb_hol_cond} hold. Then the analogue of the assertions of Theorems {\rm \ref{main_entr}}, {\rm \ref{main_widths}} holds, where $\frac{r + (1/q-1/p)(\sigma(d-1)+1)}{\sigma(d-1)}$ is replaced by $\frac{r + (1/q-1/p)(\sigma(d-1)+1)}{\sigma\theta}$.
\end{Trm}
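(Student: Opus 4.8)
The plan is to reduce the statement to Theorems \ref{main_entr} and \ref{main_widths} by computing what the general machinery gives for a domain $\Omega \in {\cal G}_{\theta,\sigma}$, and then upgrading the exponent from $\sigma(d-1)$ to $\sigma\theta$ by exploiting the extra structure of the $h$-set. First I would record, as already noted in the Remark preceding the statement, that $\Omega \in {\cal G}_{\varphi_1,\dots,\varphi_{d-1}}$ with $\varphi_i(t)=at^\sigma$, so $\prod_{i=1}^{d-1}\varphi_i(t) = a^{d-1} t^{\sigma(d-1)}$ and hence $\Lambda \equiv a^{d-1}$ is constant; this makes $\psi_\Lambda$ constant and forces $\varphi_{\sigma(d-1),\psi_\Lambda}(n) \asymp 1$, so that the slowly varying corrections $\tau_2$ collapse and Theorems \ref{main_entr}, \ref{main_widths} give the pure power bounds with exponent $\alpha_2 = \frac{r+1/q-1/p}{\sigma(d-1)}$ (and the corresponding $\theta_3,\theta_4$). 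So the content of the theorem is precisely that, on these special domains, one may replace $\sigma(d-1)$ by $\sigma\theta$ everywhere — a genuine improvement when $\theta<d-1$.

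The heart of the matter is therefore a volumetric estimate: for $\Omega\in{\cal G}_{\theta,\sigma}$ one must show that the part of $\Omega$ lying within height $t$ of its ``floor'' (i.e.\ $\{x\in\Omega:\ 2-x_d \le t\}$, the region where $\psi$ is close to its minimum and the domain degenerates) has Lebesgue measure of order $t^{\sigma\theta/\sigma}\cdot(\text{something})$ — more precisely, I would show that the $(d-1)$-volume of $\{x'\in[0,1]^{d-1}:\ {\rm dist}(x',\Gamma)\le s\}$ is $\asymp s^{(d-1)-\theta}\cdot s^{\theta}$-type only near $\Gamma$, but the key point is the number of dyadic cubes of side $\asymp\varphi_i^{-1}(\cdot)$ meeting a neighborhood of $\Gamma$, which by the $h$-set property \eqref{h_set_def} with $h(t)=t^\theta$ is of order $s^{-\theta}$ rather than $s^{-(d-1)}$. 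I would then re-run the partition/piecewise-polynomial approximation argument underlying Theorem \ref{main_entr} (and the discretization feeding Theorem \ref{main_widths}), but counting cubes near the singular set by this sharper $h$-set bound; the upshot is that the ``bad'' scale contributes with exponent governed by $\sigma\theta$ instead of $\sigma(d-1)$, while the ``good'' interior part is unchanged and still produces the exponent $\alpha_1=r/d$. Taking the minimum as in the hypotheses $\alpha_1\ne\alpha_2$, $\theta_{j_*}<\min_{j\ne j_*}\theta_j$ then yields \eqref{e_n_est} and \eqref{poper_est_main} with the claimed substitution.

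Concretely, the steps in order: (i) verify $\Lambda$ is constant and specialize the formulas of Theorems \ref{main_entr}, \ref{main_widths}; (ii) establish the covering estimate ``the number of dyadic cubes of side $2^{-k}$ in $[0,1]^{d-1}$ at distance $\le 2^{-k}$ from $\Gamma$ is $\underset{\mathfrak{Z}_*}{\asymp}2^{k\theta}$'' from \eqref{h_set_def}; (iii) build the adapted partition of $\Omega$ into blocks on which one applies the local Sobolev (Poincaré-type / polynomial approximation) estimate on a cube, with block dimensions tuned so that the images under the embedding are balanced finite-dimensional balls; (iv) sum the local entropy/width contributions using (ii) in place of the crude $(d-1)$-dimensional count, and optimize; (v) translate blocks to cover an arbitrary $\Omega\in{\cal G}_{\theta,\sigma}$ via the finitely many maps $T_j$ and note affine invariance of all quantities involved.

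The main obstacle I expect is step (iii)–(iv): one must choose the anisotropic block sizes near $\Gamma$ so that, simultaneously, the Sobolev approximation error on each block is uniformly controlled and the number of blocks at each dyadic height is governed by the $h$-set exponent $\theta$; balancing these against each other (and correctly handling the boundary-layer blocks where $\psi$ is not bounded below by a constant) is exactly where the improvement from $d-1$ to $\theta$ is won or lost, and it requires care in the case analysis $p=q$ versus $p<q$, $\hat q\le 2$ versus $\hat q>2$, so that the four exponents $\theta_1,\dots,\theta_4$ come out with $\sigma(d-1)$ correctly replaced by $\sigma\theta$ in $\theta_3,\theta_4$ while $\theta_1,\theta_2$ (which come from the regular part) stay put.
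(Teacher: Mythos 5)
Your plan coincides with the paper's own route: the paper also covers $[0,1]^{d-1}$ by cubes of side $\asymp\varphi_0(2^{-k})=2^{-k\sigma}$, splits them into those within distance $\asymp 2^{-n_k}$ of $\Gamma$ (counted $\lesssim 1/h(2^{-n_k})\asymp 2^{k\sigma\theta}$ via the $h$-set measure, exactly your step (ii)) and the remaining ones, whose full columns up to the graph of $\psi$ become regular Lipschitz blocks, and then re-verifies Assumptions \ref{sup1}--\ref{sup3} with $\gamma_*=\sigma\theta$, $\lambda_*=r+(\sigma(d-1)+1)(1/q-1/p)$, $\delta_*=\frac rd+\frac 1q-\frac 1p$, $u_*\equiv\psi_*\equiv 1$ before applying Theorems \ref{trm1}, \ref{trm2}. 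The balancing you flag in steps (iii)--(iv) is handled there by a Lipschitz estimate for $\psi$ on cubes away from $\Gamma$ and a modified chain construction yielding the analogue of Lemmas \ref{emb_est1}, \ref{lem2est} for the new blocks, so your proposal is essentially the paper's argument in outline.
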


So, this increases the absolute value of the exponential in the upper estimate 
for a function  $\psi$ of special kind.

We will also show that if $\Gamma$ is the set constructed in \cite{bricchi}, then under conditions of Theorem \ref{h_set_e_w} the similar lower estimate for the entropy numbers and widths holds. Since $\theta$ can be taken arbitrarily close to $d$, the upper estimates in Theorems \ref{main_entr}, \ref{main_widths} cannot be improved (i.e., we cannot replace the magnitude $\frac{r + (1/q-1/p)(\sigma(d-1)+1)}{\sigma(d-1)}$ by a greater one so that the assertions of these theorems hold on the whole class of the domains $\Omega \in {\cal G}'_{\varphi_1, \, \dots, \, \varphi_{d-1}}$ with $\varphi_i(t) = t^\sigma$, $i=1, \, \dots, \, d-1$).

\begin{Rem}
If $h(t) = t^\theta R(t)$, $\psi(x') = 2 - ({\rm dist}(x', \, \Gamma))^{1/\sigma}W({\rm dist}(x', \, \Gamma))$, where $R$, $W$ are locally absolutely continuous functions on $(0, \, 1]$ and $\lim \limits _{t\to +0} t\frac{R'(t)}{R(t)} = \lim \limits _{t\to +0} t\frac{W'(t)}{W(t)}=0$, and the function $\varphi_0$ inverse to $\psi_0(t):=t^{1/\sigma}W(t)$ in a right neighborhood of 0 satisfies the inequality $\varphi_0(t) \le a_*t$, it is also possible to obtain the estimates for the entropy numbers and widths (see Theorems \ref{trm1}, \ref{trm2} in \S 2), but the formulas would be rather complicated. So we consider the more simple case, when $R(\cdot)\equiv 1$ and $W(\cdot) \equiv 1$.
\end{Rem}

\begin{Exa}
\label{exa1}
Let $\theta\in \{1, \, \dots, \, d-2\}$,
$$
\Gamma = \{(x_1, \, \dots, \, x_\theta, \, 1/2, \, \dots, \, 1/2):\; 0\le x_i\le 1, \, i=1, \, \dots, \, \theta\}.
$$
It will be shown that under the conditions of Theorem \ref{h_set_e_w} in the upper estimates from Theorems \ref{main_entr}, \ref{main_widths} the magnitude $\frac{r + (1/q-1/p)(\sigma(d-1)+1)}{\sigma(d-1)}$ can be replaced by $$\frac{r + (1/q-1/p)(\sigma(d-\theta-1)+\theta+1)}{\theta}.$$ Since $$\frac{r + (1/q-1/p)(\sigma(d-\theta-1)+\theta+1)}{\theta} > \frac{r + (1/q-1/p)(\sigma(d-1)+1)}{\sigma\theta},$$
we obtain an example of an $h$-set (the plane parallel to the coordinate one), for which the estimates in Theorem \ref{h_set_e_w} can be improved. Thus, the estimates of the widths and the entropy numbers depend not only on the dimension of the $h$-set of singularities of the function $\psi$, but also depend on its structure.
\end{Exa}

\section{Auxilliary assertions}

We formulate theorems about upper estimates of the widths and the entropy numbers of embeddings of function classes on sets with tree-like structure (see \cite{vas_entr, vas_width}). Here for brevity we consider only Sobolev and Lebesgue spaces on a domain in $\R^d$.

Let $({\cal T}, \, \xi_0)$ be a tree with a distinguished vertex $\xi_0$. Then the partial order and a distance $\rho$ on its vertex set ${\bf V}({\cal T})$ can be defined naturally (here $\xi_0$ is the minimal vertex of the tree). Given $\xi \in {\bf V}({\cal T})$, $j\in \Z_+$, we set
$$
{\bf V}_j(\xi) = \{\eta\ge \xi:\; \rho(\xi, \, \eta) = j\}
$$
and denote by ${\cal T}_\xi$ the subtree in ${\cal T}$ with the vertex set $\{\eta \in {\bf V}({\cal T}):\; \eta \ge \xi\}$.

The Lebesgue measure of a measurable subset $E\subset \R^d$ will be denoted by ${\rm mes}\, E$ or $|E|$.

Let $m\in \N$ or $m=\infty$, $E\subset \R^d$, $E_j\subset \R^d$ $(j=1, \, 2, \, \dots, \, m)$ be measurable subsets. We say that $\{E_j\}_{j=1}^m$ is a partition of $E$ if ${\rm mes}(E_i\cap E_j)=0$ for $i\ne j$ and ${\rm mes}\Bigl(E \bigtriangleup \Bigl(\cup _{j=1} ^m E_j\Bigr)\Bigr)=0$.

Let $\Omega \subset \R^d$ be a bounded domain, let $\hat \Theta$ be at most countable partition of $\Omega$ into subdomains, let $({\cal A}, \, \xi_*)$ be a tree such that
\begin{align}
\label{vert_c} \exists c_1\ge 1:\quad {\rm card}\, {\bf V}_1(\xi)\le c_1, \quad \xi \in {\bf V}({\cal A}),
\end{align}
and let $\hat F:{\bf V}({\cal A}) \rightarrow \hat \Theta$ be a bijection.

Denote by ${\cal P}_{r-1}(\Omega)$ the space of polynomials on $\Omega$ of degree at most $r-1$. Given a measurable subset $E\subset \Omega$, we set ${\cal P}_{r-1}(E) = \{f|_{E}:\; f\in {\cal P}_{r-1}(\Omega)\}$.

For each subtree ${\cal A}'\subset {\cal A}$ we denote
\begin{align}
\label{omacup} \Omega^*_{{\cal A}'}=\cup _{\xi\in {\bf V}({\cal
A}')} \hat F(\xi).
\end{align}

\begin{Sup}
\label{sup1} There is a function $w_*:{\bf V}({\cal A})\rightarrow (0, \, \infty)$ with the following property: for each vertex $\hat\xi\in {\bf V}({\cal A})$ there is a linear continuous projection
$P_{\hat\xi}:L_q(\Omega)\rightarrow {\cal P}_{r-1}(\Omega)$ such that for any function $f\in {\cal W}^r_p(\Omega)$ and each subtree ${\cal
A}'\subset {\cal A}$ with minimal vertex $\hat\xi$
\begin{align}
\label{f_pom_f} \|f-P_{\hat\xi}f\|_{L_q(\Omega^*_{{\cal A}'})}\le
w_*(\hat\xi)\|\nabla^r f\| _{L_p(\Omega^*_{{\cal A}'})}.
\end{align}
\end{Sup}

\begin{Sup}
\label{sup2} There are numbers $\delta_*>0$, $c_2\ge 1$
such that for each vertex $\xi\in {\bf V}({\cal A})$ and each
$n\in \N$, $m\in \Z_+$ there is a partition $T_{m,n}(G)$ of the set $G=\hat F(\xi)$ with the following properties:
\begin{enumerate}
\item ${\rm card}\, T_{m,n}(G)\le c_2\cdot 2^mn$.
\item For each $E\in T_{m,n}(G)$ there is a linear continuous
operator $P_E:L_q(\Omega)\rightarrow {\cal P}_{r-1}(E)$ such that for any function $f\in {\cal W}^r_p(\Omega)$
$$
\|f-P_Ef\|_{L_q(E)}\le (2^mn)^{-\delta_*}w_*(\xi) \|\nabla^r f\|
_{L_p(E)}.
$$
\item For each $E\in T_{m,n}(G)$
$$
{\rm card}\,\{E'\in T_{m\pm 1,n}(G):\, {\rm mes}(E\cap E') >0\}
\le c_2.
$$
\end{enumerate}
\end{Sup}

\begin{Sup}
\label{sup3} There are numbers $\lambda_*\ge 0$,
$\gamma_*>0$, locally absolutely continuous functions $u_*:(0, \, \infty)
\rightarrow (0, \, \infty)$ and $\psi_*:(0, \, \infty) \rightarrow
(0, \, \infty)$, numbers $c_3\ge 1$, $t_0\in \Z_+$ and a partition $\{{\cal A}_{t,i}\}_{t\ge t_0, \, i\in \hat J_t}$ of the tree ${\cal A}$ such that $\lim \limits _{y\to \infty}
\frac{yu_*'(y)}{u_*(y)}=0$, $\lim \limits _{y\to \infty}
\frac{y\psi_*'(y)}{\psi_*(y)}=0$,
\begin{align}
\label{w_s_2} c_3^{-1} 2^{-\lambda_*t}u_*(2^{t}) \le
w_*(\xi)\le c_3\cdot 2^{-\lambda_*t}u_*(2^{t}), \quad \xi
\in {\bf V}({\cal A}_{t,i}),
\end{align}
\begin{align}
\label{nu_t_k} \sum \limits _{i\in \hat J_t} {\rm
card}\, {\bf V}({\cal A}_{t,i})\le c_3\cdot 2^{\gamma_*t}
\psi_*(2^{t}), \quad t\ge t_0.
\end{align}

In addition, we suppose that the following properties hold.
\begin{enumerate}

\item Let $t$, $t'\in \Z_+$. Then
\begin{align}
\label{2l} 2^{-\lambda_*t'}u_*(2^{t'})\le c_3\cdot
2^{-\lambda_*t}u_*(2^{t}) \quad\text{for}\quad  t'\ge t.
\end{align}

\item If the tree ${\cal A}_{t',i'}$ follows the tree ${\cal A}_{t,i}$ (i.e., the minimal vertex of ${\cal A}_{t',i'}$ follows some vertex of ${\cal A}_{t,i}$), then $t'=t+1$.

\item ${\rm card}\{i':\; {\cal A}_{t',i'} \text{ follows the tree } {\cal A}_{t,i}\} \le c_3$.
\end{enumerate}
\end{Sup}

Let $$\hat {\cal W}^r_p(\Omega) =\{f - P_{\xi_0}f:\, f\in {\cal W}^r_p(\Omega)\}.$$

We write $\mathfrak{Z}_0=(p, \, q, \, r, \, d, \, \delta_*, \, \lambda_*, \, \gamma_*, \, u_*, \, \psi_*, \, c_1, \, c_2, \, c_3)$.

The following theorem was proved in \cite{vas_entr} for $p>1$, $q<\infty$, but these conditions were not applied in the proof. Thus, we can formulate this theorem for $1\le p\le q\le \infty$.

\begin{trma}
\label{trm1} {\rm (see \cite{vas_entr}).} Let $1\le p\le q\le \infty$, and let Assumptions {\rm \ref{sup1}}, {\rm \ref{sup2}}, {\rm \ref{sup3}} hold. Let $\delta_*>0$, $\delta_*\ne \lambda_*/\gamma_*$. We set
$$
\sigma_*(n)=\left\{ \begin{array}{l} 1 \quad \text{for}\quad
\delta_*<\lambda_*/\gamma_*, \\ u_*(n^{1/\gamma_*} \varphi_{\gamma_*,\psi_*}(n))
\varphi_{\gamma_*,\psi_*}^{-\lambda_*}(n) \quad \text{for}\quad \delta_* >
\lambda_*/\gamma_*.\end{array}\right.
$$
Then
$$
e_n({\rm Id}:\hat {\cal W}^r_p(\Omega)\rightarrow L_q(\Omega))
\underset{\mathfrak{Z}_0} {\lesssim} n^{-\min(\delta_*, \,
\lambda_*/\gamma_*)+\frac 1q-\frac 1p}\sigma_*(n).
$$
\end{trma}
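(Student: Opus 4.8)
The plan is to follow the three--step scheme standard for entropy numbers on tree--like structures: (a) build a finite--dimensional space ${\cal S}$ of piecewise polynomials adapted to ${\cal A}$, with control of $\dim{\cal S}$ and of the error of approximating $f\in\hat{\cal W}^r_p(\Omega)$ by an element of ${\cal S}$; (b) reinterpret the image of the unit ball of $\hat{\cal W}^r_p(\Omega)$ inside ${\cal S}$, via a telescoping argument along ${\cal A}$, as (a bounded perturbation of) a direct sum of scaled finite--dimensional $\ell_p$--balls, one block per level $t$; (c) estimate the entropy numbers of this sequence--space model by Sch\"utt's theorem \cite{schutt} and optimise the free parameters. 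Throughout, the hypotheses $\frac{yu_*'(y)}{u_*(y)}\to0$ and $\frac{y\psi_*'(y)}{\psi_*(y)}\to0$ are used to treat $u_*,\psi_*$ as slowly varying, so that every sum over the level $t$ behaves, up to such factors, like a geometric series; the doubling of $u_*,\psi_*$, the uniform bounds \eqref{vert_c}, \eqref{2l}, \eqref{nu_t_k}, Assumption~\ref{sup3}(1)--(3) and the ``consecutive level'' and finite--overlap properties of Assumption~\ref{sup2} keep all implied constants dependent only on $\mathfrak{Z}_0$.

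For step (a) I fix $n$, set $s\asymp n$, and let $T=T(s)$ be the cut--off level given by $2^{\gamma_*T}\psi_*(2^T)\asymp s$, so that $2^T\asymp s^{1/\gamma_*}\varphi_{\gamma_*,\psi_*}(s)$ by the Lemma from \cite{vas_width} recalled above. For a vertex $\xi\in{\bf V}({\cal A}_{t,i})$ with $t\le T$ I refine the cell $\hat F(\xi)$ by the partition $T_{m_t,n_1}(\hat F(\xi))$ of Assumption~\ref{sup2}, with $n_1$ an absolute constant and $m_t\in\Z_+$ a level--depth to be chosen; for $t>T$ I keep $\hat F(\xi)$ unrefined and, more economically, replace $f$ on the whole tail subtree below a level--$(T+1)$ vertex $\eta$ by $P_\eta f$, using \eqref{f_pom_f}. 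Let ${\cal S}$ be the span of the spaces ${\cal P}_{r-1}(E)$ over all resulting cells $E$; by Assumptions~\ref{sup2}(1), \ref{sup3} and \eqref{nu_t_k},
$$
\dim{\cal S}\ \lesssim\ \sum_{t\le T}2^{m_t}2^{\gamma_*t}\psi_*(2^t)\ +\ \sum_{t>T}2^{\gamma_*t}\psi_*(2^t),
$$
and the tail sum is $\asymp 2^{\gamma_*T}\psi_*(2^T)\asymp s$ since the series is geometric up to a slowly varying factor; I choose the $m_t$ so that the first sum is also $\lesssim s\asymp n$. The cellwise approximation error is, by Assumptions~\ref{sup1}, \ref{sup2} and \eqref{w_s_2}, at most $\varepsilon_t\,\|\nabla^r f\|_{L_p(E)}$ with $\varepsilon_t=(2^{m_t}n_1)^{-\delta_*}2^{-\lambda_*t}u_*(2^t)$ for $t\le T$, and at most $2^{-\lambda_*T}u_*(2^T)\,\|\nabla^r f\|_{L_p(\cdot)}$ on a tail piece by \eqref{2l}; summing the $L_q$--errors over the cell partition of $\Omega$ (using $q\ge p$ and $\|\nabla^r f\|_{L_p(\Omega)}\le1$) bounds the total approximation error by a quantity governed by $\sup_t\varepsilon_t$ and the tail weight.

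For steps (b)--(c) I identify a polynomial on a cell $E$ with its coefficient vector, normalised so that its $L_p(E)$-- and $L_q(E)$--norms are comparable to $|E|^{1/p}$, resp.\ $|E|^{1/q}$, times the coefficient norm. A telescoping argument along ${\cal A}$ --- using that the $P_{\hat\xi}$ and $P_E$ are bounded projections, the nestedness of the cells, and Assumption~\ref{sup2}(3) --- represents the image of the $\hat{\cal W}^r_p(\Omega)$--ball inside ${\cal S}$ as a bounded perturbation of $\bigoplus_t K_t$, where $K_t$ is a rescaling (by the level weight $2^{-\lambda_*t}u_*(2^t)$, the refinement factor $2^{-m_t\delta_*}$ and the cell measures) of the unit $\ell_p$--ball of dimension $N_t\asymp 2^{m_t}2^{\gamma_*t}\psi_*(2^t)$ sitting inside $\ell_q^{N_t}$, while the constraint $\|\nabla^r f\|_{L_p(\Omega)}\le1$ couples the blocks additively in the $p$--th power. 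Applying Sch\"utt's theorem blockwise and reassembling with the same $\ell_p\hookrightarrow\ell_q$ ($p\le q$) bookkeeping as in step (a), while distributing the $n$ entropy bits across the levels $t\le T$, yields $e_n\lesssim n^{-\min(\delta_*,\lambda_*/\gamma_*)+1/q-1/p}\sigma_*(n)$ for the model; adding the step--(a) approximation error, which after the choice of $m_t$ is dominated by the same expression, gives the assertion. The exponent shift $+1/q-1/p$ is produced exactly here, by the entropy asymptotics of ${\rm id}:\ell_p^{N}\to\ell_q^{N}$ for $p<q$; for $p=q$ it disappears, consistently with the statement.

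The main obstacle is the optimisation over $(m_t)_{t\le T}$ and the resulting dichotomy. One balances the refinement cost $2^{m_t}2^{\gamma_*t}\psi_*(2^t)$ against its gain $2^{-m_t\delta_*}2^{-\lambda_*t}u_*(2^t)$ under the global budget $\sum_t 2^{m_t}2^{\gamma_*t}\psi_*(2^t)\lesssim n$. If $\delta_*<\lambda_*/\gamma_*$ the refinement is ``cheap'' and may be concentrated at small $t$: the relevant series converge at their top, no $u_*$--factor survives, and $\sigma_*\equiv1$ with rate $\delta_*$. If $\delta_*>\lambda_*/\gamma_*$ the decisive contribution comes from the cut--off scale $t\approx T(n)$, where the surviving factor is $w_*$ evaluated there, namely
$$
2^{-\lambda_*T}u_*(2^T)\ \asymp\ n^{-\lambda_*/\gamma_*}\varphi_{\gamma_*,\psi_*}^{-\lambda_*}(n)\,u_*\bigl(n^{1/\gamma_*}\varphi_{\gamma_*,\psi_*}(n)\bigr)\ =\ n^{-\lambda_*/\gamma_*}\sigma_*(n).
$$
The hypothesis $\delta_*\ne\lambda_*/\gamma_*$ is precisely what makes the governing geometric ratio bounded away from $1$, so that a single dominant scale is singled out (in the borderline case an extra logarithmic factor would appear, which is why it is excluded); this reproduces the argument of \cite{vas_entr}.
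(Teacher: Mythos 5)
There is an important mismatch of expectations here: the paper itself contains no proof of this statement. It is Theorem~\ref{trm1}, imported verbatim from \cite{vas_entr}; the only contribution of the present paper is the remark that the restrictions $p>1$, $q<\infty$ imposed in \cite{vas_entr} were never used in that proof, so the formulation extends to $1\le p\le q\le\infty$. So there is no internal argument to compare you against, and your text should be judged as a reconstruction of the proof in \cite{vas_entr}. At the level of architecture your reconstruction is faithful: tree-adapted piecewise-polynomial approximation with a cut-off level $T$ defined by $2^{\gamma_*T}\psi_*(2^T)\asymp n$, refinement depths $m_t$ per level via Assumption~\ref{sup2}, tail treated through \eqref{f_pom_f} and \eqref{2l}, discretization to weighted finite-dimensional blocks, Sch\"utt's theorem \cite{schutt}, and an optimisation whose dichotomy in $\delta_*$ versus $\lambda_*/\gamma_*$ produces $\sigma_*$; this is indeed the strategy of \cite{vas_entr}, and your bookkeeping of where the factor $n^{1/q-1/p}$ and the factor $2^{-\lambda_*T}u_*(2^T)\asymp n^{-\lambda_*/\gamma_*}\sigma_*(n)$ come from is correct.

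However, as a proof your text has genuine gaps exactly where the content of \cite{vas_entr} lies. First, step (b) is asserted, not proved: the claim that the image of the $\hat{\cal W}^r_p$-ball inside ${\cal S}$ is ``a bounded perturbation of $\bigoplus_t K_t$'' with blocks coupled additively in the $p$-th power is the nontrivial discretization theorem. The cross-level interactions are not removed by a simple telescoping; they are controlled by the norm of a weighted summation operator along the tree (this is precisely why the machinery of $\mathfrak{S}^{p,q}_{{\cal T},g,v}$ with conditions of the type \eqref{v_w_cond}--\eqref{sum_est_0} appears in this circle of papers), and it is here that \eqref{w_s_2}, \eqref{2l}, \eqref{nu_t_k} and properties 2--3 of Assumption~\ref{sup3} must actually be used, not merely listed. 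Second, in step (c) ``distributing the $n$ entropy bits across the levels'' needs an explicit allocation $n_t$ with $\sum_t n_t\lesssim n$ and a summation of the resulting blockwise Sch\"utt bounds; since there are $\asymp\log n$ levels, a uniform allocation loses logarithmic factors, and one must choose geometrically varying $n_t$ and $m_t$ and exploit $\delta_*\ne\lambda_*/\gamma_*$ to sum the resulting series without picking up extra powers of slowly varying functions; you state the dichotomy qualitatively but give neither the allocation nor the summation, so the asserted final bound, with $\sigma_*(n)$ appearing exactly once and no parasitic factors, is not established. Until these two steps are carried out quantitatively, the text is a correct plan of the proof of \cite{vas_entr} rather than a proof.
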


The following result was proved in \cite{vas_width}.
\begin{trma}
\label{trm2} {\rm (see \cite{vas_width}).}
Suppose that the conditions of Theorem {\rm \ref{trm1}} hold, $p>1$, $q<\infty$.
\begin{enumerate}
\item Let $\hat q\le 2$ or $p=q$. Let the function $\sigma_*$ be defined as in Theorem {\rm \ref{trm1}}. Then
$$
\vartheta_n(W^r_p(\Omega), \, L_q(\Omega)) \underset{\mathfrak{Z}_0} {\lesssim} n^{-\min(\delta_*, \,
\lambda_*/\gamma_*)}\sigma_*(n).
$$

\item Let $\hat q>2$, $p<q$. We set
$$
\theta_1 = \delta_* + \min \{1/p-1/q, \, 1/2-1/\hat q\}, \quad \theta_2 = \frac{\hat q \delta_*}{2},
$$
$$
\theta_3 = \lambda_*/\gamma_* + \min \{1/p-1/q, \, 1/2-1/\hat q\}, \quad \theta_4 = \frac{\hat q \lambda_*}{2\gamma_*},
$$
$\tau_1(n)=\tau_2(n)=1$, $$\tau_3(n) = u_*(n^{1/\gamma_*} \varphi_*(n))
\varphi_*^{-\lambda_*}(n),$$ $$\tau_4(n) = u_*(n^{\hat q/2\gamma_*} \varphi_*(n^{\hat q/2}))
\varphi_*^{-\lambda_*}(n^{\hat q/2}).$$ Suppose that there exists $j_*\in \{1, \, 2, \, 3, \, 4\}$ such that $\theta_{j_*} < \min _{j\ne j_*}\theta_j$. Then
$$
\vartheta_n(W^r_p(\Omega), \, L_q(\Omega)) \underset{\mathfrak{Z}_0} {\lesssim} n^{-\theta_{j_*}}\tau _{j_*}(n).
$$

\end{enumerate}
\end{trma}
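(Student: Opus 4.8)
The plan is to deduce the width estimates from the same discretisation that underlies the entropy bound of Theorem \ref{trm1}, replacing the Sch\"utt-type entropy estimates for finite-dimensional balls by the corresponding Kolmogorov, linear and Gelfand width estimates for the embeddings $B_p^N \hookrightarrow \ell_q^N$. Since $\mathcal{P}_{r-1}(\Omega)$ has fixed dimension, I would first include it in every approximating object at the cost of a bounded shift of the index $n$, thereby replacing $W^r_p(\Omega)$ by $\hat{\mathcal{W}}^r_p(\Omega) = \{f - P_{\xi_0}f : f \in \mathcal{W}^r_p(\Omega)\}$. Using Assumption \ref{sup1} I would then write $f - P_{\xi_0}f$ as a telescoping sum of increments $P_\eta f - P_\xi f$ along the edges of the tree $\mathcal{A}$; by \eqref{f_pom_f} the $L_q$-error of truncating this sum over any subtree is controlled by the weight $w_*$ times the local $L_p$-norm of $\nabla^r f$. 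As each vertex carries only $\dim\mathcal{P}_{r-1}(\Omega) = O(1)$ polynomial coefficients, the class maps, up to absolute constants, into a weighted direct sum of finite-dimensional $\ell_p$-balls.

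Next I would group the tree into the level blocks $\{\mathcal{A}_{t,i}\}$ supplied by Assumption \ref{sup3}. On block level $t$ there are, by \eqref{nu_t_k}, at most $N_t \lesssim 2^{\gamma_* t}\psi_*(2^t)$ vertices, and by \eqref{w_s_2} each carries the common weight $w_*(\xi) \asymp 2^{-\lambda_* t}u_*(2^t)$. Thus the restriction of the class to level $t$ is, after normalisation, comparable to $2^{-\lambda_* t}u_*(2^t)\, B_p^{N_t}$ measured in $\ell_q^{N_t}$, and the full class is the $\ell_p$-sum over $t$ of these blocks. The within-subdomain refinement of Assumption \ref{sup2} enters to produce the approximation exponent $\delta_*$: spending $2^m n$ pieces inside a subdomain improves the local error by the factor $(2^m n)^{-\delta_*}$, which is what lets one trade dimension for accuracy inside a single level and is the source of the parameter $\delta_*$ competing with $\lambda_*/\gamma_*$.

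The core is then the finite-dimensional input. For an assigned dimension budget $n = \sum_t n_t$ I would insert the sharp estimates for $d_{n_t}(B_p^{N_t}, \ell_q^{N_t})$, $\lambda_{n_t}$ and $d^{n_t}$; by the duality $d^{n}(B_p^N, \ell_q^N) = d_{n}(B_{q'}^N, \ell_{p'}^N)$ together with the relation between linear and Kolmogorov widths, all three are captured by the single parameter $\hat q \in \{q, \min(q,p'), p'\}$. When $\hat q \le 2$ (or $p = q$) the finite-dimensional width obeys one clean power law in $n_t/N_t$, and optimising the allocation of $n$ over the levels $t$, balancing the geometric factors $2^{-\lambda_* t}$ against the level sizes $2^{\gamma_* t}$, yields the dominant rate $n^{-\min(\delta_*, \lambda_*/\gamma_*)}$ with the slowly-varying correction $\sigma_*$ assembled through the function $\varphi_{\gamma_*,\psi_*}$, exactly as in the entropy case. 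When $\hat q > 2$ and $p < q$ the Kashin--Gluskin--Garnaev--Gluskin estimate for $B_p^N \hookrightarrow \ell_q^N$ splits into two regimes, according to whether $n_t$ is small or comparable to $N_t$; these correspond respectively to the $\hat q/2$-type and the $\min$-type exponents, and crossing this dichotomy with the $\delta_*$-versus-$\lambda_*/\gamma_*$ dichotomy of the level sum produces exactly the four candidate exponents $\theta_1, \theta_2, \theta_3, \theta_4$ (the pairs $\{\theta_1,\theta_2\}$ and $\{\theta_3,\theta_4\}$ carrying $\delta_*$ and $\lambda_*/\gamma_*$ respectively). The claimed bound is then governed by the strictly smallest $\theta_{j_*}$.

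The main obstacle will be the optimisation over the two-parameter family of refinements, the level index $t$ and the within-level subdivision parameter $m$ from Assumption \ref{sup2}, carried out uniformly in the slowly-varying factors $u_*$ and $\psi_*$. Concretely, one must show that the optimal allocation concentrates the budget near a critical level $t_* \asymp \gamma_*^{-1}\log_2 n$ and that the tails over $t \ll t_*$ and $t \gg t_*$ are summable, with the logarithmic-type factors reorganised into $\varphi_{\gamma_*,\psi_*}(n)$ via \cite[Lemma 2]{vas_width}. In the case $\hat q > 2$ the extra difficulty is that the two regimes of the finite-dimensional width are optimal on different ranges of $t$, so verifying that $\theta_{j_*}< \min_{j\ne j_*}\theta_j$ really selects the global minimum requires comparing all four partial sums rather than a single one.
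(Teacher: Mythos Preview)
The paper does not prove Theorem~\ref{trm2}; it is quoted verbatim from \cite{vas_width} as an auxiliary result and simply invoked in \S4 and \S5. There is therefore no proof in this paper to compare your proposal against.

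That said, your outline matches the strategy actually carried out in \cite{vas_width}: the same tree discretisation as in the entropy case, with Sch\"utt's entropy bounds replaced by the Gluskin--Kashin--Garnaev--Gluskin estimates for $\vartheta_n(B_p^N,\ell_q^N)$ (unified through the parameter $\hat q$), followed by an allocation of the dimension budget across the level blocks $\{\mathcal{A}_{t,i}\}$. Your identification of the four exponents $\theta_1,\dots,\theta_4$ as arising from the product of the two dichotomies (the $\delta_*$ vs.\ $\lambda_*/\gamma_*$ competition and the two regimes of the finite-dimensional width) is correct, and the main technical work you anticipate---summability of the tails and absorption of the slowly-varying factors into $\varphi_{\gamma_*,\psi_*}$---is exactly what \cite{vas_width} does. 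For the detailed execution you should consult that reference directly.
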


Let $({\cal T}, \xi_0)$ be a tree, and let $g$, $v:{\bf V}({\cal
T}) \rightarrow \R_+$. We define the weighted summation operator $S_{g,v,{\cal T}}$ by formula
$$
S_{g,v,{\cal T}}f(\xi) = v(\xi)\sum \limits _{\xi'\le \xi}
g(\xi')f(\xi'), \quad \xi \in {\bf V}({\cal T}), \quad f:{\bf
V}({\cal T}) \rightarrow \R.
$$
Let $1\le p\le q\le \infty$. We denote by
$\mathfrak{S}^{p,q}_{{\cal T},g,v}$ the minimal constant $C$ in the inequality
$$
\left(\sum \limits_{\xi \in {\bf V}({\cal T})} |S_{g,v,{\cal T}}f(\xi)|^q\right)^{1/q}
\le C\left(\sum \limits_{\xi \in {\bf V}({\cal T})}
|f(\xi)|^p\right)^{1/p}, \;\; f:{\bf V}({\cal T})\rightarrow \R
$$
(naturally modified for $p=\infty$ or $q=\infty$).

The sharp two-sided estimates for norms of such operators were obtained in \cite{a_s}. If $q<\infty$ and there are numbers $a>0$, $b>0$ such that, for any $\xi \in {\bf V}({\cal T})$, $j\in \Z_+$,
\begin{align}
\label{v_w_cond} \sum \limits _{\eta \in {\bf V}_j(\xi)} v^q(\eta) \le b\cdot 2^{-aj} v^q(\xi),
\end{align}
it is easy to check that
\begin{align}
\label{sum_est_0} \mathfrak{S}^{p,q}_{{\cal T},g,v} \underset{a,b}{\lesssim} \sup _{\xi \in {\bf V}({\cal T})} g(\xi)v(\xi).
\end{align}
Indeed, let $g_1(\xi) = 2^{\varepsilon l}$, $g_2(\xi) = 2^{-\varepsilon l}g(\xi)$, $\xi\in {\bf V}_l(\xi_0)$. Then, if $\varepsilon = \varepsilon(a)>0$ is sufficiently small, we have
$$
\Bigl( \sum \limits _{\xi \in {\bf V}({\cal T})}v^q(\xi) \Bigl|\sum \limits _{\xi_0\le \eta \le \xi} g(\eta) f(\eta)\Bigr|^q\Bigr)^{1/q}\le
$$ 
$$
\le \Bigl( \sum \limits _{\xi \in {\bf V}({\cal T})}v^q(\xi) \Bigl(\sum \limits _{\xi_0\le \eta \le \xi} g_1^{q'}(\eta)\Bigr)^{q/q'}\sum \limits _{\xi_0\le \zeta \le \xi} g_2^q(\zeta) f^q(\zeta)\Bigr)^{1/q} \underset{\varepsilon}{\lesssim}
$$
$$
\lesssim \Bigl( \sum \limits _{\xi \in {\bf V}({\cal T})}v^q(\xi) g_1^q(\xi)\sum \limits _{\xi_0\le \zeta \le \xi} g_2^q(\zeta) f^q(\zeta)\Bigr)^{1/q}=
$$
$$
= \Bigl( \sum \limits _{\zeta \in {\bf V}({\cal T})}g_2^q(\zeta) f^q(\zeta) \sum \limits _{\xi\ge \zeta}v^q(\xi) g_1^q(\xi)\Bigr)^{1/q} \stackrel{\eqref{v_w_cond}}{\underset{a, \, b, \, \varepsilon}{\lesssim}}
$$
$$
\lesssim \Bigl( \sum \limits _{\zeta \in {\bf V}({\cal T})}g^q(\zeta) f^q(\zeta) v^q(\zeta)\Bigr)^{1/q} \le \sup _{\zeta \in {\bf V}({\cal T})} g(\zeta) v(\zeta) \Bigl( \sum \limits _{\zeta \in {\bf V}({\cal T})}f^p(\zeta)\Bigr)^{1/p}
$$
(in the case $q=1$ the value $\Bigl(\sum \limits _{\xi_0\le \eta \le \xi} g_1^{q'}(\eta)\Bigr)^{1/q'}$ is replaced by $\max _{\xi_0\le \eta \le \xi} g_1(\eta)$).

The following assertion was proved in \cite[formula (60)]{vas_besov}.

\begin{Lem}
\label{slow_gr} Let $\rho:[1, \, \infty) \rightarrow (0, \, \infty)$ be a locally absolutely continuous function, $\lim \limits _{t\to +\infty} \frac{t\rho'(t)}{\rho(t)}=0$. Then for all $\varepsilon>0$, $t\ge 1$, $y\ge 1$, we have $t^{-\varepsilon} \underset{\varepsilon, \rho}{\lesssim} \frac{\rho(ty)}{\rho(y)} \underset{\varepsilon, \rho}{\lesssim} t^\varepsilon$.
\end{Lem}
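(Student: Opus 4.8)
The plan is to reduce the statement to a one-line estimate for $\log\rho$ on the region where the argument is large, and to dispose of the remaining bounded region by compactness.

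First I would fix $\varepsilon>0$ and use the hypothesis $\lim_{s\to+\infty} s\rho'(s)/\rho(s)=0$ to pick $T=T(\varepsilon,\rho)\ge 1$ with $|\rho'(s)/\rho(s)|\le \varepsilon/s$ for almost every $s\ge T$. Since $\rho$ is positive and locally absolutely continuous, $\log\rho$ is locally absolutely continuous with $(\log\rho)'=\rho'/\rho$ a.e., so for $T\le s_1\le s_2$ one gets $\bigl|\log(\rho(s_2)/\rho(s_1))\bigr|=\bigl|\int_{s_1}^{s_2}\rho'(s)/\rho(s)\,ds\bigr|\le \varepsilon\int_{s_1}^{s_2}ds/s=\varepsilon\log(s_2/s_1)$, i.e. $(s_2/s_1)^{-\varepsilon}\le \rho(s_2)/\rho(s_1)\le (s_2/s_1)^{\varepsilon}$. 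This is the entire argument in the large-argument regime.

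To handle arbitrary $y\ge 1$, $t\ge 1$, I would set $c_1=\min_{[1,T]}\rho>0$, $c_2=\max_{[1,T]}\rho<\infty$ (finite and positive because $\rho$ is continuous on the compact interval $[1,T]$; both depend only on $\varepsilon$ and $\rho$), put $y^{+}=\max\{y,T\}$, $z^{+}=\max\{ty,T\}$, and note $z^{+}\ge y^{+}\ge T$ since $ty\ge y$. Then I would factor $\rho(ty)/\rho(y)=\bigl(\rho(ty)/\rho(z^{+})\bigr)\cdot\bigl(\rho(z^{+})/\rho(y^{+})\bigr)\cdot\bigl(\rho(y^{+})/\rho(y)\bigr)$. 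The first and third factors equal $1$ when the relevant argument is $\ge T$, and otherwise lie in $[c_1/c_2,\,c_2/c_1]$ because both of their arguments then lie in $[1,T]$. For the middle factor a short case check ($y\ge T$; $y<T\le ty$; $ty<T$) shows $z^{+}/y^{+}\le t$ in every case, so the displayed inequality gives $t^{-\varepsilon}\le \rho(z^{+})/\rho(y^{+})\le t^{\varepsilon}$. Multiplying the three bounds yields $(c_1/c_2)^2 t^{-\varepsilon}\le \rho(ty)/\rho(y)\le (c_2/c_1)^2 t^{\varepsilon}$, which is exactly the claimed two-sided estimate with implied constants depending only on $\varepsilon$ and $\rho$.

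The only point requiring care — and the one I would flag as the main (if minor) obstacle — is precisely the regime $y<T$, where the asymptotic smallness of $s\rho'(s)/\rho(s)$ is not yet in force; there the uniform two-sided bound of $\rho$ by positive constants on the compact interval $[1,T]$ is what rescues the argument, and it is the source of the dependence of the implied constants on $\rho$. Away from that, the whole lemma is the single logarithmic integral estimate above.
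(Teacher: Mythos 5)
Your proof is correct, and it is the standard argument: integrate the logarithmic derivative $(\log\rho)'=\rho'/\rho$ on $[T,\infty)$ where $|s\rho'(s)/\rho(s)|\le\varepsilon$, and absorb the compact range $[1,T]$ into the constants via the positive bounds $\min_{[1,T]}\rho$ and $\max_{[1,T]}\rho$. The paper itself gives no proof (it cites formula (60) of \cite{vas_besov}), and your argument is essentially the same route as that reference, with the case analysis for $y<T$ handled cleanly.
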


\section{The tree-like structure of the domain $\Omega \in {\cal G}_{\varphi_1, \, \dots, \, \varphi_{d-1}}$}

Let $\Omega \in {\cal G}_{\varphi_1, \, \dots, \, \varphi_{d-1}}$.

Given $1\le i\le d-1$, $k\in \N$, we define the numbers $n_{k,i}\in \N$ by the condition
\begin{align}
\label{nki_def} 2^{-n_{k,i}} \le \varphi_i(2^{-k-3}) < 2^{-n_{k,i}+1}
\end{align}
and consider the partition of the cube $(0, \, 1)^{d-1}$ into open parallelepipeds 
\begin{align}
\label{del_pr_kj}
\Delta'_{k,j} = \prod_{i=1}^{d-1} (\zeta_{k,j,i}, \, \zeta_{k,j,i} + 2^{-n_{k,i}}), \quad j\in {\bf W}_k.
\end{align}
We also set 
\begin{align}
\label{w0_def}
{\bf W}_0=\{1\}, \quad \Delta'_{0,1} = (0, \, 1)^{d-1}.
\end{align}

Since the functions $\varphi_i$ are non-decreasing, we have $n_{k-1,i} \le n_{k,i}$, $1\le i\le d-1$. Hence for each $k\ge 1$, $j\in {\bf W}_{k-1}$ the parallelepiped $\Delta'_{k-1,j}$ is divided into parallelepipeds as follows:
\begin{align}
\label{w_kj} \Delta'_{k-1,j} = \cup _{l\in {\bf W}_{k-1,j}}\Delta'_{k,l}.
\end{align}

From the third condition of \eqref{phi_prop} it follows that $2^{n_{k,i}} \underset{a_*}{\lesssim} 2^{n_{k-1,i}}$; therefore,
\begin{align}
\label{w_k1_j_1} \# {\bf W}_{k-1,j} \underset{a_*,d}{\lesssim} 1.
\end{align}

We set
\begin{align}
\label{c_kl_pm}
\begin{array}{c}
c^-_{k,l} =\inf \{\psi(x')-2^{-k}:\; x'\in \Delta'_{k-1,j}\}, \; l\in {\bf W}_{k-1,j}, \\ c^+_{k,l} = \inf \{\psi(x')-2^{-k-1}:\, x'\in \Delta'_{k,l}\}, 
\end{array}
\end{align}
\begin{align}
\label{del_kl_def} \Delta_{k,l} = \Delta'_{k,l} \times (c^-_{k,l}, \, c^+_{k,l}).
\end{align}
We show that
\begin{align}
\label{c_pm} c^+_{k,l} - c^-_{k,l} \asymp 2^{-k}.
\end{align}
Indeed, 
\begin{align}
\label{cpm_xpm}
c^-_{k,l} = \psi(x'_-) - 2^{-k}, \quad c^+_{k,l}= \psi(x'_+) - 2^{-k-1},
\end{align}
where $x'_-\in \overline{\Delta}'_{k-1,j}$, $x'_+\in \overline{\Delta}'_{k,l}$ (the bar denotes closure). Then for each $i=1, \, \dots, \, d-1$ we have $|x'_{+,i} - x'_{-,i}| \stackrel{\eqref{del_pr_kj}}{\le} 2^{-n_{k-1,i}} \stackrel{\eqref{nki_def}}{\le} \varphi_i(2^{-k-2})$. Hence, $|\psi(x'_+) - \psi(x'_-)| \stackrel{\eqref{psi_prop}}{\le} 2^{-k-2}$. This together with \eqref{cpm_xpm} implies \eqref{c_pm}.

Notice that $c_{k,l}^- = c^+_{k-1,j}$ for $k\ge 2$, $l\in {\bf W}_{k-1,j}$. We set 
\begin{align}
\label{1232}
c^+_{0,1} := c^-_{1,j} \stackrel{\eqref{w0_def},\eqref{c_kl_pm}}{=} \inf \{\psi(x')-2^{-1}:\; x'\in (0, \, 1)^{d-1}\} \stackrel{\eqref{psi_prop}}{\in} \Bigl[\frac 12, \, \frac 32\Bigr], \quad j\in {\bf W}_1.
\end{align}

Denote $\Delta_{0,1} = (0, \, 1)^{d-1} \times (0, \, c^+_{0,1})$, $$\hat \Theta = \{\Delta_{k,j}\}_{k\in \Z_+, \, j\in {\bf W}_k}.$$

We show that $\hat \Theta$ is a partition of $\Omega$. Indeed, from the construction it follows that the parallelepipeds $\Delta_{k,j}$ do not intersect pairwise. We claim that $\{\overline{\Delta}_{k,j}\}_{k\in \Z_+, \, j\in {\bf W}_k}$ is a covering of $\Omega$. Indeed, let $x=(x', \, x_d) \in \Omega$. If $x_d< c^+_{0,1}$, then $x\in \overline{\Delta}_{0,1}$. Let $x_d\ge c^+_{0,1}$. If 
\begin{align}
\label{xd_l_psi_21}
x_d<\psi(x')-2^{-1}, 
\end{align}
we set $k=0$; if $x_d\ge \psi(x')-2^{-1}$, we choose $k\in \N$ such that 
\begin{align}
\label{k_chc} x_d\in [\psi(x')-2^{-k}, \, \psi(x')-2^{-k-1}).
\end{align}
Now we choose $l\in {\bf W}_k$ such that $x'\in \overline{\Delta}'_{k,l}$. By \eqref{c_kl_pm} and \eqref{k_chc}, we have $x_d\ge c^-_{k,l}$ in the case $k\in \N$. If $x_d\le c^+_{k,l}$, then $x\in \overline{\Delta}_{k,l}$. Let $x_d> c^+_{k,l}$. We choose $t\in {\bf W}_{k+1}$ such that $x'\in \overline{\Delta}'_{k+1,t}$. By \eqref{c_kl_pm}, we have $x_d> c_{k+1,t}^-$. If $x_d\le c^+_{k+1,t}$, then $x\in \overline{\Delta} _{k+1,t}$. Let $x_d> c^+_{k+1,t}$. There is a point $x'_+\in \overline{\Delta}'_{k+1,t}$ such that $c^+_{k+1,t} = \psi(x'_+) - 2^{-k-2}$. Hence
$$
x_d> \psi(x'_+) - 2^{-k-2}\ge \psi(x') - 2^{-k-2} - |\psi(x') - \psi(x'_+)|
$$
$$
\stackrel{\eqref{psi_prop}, \eqref{nki_def}, \eqref{del_pr_kj}}{\ge} \psi(x') - 2^{-k-2} - 2^{-k-3} > \psi(x') - 2^{-k-1},
$$
which contradicts  \eqref{xd_l_psi_21} with $k=0$, and  \eqref{k_chc} with $k\in \N$.

Let ${\cal A}$ be a tree with a vertex set $\{\xi_{k,j}\}_{k\in \Z_+, \, j\in {\bf W}_k}$. As a minimal vertex, we take  $\xi_{0,1}$. The edges and the partial order are defined as follows: ${\bf V}_1(\xi_{0,1}) =\{\xi_{1,j}\}_{j\in {\bf W}_1}$; if $k\ge 2$, $j\in {\bf W}_{k-1}$, then ${\bf V}_1(\xi_{k-1,j}) = \{\xi_{k,l}:\; l\in {\bf W}_{k-1,j}\}$. From \eqref{w_k1_j_1} it follows that 
\begin{align}
\label{card_v1} {\rm card}\, {\bf V}_1(\xi) \underset{a_*,d}{\lesssim} 1.
\end{align}

The mapping $\hat F: {\bf V}({\cal A}) \rightarrow \hat \Theta$ is defined by the formula
\begin{align}
\label{hat_f_def}
\hat F(\xi_{k,j}) = \Delta _{k,j}, \; k\in \Z_+, \; j\in {\bf W}_k.
\end{align}

Let $\lambda_{k,i}$ be the length of $i$th edge of the parallelepiped $\Delta_{k,j}$. Then
\begin{align}
\label{lam_ki_est} \begin{array}{c} \lambda_{k,i} = 2^{-n_{k,i}} \stackrel{\eqref{phi_prop}, \eqref{nki_def}}{\underset{a_*}{\asymp}} \varphi_i(2^{-k}) \stackrel{\eqref{phi_prop}}{\underset{a_*}{\lesssim}} 2^{-k}, \quad i=1, \, \dots, \, d-1, \\ \lambda_{k,d} \stackrel{\eqref{del_kl_def}, \eqref{c_pm}, \eqref{1232}}{\asymp} 2^{-k}.\end{array}
\end{align}
It implies that
\begin{align}
\label{fkj_vol} |\hat F(\xi_{k,j})| \underset{a_*,d}{\asymp} 2^{-k} \prod _{i=1}^{d-1} \varphi_i(2^{-k}).
\end{align}

\section{Proof of Theorems \ref{main_entr}, \ref{main_widths}}

It suffices to consider $\Omega \in {\cal G}_{\varphi_1, \, \dots, \, \varphi_{d-1}}$.

Let ${\cal A}$ be the tree constructed in the previous section, let ${\cal A}' \subset {\cal A}$ be its subtree, and let $\xi_{k_0,j_0}$ be its minimal vertex. We denote
$$
\Omega_{{\cal A}'} = \Delta_{k_0,j_0} \cup \Bigl( \cup _{\xi _{k,j}\in {\bf V}({\cal A}'), \, \xi_{k,j} > \xi _{k_0,j_0}} \Delta'_{k,j} \times [c^-_{k,j}, \, c^+_{k,j})\Bigr),
$$
$$
\Omega^*_{{\cal A}'} = \cup _{\xi \in {\bf V}({\cal A}')} \hat F(\xi) \stackrel{\eqref{del_kl_def}, \eqref{hat_f_def}}{=} \cup _{\xi_{k,j}\in {\bf V}({\cal A}')} \Delta'_{k,j}\times (c^-_{k,j}, \, c^+_{k,j}).
$$
Then the set $\Omega_{{\cal A}'} \backslash \Omega^*_{{\cal A}'}$ has Lebesgue measure zero.

We obtain the upper estimate of the best approximation in $L_q$ of a function $f\in {\cal W}^r_p(\Omega)$ by a polynomial of degree at most $r-1$ on a subdomain $\Omega_{{\cal A}'}$. The conditions on the functions $\varphi_i$ will be more general than in Theorems \ref{main_entr}, \ref{main_widths}.

Given $0\le \alpha\le \beta\le 1$, we denote $$\Delta _{k,j} ^{\alpha,\beta} = \Delta'_{k,j} \times ((1-\alpha)c^-_{k,j} + \alpha c^+_{k,j}, \, (1-\beta)c^-_{k,j} + \beta c^+_{k,j}).$$

We set $$\mathfrak{Z} = (p, \, q, \, r, \, d, \, \varphi_1, \, \dots, \, \varphi_{d-1}).$$
\begin{Lem}
\label{emb_est1}
Suppose that
\begin{align}
\label{rdpdq_g0}
r + \frac dq - \frac dp>0
\end{align}
and for sufficiently large numbers $k$ the sequence 
\begin{align}
\label{est_seq}
k\mapsto 2^{-k(r+1/q-1/p)} \Bigl(\prod _{i=1}^{d-1} \varphi_i(2^{-k})\Bigr)^{1/q-1/p}
\end{align}
is non-increasing. In addition, suppose that, for $q=\infty$, there are numbers $\hat c_1>0$, $\hat c_2>0$ such that
\begin{align}
\label{geom_pr_infty} \frac{2^{-(k+j)(r-1/p)} \Bigl(\prod _{i=1}^{d-1} \varphi_i(2^{-k-j})\Bigr)^{-1/p}}{2^{-k(r-1/p)} \Bigl(\prod _{i=1}^{d-1} \varphi_i(2^{-k})\Bigr)^{-1/p}} \le \hat c_1\cdot 2^{-\hat c_2j}, \quad j\in \Z_+.
\end{align}
Then, for any function $f\in C^\infty(\Omega)$ such that 
\begin{align}
\label{f_dk0j0_0}
f|_{\Delta_{k_0,j_0}^{0,1/2}} = 0,
\end{align}
we have
\begin{align}
\label{nul_est} \|f\|_{L_q(\Omega_{{\cal A}'})} \underset{\mathfrak{Z}}{\lesssim} 2^{-k_0(r + 1/q - 1/p)} \Bigl(\prod _{i=1}^{d-1} \varphi_i(2^{-k_0})\Bigr)^{1/q-1/p} \|\nabla^r f\| _{L_p(\Omega_{{\cal A}'})}.
\end{align}
\end{Lem}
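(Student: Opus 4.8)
The estimate \eqref{nul_est} is a Sobolev-type inequality on the union $\Omega_{{\cal A}'}$ with a precise dependence of the constant on the scale $2^{-k_0}$ of the minimal parallelepiped. The idea is to decompose $\Omega_{{\cal A}'}$ into the dyadic pieces $\Delta'_{k,j}\times[c^-_{k,j},c^+_{k,j})$ indexed by $\xi_{k,j}\in{\bf V}({\cal A}')$, estimate $\|f\|_{L_q}$ on each piece by $\|\nabla^r f\|_{L_p}$ on a slightly enlarged piece together with a ``boundary'' term living on the interface $\Delta_{k,j}^{\alpha,\beta}$ with its parent, and then sum the resulting chain of inequalities. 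Because $f$ vanishes on $\Delta_{k_0,j_0}^{0,1/2}$, the chain terminates, and the geometric decay in $k$ coming from \eqref{est_seq} (and, when $q=\infty$, from \eqref{geom_pr_infty}) lets me sum the tree and pull out the factor at the root $k_0$.

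\textbf{Step 1: one-block Sobolev estimate.} On a single parallelepiped $\Delta_{k,j}$ of edge lengths $\lambda_{k,i}\underset{a_*}{\asymp}\varphi_i(2^{-k})$ ($i\le d-1$) and $\lambda_{k,d}\asymp 2^{-k}$, I would use the standard Sobolev embedding $\mathcal W^r_p\hookrightarrow L_q$ for a parallelepiped, rescaled to unit size. Under \eqref{rdpdq_g0} this gives
$$
\|g\|_{L_q(\Delta_{k,j})}\underset{\mathfrak Z}{\lesssim}\Bigl(2^{-k}\prod_{i=1}^{d-1}\varphi_i(2^{-k})\Bigr)^{1/q-1/p}\sum_{|\alpha|\le r}\prod_i\lambda_{k,i}^{\alpha_i}\|\partial_\alpha g\|_{L_p(\Delta_{k,j})},
$$
and by the Poincar\'e-type inequality (subtracting an appropriate polynomial, or exploiting that the relevant lower-order derivative of $f-P_{\text{parent}}$ has controlled $L_p$-norm via the interface term) I can reduce the right side to $\bigl(2^{-k}\prod\varphi_i(2^{-k})\bigr)^{1/q-1/p}\cdot 2^{-k(r-?)}\|\nabla^r f\|_{L_p}$ plus a trace term on the face shared with $\Delta_{k-1,j'}$. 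The anisotropy is harmless because $\varphi_i(2^{-k})\lesssim 2^{-k}$, so the "worst" direction is $x_d$ and the $r$th-order term is governed by $\lambda_{k,d}^r\asymp 2^{-kr}$; the exact bookkeeping of which mixed-derivative norms appear is routine but must be done carefully.

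\textbf{Step 2: telescoping along the tree.} Writing $f$ on $\Delta_{k,j}$ as (a polynomial piece) plus a remainder, and noting $c^-_{k,j}=c^+_{k-1,j'}$ so the parallelepipeds stack along $x_d$, I propagate the estimate from each vertex to its parent. The "boundary"/trace term at level $k$ is absorbed into the $L_q$-norm at level $k-1$ on the sub-block $\Delta_{k-1,j'}^{\,\cdot,\cdot}$ adjacent to $\Delta_{k,j}$, using $\#{\bf V}_1(\xi)\underset{a_*,d}{\lesssim}1$ from \eqref{card_v1} so that only boundedly many children feed into each parent. Summing over all vertices and using that the coefficients
$2^{-k(r+1/q-1/p)}\bigl(\prod_i\varphi_i(2^{-k})\bigr)^{1/q-1/p}$
are non-increasing in $k$ (hypothesis \eqref{est_seq}) — hence, by the doubling of the $\varphi_i$, decay at a fixed geometric rate relative to their value at $k_0$ — produces a convergent geometric series whose sum is $\underset{\mathfrak Z}{\lesssim}$ the $k_0$-th coefficient times $\|\nabla^r f\|_{L_p(\Omega_{{\cal A}'})}$. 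The vanishing condition \eqref{f_dk0j0_0} removes the polynomial constant at the root $\xi_{k_0,j_0}$, so nothing but $\|\nabla^r f\|_{L_p}$ survives. For $q<\infty$ the summation is an $\ell_q$-sum of $\ell_q$ telescoped pieces (use Minkowski / the discrete Hardy-type bound, or simply that $q\ge 1$ and the block-overlap is bounded); for $q=\infty$ the sup over blocks is controlled using the explicit geometric decay \eqref{geom_pr_infty}, which plays the role that \eqref{est_seq} plays for finite $q$.

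\textbf{Main obstacle.} The delicate point is Step 2: correctly handling the interface between a block $\Delta_{k,j}$ and its parent so that the chain of estimates actually telescopes without losing the geometric gain. Concretely, one must choose at each level the right polynomial (or the right averaged value) to subtract, control the $L_p$-norm of the low-order derivative of $f$ across the shared face by the $r$th-order derivative on the parent block (a one-dimensional Hardy/Sobolev argument in the $x_d$-variable, where the blocks are genuinely stacked), and verify that the loss factors multiply to a summable geometric series governed exactly by the monotone sequence \eqref{est_seq}. The anisotropy (the $\varphi_i$ shrinking faster than $2^{-k}$) must be tracked through every rescaling, but since $\varphi_i(t)\le a_* t$ and the $\varphi_i$ double, all such factors are comparable up to constants depending only on $\mathfrak Z$, so they do not affect the order — only the constants.
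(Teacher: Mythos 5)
Your route (a rescaled anisotropic Poincar\'e--Sobolev estimate on each block plus telescoping along the ancestor chains of the tree) is genuinely different from the paper's proof: there, one builds an explicit integral representation of $f(x)$ along a thickened polygonal path joining $x$ to the root block where $f$ vanishes, proves pointwise kernel bounds \eqref{kxy_est} (with Reshetnyak's device for $r\ge 2$), and then splits the resulting integral operator into a far-field part, summed by a weighted Hardy-type inequality for summation operators on trees (\eqref{v_w_cond}--\eqref{sum_est_0}), and a near-field part (the block of $x$ and its parent), handled by the Riesz-potential estimate under \eqref{rdpdq_g0}. A chaining argument of your type can in principle be carried out, and after telescoping it leads to exactly the same tree-summation problem; your one-block step and your use of \eqref{geom_pr_infty} for $q=\infty$ are consistent with that. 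The interface issue you single out as the main obstacle is in fact comparatively harmless: a block and its parent have comparable edge lengths and form a uniformly Lipschitz union, so the polynomial subtraction and the comparison of polynomial norms between a block and any of its descendants cost only constants depending on $\mathfrak{Z}$.

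The genuine gap is in the summation step of your Step 2. After telescoping, the $L_q$-norm of $f$ on a block at depth $s$ is bounded by a sum over its entire ancestor chain of terms of the form (coefficient at level $l$) $\times\ \|\nabla^r f\|_{L_p(\Delta_{k_0+l,j_l})}$, and your justification that the coefficients \eqref{est_seq} ``decay at a fixed geometric rate relative to their value at $k_0$'' because they are non-increasing and the $\varphi_i$ are doubling is false: monotonicity does not give geometric decay, and the hypotheses of the Lemma admit cases where \eqref{est_seq} is exactly constant (e.g. $d=2$, $\varphi_1(t)=t^{3}$, $p=2$, $q=4$, $r=1$, so $r=(\sigma(d-1)+1)(1/p-1/q)$ while \eqref{rdpdq_g0} still holds), so no geometric series in the coefficients is available. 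In the correct argument, monotonicity of \eqref{est_seq} is used only to replace a supremum over levels by its value at $k_0$; the summability over the tree comes from a different source, namely the geometric decay across levels of the aggregated block measures, $\sum_{\eta\in{\bf V}_j(\xi)}|\hat F(\eta)|\lesssim 2^{-j}|\hat F(\xi)|$, which is precisely condition \eqref{v_w_cond} and feeds the weighted Hardy-type bound \eqref{sum_est_0} (note that the number of depth-$j$ descendants of a fixed ancestor grows so as to compensate the pointwise decay of individual block measures, so the decay is genuinely an aggregate statement). You mention ``the discrete Hardy-type bound'' only as one of several options and never verify this level-decay condition, while your alternative justification (``$q\ge 1$ and the block-overlap is bounded'') is insufficient, since every deep block receives contributions from all of its ancestors. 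Without establishing the tree Hardy inequality (or an equivalent bookkeeping, e.g. H\"older with auxiliary weights $2^{\pm\varepsilon l}$ along chains, as in the paper's proof of \eqref{sum_est_0}), the proposed proof does not close.
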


\begin{proof} We denote by $x'_{k,j}$ the center of the parallelepiped $\Delta'_{k,j}$. We set 
\begin{align}
\label{x0_def_d12}
x_0 := \Bigl(x'_{k_0,j_0}, \, \frac{3c^-_{k_0,j_0} + c^+_{k_0,j_0}}{4}\Bigr) \in \Delta_{k_0,j_0}^{0,1/2}.
\end{align}

Let $x\in \Omega_{{\cal A}'} \backslash \Delta _{k_0,j_0}^{0,1/2}$. We construct the integral representation for $f(x)$ in terms of $\nabla^r f$.

First we consider the case $r = 1$.

We construct the polygonal line connecting $x$ and $x_0$. 

Let $s\in \Z_+$, $j_s \in {\bf W}_{k_0+s}$ be such that $x\in \Delta_{k_0+s,j_s}$, and let $(\xi_{k_0,j_0}, \, \xi_{k_0+1,j_1}, \, \dots, \, \xi_{k_0+s,j_s})$ be the chain in ${\cal A}'$ connecting $\xi_{k_0,j_0}$ and $\xi_{k_0+s,j_s}$.

If $s=0$, we connect $x$ and $x_0$ by the segment.

Let $s\in \N$. For $1\le l\le s-1$ we set $x_{(l)} = (x'_{k_0+l,j_l}, \, c^-_{k_0+l, j_l})$. We also denote $$\tilde x_{(s-1)} = \Bigl( x'_{k_0+s,j_s}, \, \frac{c^-_{k_0+s-1, j_{s-1}} + c^+_{k_0+s-1, j_{s-1}}}{2}\Bigr).$$
We connect the point $x$ with $\tilde x_{(s-1)}$ by the segment. If $s=1$, we connect $\tilde x_{(s-1)}$ with $x_0$ by the segment. If $s\ge 2$, then we connect $\tilde x_{(s-1)}$ with $x_{(s-1)}$ by the segment, then for each $l\in \{2, \, \dots, \, s-1\}$ we connect $x_{(l)}$ and $x_{(l-1)}$, $x_{(1)}$ and $x_0$ by the segment.

We take the natural parametrization on the constructed polygonal line with respect to the Euclidean norm $|\cdot|$ on $\R^d$ and obtain the piecewise-affine function $\gamma_{x,x_0}:[0, \, T_{x,x_0}] \rightarrow \Omega _{{\cal A}'}$ such that 
\begin{align}
\label{beg_end}
\gamma_{x,x_0}(0)=x, \quad \gamma_{x,x_0}(T_{x,x_0}) = x_0.
\end{align}

We define the numbers $t_{(l)}$ $(1\le l\le s-1)$ by the equation $\gamma_{x,x_0}(t_{(l)}) = x_{(l)}$, and the number $\tilde t_{(s-1)}$, by the equation $\gamma_{x,x_0}(\tilde t_{(s-1)}) = \tilde x_{(s-1)}$.

Notice that by \eqref{lam_ki_est}, for $s\ge 2$,
\begin{align}
\label{tltl1_dif}
\begin{array}{c}
T_{x,x_0}-t_{(1)} = |x_0-x_{(1)}| \underset{d, a_*}{\asymp} 2^{-k_0}, \quad t_{(l)}- t_{(l+1)} = |x_{(l)} - x_{(l+1)}| \underset{d, a_*}{\asymp} 2^{-k_0-l}, \; 1\le l\le s-2, \\ t_{(s-1)} - \tilde t_{(s-1)} = |x_{(s-1)} - \tilde x_{(s-1)}| \underset{d, a_*}{\asymp} 2^{-k_0-s}, \; \tilde t_{(s-1)} =|x-\tilde x_{(s-1)}| \underset{d, a_*}{\asymp} 2^{-k_0-s},
\end{array}
\end{align}
for $s=1$,
\begin{align}
\label{tltl1_dif1}
T_{x,x_0}-\tilde t_{(1)} \underset{d, a_*}{\asymp} 2^{-k_0}, \quad \tilde t_{(1)} \underset{d, a_*}{\asymp} 2^{-k_0},
\end{align}
and for $s=0$,
\begin{align}
\label{tltl1_dif2}
T_{x,x_0} \underset{d, a_*}{\asymp} 2^{-k_0}.
\end{align}

We define the numbers $r_i(t)$, $t\in [0, \, T_{x,x_0}]$, $i=1, \, \dots, \, d$, as follows: for $1\le i\le d-1$ we set
\begin{align}
\label{gam_z_knot} r_i(t) = \begin{cases} 0, & t = 0, \\ 2^{-n_{k_0+s-1},i}, & t= \tilde t_{(s-1)}, \\  2^{-n_{k_0+l},i}, & t=t_{(l)}, \; 1\le l\le s-1, \\ 2^{-n_{k_0,i}}, & t=T_{x,x_0}, \end{cases}
\end{align}
for $i=d$,
\begin{align}
\label{gam_z_knot_d} r_d(t) = \begin{cases} 0, & t = 0, \\ 2^{-k_0-s+1}, & t= \tilde t_{(s-1)}, \\ 2^{-k_0-l}, & t=t_{(l)}, \; 1\le l\le s-1, \\  2^{-k_0}, & t=T_{x,x_0};\end{cases}
\end{align}
then we interpolate $r_i(t)$ $(1\le i\le d)$ as the piecewise-affine function on $[0, \, T_{x,x_0}]$ with knots at $0, \, \tilde t_{(s-1)}, \, t_{(s-1)}, \, t_{(s-2)}, \,   \dots, \, t_{(1)}, \, T_{x,x_0}$.

From \eqref{phi_prop}, \eqref{nki_def} it follows that $2^{-n_{k,i}} \underset{a_*}{\lesssim} 2^{-k}$. Hence, if $s\ge 2$, for $1\le i\le d$ we have $|r_i(t_{(1)}) - r_i(T_{x,x_0})| \lesssim 2^{-k_0}$, $|r_i(t_{(l+1)}) - r_i(t_{(l)})| \lesssim 2^{-k_0-l}$, $1\le l\le s-2$, $|r_i(\tilde t_{(s-1)}) - r_i(t_{(s-1)})| \lesssim 2^{-k_0-s}$, $|r_i(0)-r_i(\tilde t_{(s-1)})| \lesssim 2^{-k_0-s}$. This together with \eqref{tltl1_dif} implies that
\begin{align}
\label{dot_ri_t}
|\dot r_i(t)| \underset{d,a_*}{\lesssim} 1.
\end{align}
For $s=0$ and $s=1$ the estimate \eqref{dot_ri_t} is also true (see \eqref{tltl1_dif1}, \eqref{tltl1_dif2}).

Let $0<c\le 1$.
Denote 
\begin{align}
\label{bxx0t_def}
B_{x,x_0,t} = \gamma_{x,x_0}(t) + \prod _{i=1}^d [ - c\cdot r_i(t), \, c\cdot r_i(t)], \; G_x = \cup _{t\in [0, \, T_{x,x_0}]} B_{x,x_0,t}.
\end{align}

Given $(z, \, t) = (z_1, \, \dots, \, z_d, \, t) \in [-1, \, 1]^d\times [0, \, T_{x,x_0}]$, we set
\begin{align}
\label{gam_xx0z_def}
\gamma_{x,x_0,z}(t) = \gamma_{x,x_0}(t) + c\cdot (r_1(t)z_1, \, \dots, \, r_d(t)z_d) \in B_{x,x_0,t}.
\end{align}

Denote by $[\xi_{k_0,j_0}, \, \xi_{k_0+s,j_s}]$ the subtree in ${\cal A}$, whose vertex set is the chain $\xi_{k_0,j_0}, \, \dots$, $\xi_{k_0+s,j_s}$.

We choose $c = c(d, \, a_*)\in (0, \, 1]$ so that the following inclusions and inequalities hold:
\begin{align}
\label{incl_c} 
\begin{array}{c}
\gamma_{x,x_0,z}(t)\in \Omega_{[\xi_{k_0,j_0}, \, \xi_{k_0+s,j_s}]}, \quad t \in [0, \, T_{x,x_0}], \; x\in \Delta_{k_0+s,j_s}\backslash \Delta_{k_0,j_0}^{0,1/2}, \; z\in [-1, \, 1]^d; \\ \gamma_{x,x_0,z}(T_{x,x_0}) \in  \Delta_{k_0,j_0}^{0,1/2},
\end{array}
\end{align}
\begin{align}
\label{dot_gamma} \frac 12 \le |\dot \gamma_{x,x_0,z}(t)| \le 2
\end{align}
(\eqref{dot_gamma} is possible by  \eqref{dot_ri_t} and the choice of the natural parametrization of the polygonal line  $\gamma_{x,x_0}$; the second inclusion in \eqref{incl_c} follows from \eqref{x0_def_d12} and \eqref{beg_end}). From \eqref{bxx0t_def}, \eqref{gam_xx0z_def} and the first inclusion \eqref{incl_c} it follows that
\begin{align}
\label{gx_cep} G_x \subset \Omega _{[\xi_{k_0,j_0}, \, \xi_{k_0+s,j_s}]}.
\end{align}

From \eqref{beg_end}, \eqref{gam_z_knot}, \eqref{gam_z_knot_d}, \eqref{gam_xx0z_def} it follows that $\gamma _{x,x_0,z}(0)=x$ for each $z\in [-1, \, 1]^d$. Applying the Newton--Leibnitz formula and taking into account \eqref{f_dk0j0_0} and the second inclusion in \eqref{incl_c}, we get for each $z\in [-1, \, 1]^d$
$$
f(x) = f(\gamma _{x,x_0,z}(0)) - f(\gamma_{x,x_0,z}(T_{x,x_0})) = -\int _0^{T_{x,x_0}} \langle \nabla f(\gamma_{x,x_0,z}(t)), \, \dot \gamma_{x,x_0,z}(t)\rangle \, dt.
$$
We integrate this equality over $z\in [-1, \, 1]^d$:
$$
2^d f(x) = -\int \limits _{[-1, \, 1]^d} \int _0^{T_{x,x_0}} \langle \nabla f(\gamma_{x,x_0,z}(t)), \, \dot \gamma_{x,x_0,z}(t)\rangle \, dt\, dz =:I.
$$
We change the variables to  $(t, \, z) \mapsto (t, \, y)$ from the condition  $y = \gamma_{x,x_0,z}(t)$, and set 
\begin{align}
\label{wty_def}
w(t, \, y) = \dot \gamma_{x,x_0,z}(t)\cdot c^{-d}\prod _{i=1}^d (r_i(t))^{-1}.
\end{align}
Taking into account \eqref{bxx0t_def}, we get
\begin{align}
\label{2dfx_int}
2^d f(x) = -\int \limits _0^{T_{x,x_0}} \int \limits _{B_{x,x_0,t}}\langle \nabla f(y), \, w(t, \, y) \rangle \, dy\, dt = \int \limits _{G_x} \langle \nabla f(y), \, K(x, \, y)\rangle \, dy,
\end{align}
where 
\begin{align}
\label{kxy_w}
K(x, \, y) = -\int \limits _{t:\; y\in B_{x,x_0,t}} w(y, \, t)\, dt.
\end{align}

Let $y\in B_{x,x_0,t}$. We set $t_{(0)} = T_{x,x_0}$. Then, if $c >0$ is sufficiently small (the maximal value depends only on $a_*$ and $d$), the following inclusions hold. If $y\in \Delta_{k_0+l,j_l}$, $0\le l\le s-2$, then 
\begin{align}
\label{t_y_bxx0}
t \in \begin{cases} [t_{(2)}, \, t_{(0)}], & l=0\le s-3, \\ [t_{(l+2)}, \, t_{(l-1)}], & 1\le l\le s-3, \\ [\tilde t_{(s-1)}, \, t_{(s-3)}], & l=s-2\ge 1, \\ [\tilde t_{(1)}, \, t_{(0)}], & l = s-2=0;\end{cases}
\end{align}
this together with \eqref{phi_prop}, \eqref{nki_def}, \eqref{gam_z_knot}, \eqref{gam_z_knot_d} implies
\begin{align}
\label{r_i_est1} r_i(t) \underset{\mathfrak{Z}}{\asymp} \begin{cases} \varphi_i(2^{-k_0-l}), & 1\le i\le d-1, \\ 2^{-k_0-l}, & i=d.\end{cases}
\end{align}
Let $y\in \Delta_{k_0+s-1,j_{s-1}} \cup \Delta _{k_0+s, j_s}$. Then, if $c=c(d, \, a_*)$ is sufficiently small, we have 
\begin{align}
\label{t_por_xmy}
t\underset{\mathfrak{Z}}{\asymp} |x_d-y_d|,
\end{align}
\begin{align}
\label{r_i_est2}
r_i(t) \underset{\mathfrak{Z}}{\asymp} \begin{cases} |x_d-y_d|\cdot 2^{k_0+s}\varphi_i(2^{-k_0-s}), & 1\le i\le d-1, \\ |x_d-y_d|, & i=d.\end{cases}
\end{align}
Further, if $c=c(d, \, a_*)$ is sufficiently small, by \eqref{r_i_est1}, \eqref{r_i_est2} and the definition of $\gamma_{x,x_0}$ we get
\begin{align}
\label{max_xi} \max_{1\le i\le d-1} 2^{n_{k_0+s,i}} |x_i -y_i| \underset{\mathfrak{Z}}{\lesssim} 2^{k_0+s} |x_d - y_d|.
\end{align}

By \eqref{tltl1_dif}, \eqref{t_y_bxx0}, \eqref{t_por_xmy}, we have
$$
{\rm mes}\,\{t:\; y\in B_{x,x_0,t}\} \underset{\mathfrak{Z}}{\lesssim} \begin{cases} 2^{-k_0-l}, & y\in \Delta_{k_0+l,j_l}, \; l\le s-2, \\ |x_d - y_d|, & y\in \Delta _{k_0+s-1,j_{s-1}} \cup \Delta_{k_0+s,j_s};\end{cases}
$$
taking into account \eqref{dot_gamma}, \eqref{wty_def}, \eqref{kxy_w}, \eqref{r_i_est1}, \eqref{r_i_est2}, we get
\begin{align}
\label{kxy_est} |K(x, \, y)| \underset{\mathfrak{Z}}{\lesssim} 
\begin{cases} 
\prod _{i=1}^{d-1} (\varphi _i(2^{-k_0-l}))^{-1}, & y\in \Delta_{k_0+l,j_l}, \; l\le s-2, \\ |x_d-y_d|^{1-d} 2^{-(k_0+s)(d-1)} \prod _{i=1}^{d-1}(\varphi _i(2^{-k_0-s}))^{-1}, & y\in \Delta _{k_0+s-1,j_{s-1}} \cup \Delta_{k_0+s,j_s}.
\end{cases}
\end{align}

Thus, we have got for $r=1$ the integral representation \eqref{2dfx_int} with the kernel $K(x, \, y)$ satisfying \eqref{kxy_est}. For $r\ge 2$, we use \eqref{2dfx_int}, apply Reshetnyak's method \cite{resh2} and get
\begin{align}
\label{r_gr}
|f(x)| \underset{\mathfrak{Z}}{\lesssim} \int \limits _{G_x} |\nabla^r f(y)|\cdot |x-y|^{r-1} \cdot |K(x, \, y)| \, dy.
\end{align}

Denote
\begin{align}
\label{til_k_def}
\tilde K(x, \, y) = |x-y|^{r-1} \cdot |K(x, \, y)|.
\end{align}

Now we prove \eqref{nul_est}. 

First we consider the case $q<\infty$.
We have
$$
\Bigl(\int \limits _{\Omega_{{\cal A}'}} |f(x)|^q\, dx\Bigr)^{1/q} = \Bigl(\sum \limits _{\xi \in {\bf V}({\cal A}')} \int \limits _{\hat F(\xi)} |f(x)|^q\, dx \Bigr)^{1/q} \stackrel{\eqref{r_gr}, \eqref{til_k_def}}{\underset{\mathfrak{Z}}{\lesssim}}
$$
$$
\lesssim \Bigl(\sum \limits _{\xi \in {\bf V}({\cal A}')} \int \limits _{\hat F(\xi)} \Bigl( \int \limits _{G_x} |\nabla^r f(y)|\cdot \tilde K(x, \, y)\, dy\Bigr)^q\, dx\Bigr)^{1/q} \stackrel{\eqref{gx_cep}}{\le}
$$
$$
\le \Bigl(\sum \limits _{\xi \in {\bf V}({\cal A}')} \int \limits _{\hat F(\xi)} \Bigl( \sum \limits _{\xi_{k_0,j_0}\le \eta \le \xi,\, \rho(\xi, \, \eta) \ge 2} \int \limits _{G_x\cap \hat F(\eta)} |\nabla^r f(y)|\cdot \tilde K(x, \, y)\, dy\Bigr)^q\, dx\Bigr)^{1/q} +
$$
$$
+ \Bigl(\sum \limits _{\xi \in {\bf V}({\cal A}')} \int \limits _{\hat F(\xi)} \Bigl| \sum \limits _{\xi_{k_0,j_0}\le \eta\le \xi,\, \rho(\xi, \, \eta) \le 1} \int \limits _{G_x\cap \hat F(\eta)} |\nabla^r f(y)|\cdot \tilde K(x, \, y)\, dy\Bigr|^q\, dx\Bigr)^{1/q} =: I_1+I_2.
$$

We estimate $I_1$. Let $\xi = \xi_{k_0+s,j_s}$, $\eta = \xi_{k_0+l,j_l}$, $0\le l\le s-2$, $x\in \hat F(\xi)$, $y\in \hat F(\eta)$. Then $|x-y| \stackrel{\eqref{lam_ki_est}}{\underset{\mathfrak{Z}}{\asymp}} 2^{-k_0-l}$,
\begin{align}
\label{i1_est1}
\begin{array}{c}
\int \limits _{G_x\cap \hat F(\eta)} |\nabla^r f(y)| \cdot \tilde K(x, \, y)\, dy \stackrel{\eqref{kxy_est}, \eqref{til_k_def}}{\underset{\mathfrak{Z}}{\lesssim}} \prod _{i=1} ^{d-1} (\varphi_i(2^{-k_0-l})) ^{-1} 2^{-(k_0+l)(r-1)}|\hat F(\eta)| ^{1/p'}  \|\nabla^r f\| _{L_p(\hat F(\eta))} \stackrel{\eqref{fkj_vol}}{\underset{\mathfrak{Z}}{\asymp}}
\\
\asymp \prod _{i=1} ^{d-1} (\varphi_i(2^{-k_0-l})) ^{-1/p} 2^{-(k_0+l)(r-1/p)} \|\nabla^r f\| _{L_p(\hat F(\eta))}.
\end{array}
\end{align}
Given $t\in {\bf W}_{k_0+s}$, we denote by $j_{l,t}$ the index from ${\bf W}_{k_0+l}$ such that $\xi_{k_0+l,j_{l,t}} \le \xi_{k_0+s,t}$. We also put $\tilde {\bf W}_{k_0+s} = {\bf W}_{k_0+s} \cap {\bf V}({\cal A}')$. From \eqref{i1_est1} we obtain 
$$
I_1^q \underset{\mathfrak{Z}}{\lesssim} \sum \limits _{s\ge 2, \, t\in \tilde{\bf W}_{k_0+s}}\int \limits _{\Delta _{k_0+s,t}}\Bigl( \sum \limits _{l=0}^{s-2} 2^{-(k_0+l)(r-1/p)}\prod _{i=1} ^{d-1} (\varphi_i(2^{-k_0-l})) ^{-1/p}  \|\nabla^r f\| _{L_p(\Delta_{k_0+l,j_{l,t}})}\Bigr)^q\, dx \stackrel{\eqref{fkj_vol}}{\underset{\mathfrak{Z}}{\asymp}}
$$
$$
\asymp \sum \limits _{s\ge 2, \, t\in \tilde{\bf W}_{k_0+s}}2^{-k_0-s}\prod _{i=1}^{d-1} \varphi_i(2^{-k_0-s}) \Bigl( \sum \limits _{l=0}^{s-2} 2^{-(k_0+l)(r-1/p)}\prod _{i=1} ^{d-1} (\varphi_i(2^{-k_0-l})) ^{-1/p}  \|\nabla^r f\| _{L_p(\Delta_{k_0+l,j_{l,t}})}\Bigr)^q.
$$
Hence
\begin{align}
\label{i1_est_s}
I_1 \underset{\mathfrak{Z}}{\lesssim} \mathfrak{S}^{p,q}_{{\cal A}',g,v} \|\nabla^r f\|_{L_p(\Omega_{{\cal A}'})},
\end{align}
where
$$
g(\xi) = 2^{-(k_0+l)(r-1/p)}\prod _{i=1} ^{d-1} (\varphi_i(2^{-k_0-l})) ^{-1/p}, \quad v(\xi) = 2^{-(k_0+l)/q}\prod _{i=1}^{d-1} \varphi_i^{1/q}(2^{-k_0-l})
$$
for $\xi \in {\bf V}_l(\xi_{k_0,j_0})$.

We check the condition \eqref{v_w_cond}. Indeed, let $\xi \in {\bf V}_l(\xi_{k_0,j_0})\cap {\bf V}({\cal A}')$, $j\in \Z_+$. Then
$$
\sum \limits _{\eta \in {\bf V}_j(\xi)\cap {\bf V}({\cal A}')} v^q(\eta) \le 2^{-k_0-l-j} \prod _{i=1}^{d-1} \varphi_i(2^{-k_0-l-j}) \prod _{i=1} ^{d-1} \frac{2^{n_{k_0+l+j,i}}}{2^{n_{k_0+l,i}}} \stackrel{\eqref{phi_prop}, \eqref{nki_def}}{\underset{a_*,d}{\asymp}}
$$
$$
\asymp 2^{-j}\cdot 2^{-k_0-l} \prod _{i=1}^{d-1} \varphi_i(2^{-k_0-l}) = 2^{-j} v^q(\xi).
$$

Thus, \eqref{sum_est_0} holds. Taking into account that the sequence \eqref{est_seq} is non-increasing by the conditions of the present lemma, we get
$$\mathfrak{S}^{p,q}_{{\cal A}',g,v}\underset{\mathfrak{Z}}{\lesssim} 2^{-k_0(r-1/p+1/q)}\prod _{i=1} ^{d-1} (\varphi_i(2^{-k_0})) ^{1/q-1/p},$$
\begin{align}
\label{i1_est} I_1 \stackrel{\eqref{i1_est_s}}{\underset{\mathfrak{Z}}{\lesssim}} 2^{-k_0(r+1/q-1/p)}\prod _{i=1} ^{d-1} (\varphi_i(2^{-k_0})) ^{1/q-1/p}  \|\nabla^r f\|_{L_p(\Omega_{{\cal A}'})}.
\end{align}

Now we estimate $I_2$.

\begin{Not}
\label{notation}
Given $\xi \in {\bf V}({\cal A})$, we denote by $T_\xi:\R^d \rightarrow \R^d$ the affine mapping, which is a composition of dilations along coordinate axes and a translation, which 
maps  $\hat F(\xi)$ to $[0,\, 1]^d$.
\end{Not}

Let $\xi = \xi_{k_0+s,j_s}$. We set 
$$
E_\xi = \begin{cases}\hat F(\xi_{k_0+s,j_s}) \sqcup \hat F(\xi_{k_0+s-1,j_{s-1}}), & s\ge 1, \\ \hat F(\xi_{k_0+s,j_s}), & s=0,\end{cases}
$$
$$C_\xi = \prod _{i=1} ^{d-1} (\varphi_i(2^{-k_0-s}))^{-1} \cdot 2^{-(k_0+s)(d-1)},$$ $M_\xi = T_\xi(E_\xi)$. Then (see \eqref{card_v1}, \eqref{kxy_est}, \eqref{til_k_def} and the condition $p\le q$)
\begin{align}
\label{i2_set}
I_2 \underset{\mathfrak{Z}}{\lesssim} \max _{\xi \in {\bf V({\cal A}')}} A_{2,\xi} \|\nabla^r f\|_{L_p(\Omega_{{\cal A}'})},
\end{align}
where $A_{2,\xi}$ is the minimal constant $D$ in the inequality
$$
C_\xi\Bigl(\int \limits_{\hat F(\xi)}\Bigl(\int \limits _{E_\xi \cap G_x} |\varphi(y)|\cdot |x-y|^{r-1}|x_d-y_d|^{1-d}\, dy\Bigr)^q\, dx\Bigr)^{1/q} \le D \|\varphi\| _{L_p(E_\xi)}.
$$
Changing the variables via the mapping  $T_\xi$ and taking into account \eqref{lam_ki_est}, \eqref{fkj_vol}, \eqref{max_xi}, we get $A_{2,\xi}\underset{\mathfrak{Z}}{\lesssim}\tilde D$, where $\tilde D$ is the minimal constant in the inequality
$$
\prod _{i=1}^{d-1} (\varphi_i(2^{-k_0-s}))^{-1}\cdot 2^{-(k_0+s)(d-1)} \cdot 2^{-(k_0+s)(1/q+1)} \prod _{i=1}^{d-1} (\varphi_i(2^{-k_0-s}))^{1/q+1} \cdot 2^{-(k_0+s)(r-d)}   \times$$$$\times\Bigl(\int \limits _{[0, \, 1]^d} \Bigl(\int \limits _{M_\xi} |\tilde \varphi(z)|\cdot|w-z|^{r-d}\, dz\Bigr)^q\, dw\Bigr)^{1/q}\le $$$$\le\tilde D \cdot 2^{-(k_0+s)/p} \prod _{i=1}^{d-1} (\varphi_i(2^{-k_0-s}))^{1/p}\|\tilde \varphi\| _{L_p(M_\xi)}.
$$
Recall that \eqref{rdpdq_g0} holds. Applying the theorem about the continuity of the convolution operator with the kernel $|x-y|^{r-d}$ (see \cite[Ch. I, \S 6]{sob_book}, \cite[section 1.4.1, Theorem 2]{mazya_book}), we get
$$
A_{2,\xi} \underset{\mathfrak{Z}}{\lesssim} 2^{-(k_0+s)(r + 1/q-1/p)}\prod _{i=1}^{d-1} (\varphi_i(2^{-k_0-s}))^{1/q-1/p}.
$$
Since the sequence \eqref{est_seq} is non-increasing, this together with \eqref{i2_set} implies
\begin{align}
\label{i2_est}
I_2 \underset{\mathfrak{Z}}{\lesssim} 2^{-k_0(r + 1/q-1/p)}\prod _{i=1}^{d-1} (\varphi_i(2^{-k_0}))^{1/q-1/p} \|\nabla^r f\| _{L_p(\Omega _{{\cal A}'})}.
\end{align}
From \eqref{i1_est} and \eqref{i2_est} we get \eqref{nul_est}.

In the case $q=\infty$ the proof is more simple. We have
$$
\|f\|_{L_\infty(\Omega_{{\cal A}'})} \underset{\mathfrak{Z}}{\lesssim} \sup _{\xi \in {\bf V}({\cal A}')}\sum _{\xi_{k_0,j_0}\le \eta\le \xi, \, \rho(\xi, \, \eta) \ge 2} \int \limits _{G_x \cap \hat F(\eta)} |\nabla^r f(y)| \tilde K(x, \, y) \, dy +$$$$+\sup _{\xi \in {\bf V}({\cal A}')} \sum _{\xi_{k_0,j_0}\le \eta\le \xi, \, \rho(\xi, \, \eta) \le 1} \int \limits _{G_x \cap \hat F(\eta)} |\nabla^r f(y)| \tilde K(x, \, y) \, dy.
$$
Recall the notation $\tilde{\bf W}_{k_0+s} = {\bf W}_{k_0+s} \cap {\bf V}({\cal A}')$. By \eqref{i1_est1}, the first summand can be estimated (up to a multiplicative constant depending only on $\mathfrak{Z}$) by
$$
\sup_{s\ge 2, \, t\in \tilde{\bf W}_{k_0+s}} \sum \limits _{l=0}^{s-2} 2^{-(k_0+l)(r-1/p)}\prod _{i=1} ^{d-1} (\varphi_i(2^{-k_0-l})) ^{-1/p}  \|\nabla^r f\| _{L_p(\Delta_{k_0+l,j_{l,t}})} \stackrel{\eqref {geom_pr_infty}}{\underset{\mathfrak{Z}}{\lesssim}}$$$$ \lesssim 2^{-k_0(r-1/p)}\prod _{i=1} ^{d-1} (\varphi_i(2^{-k_0})) ^{-1/p} \|\nabla^r f\| _{L_p(\Omega_{{\cal A}'})}.
$$
The second summand can be estimated as in the case $q<\infty$.
\end{proof}

\begin{Lem}
\label{lem2est}
Let the conditions of Lemma {\rm \ref{emb_est1}} hold. Then, for each vertex $\xi_{k_0,j_0}\in {\bf V}({\cal A})$,
there is a continuous linear projection $Q_{k_0,j_0}:L_q(\Omega) \rightarrow {\cal P}_{r-1}(\Omega)$ such that for any subtree $({\cal A}', \, \xi_{k_0,j_0})$ in ${\cal A}$ and any function $f\in {\cal W}^r_p(\Omega)$
\begin{align}
\label{appr_est} \|f- Q_{k_0,j_0}f\|_{L_q(\Omega _{{\cal A}'})} \underset{\mathfrak{Z}}{\lesssim} 2^{-k_0(r + 1/q-1/p)}\prod _{i=1}^{d-1} (\varphi_i(2^{-k_0}))^{1/q-1/p} \|\nabla^r f\|_{L_p(\Omega_{{\cal A}'})}.
\end{align}
\end{Lem}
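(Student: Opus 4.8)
The plan is to approximate $f$ on $\Delta_{k_0,j_0}$ by a polynomial $Q_{k_0,j_0}f$ of degree $\le r-1$, to localize the error $g:=f-Q_{k_0,j_0}f$ near the ``floor'' $\Delta^{0,1/2}_{k_0,j_0}$ of $\Delta_{k_0,j_0}$ by a cutoff in the variable $x_d$, and to estimate the two resulting pieces separately --- the piece vanishing on $\Delta^{0,1/2}_{k_0,j_0}$ by Lemma~\ref{emb_est1}, and the piece supported near the floor by a rescaled Sobolev embedding on the single parallelepiped $\Delta_{k_0,j_0}$. By mollification it suffices to prove \eqref{appr_est} for $f\in C^\infty(\Omega)\cap{\cal W}^r_p(\Omega)$ and to extend to general $f$ by density. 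Set $D:=\Delta^{0,3/4}_{k_0,j_0}$; by \eqref{lam_ki_est} and \eqref{c_pm} its $i$-th edge has length $\underset{a_*}{\asymp}\varphi_i(2^{-k_0})$ for $i\le d-1$ and $\asymp 2^{-k_0}$ for $i=d$, so ${\rm diam}\,D\underset{\mathfrak{Z}}{\asymp}2^{-k_0}$ by \eqref{phi_prop}. I would take $Q_{k_0,j_0}f\in{\cal P}_{r-1}(\Omega)$ to be the polynomial projection on $D$ whose derivatives of orders $0,\dots,r-1$ have vanishing mean over $D$ (this is a linear projection; its continuity on $L_q(\Omega)$, required in the statement, can be arranged by a routine modification of the construction, which I suppress). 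Since $D$ is \emph{convex}, iterating the Poincar\'e inequality on convex bodies --- whose constant is comparable to the diameter, uniformly in the aspect ratio --- gives, with constant depending only on $d,r,p$,
$$
\|\nabla^j g\|_{L_p(D)}\underset{\mathfrak{Z}}{\lesssim}2^{-k_0(r-j)}\|\nabla^r f\|_{L_p(D)},\qquad 0\le j\le r .
$$
Mapping $D$ onto the unit cube by $T_{\xi_{k_0,j_0}}$ (Notation~\ref{notation}), using the embedding $W^r_p\hookrightarrow L_q$ on the unit cube (valid by \eqref{rdpdq_g0}) together with the previous line and $\varphi_i(2^{-k_0})\le a_*2^{-k_0}$ --- so that the powers of $2^{k_0}$ produced when rescaling the lower order derivatives cancel $2^{-k_0(r-j)}$ --- one obtains
$$
\|g\|_{L_q(D)}\underset{\mathfrak{Z}}{\lesssim}2^{-k_0(r+1/q-1/p)}\Bigl(\prod_{i=1}^{d-1}\varphi_i(2^{-k_0})\Bigr)^{1/q-1/p}\|\nabla^r f\|_{L_p(\Omega_{{\cal A}'})} .
$$

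Next, fix $\chi=\chi(x_d)\in C^\infty(\R)$, obtained by rescaling a fixed profile, with $\chi\equiv1$ for $x_d\le\tfrac12(c^-_{k_0,j_0}+c^+_{k_0,j_0})$ and $\chi\equiv0$ for $x_d\ge\tfrac14(c^-_{k_0,j_0}+3c^+_{k_0,j_0})$; by \eqref{c_pm} the transition occupies an interval of length $\asymp 2^{-k_0}$, hence $|\partial_{x_d}^m\chi|\underset{\mathfrak{Z}}{\lesssim}2^{k_0m}$, and on $\Omega_{{\cal A}'}$ one has $\chi\equiv1$ on $\Delta^{0,1/2}_{k_0,j_0}$ and $\chi\equiv0$ off $D$. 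Write $g=(1-\chi)g+\chi g$. As $\chi$ depends only on $x_d$, Leibniz' rule gives on the transition layer $\Delta^{1/2,3/4}_{k_0,j_0}\subset D$
$$
|\nabla^r((1-\chi)g)|\underset{\mathfrak{Z}}{\lesssim}\sum_{m=0}^{r}|\partial_{x_d}^m\chi|\,|\nabla^{r-m}g|\underset{\mathfrak{Z}}{\lesssim}\sum_{m=0}^{r}2^{k_0m}|\nabla^{r-m}g| ,
$$
while off that layer $\nabla^r((1-\chi)g)$ vanishes on $\Delta^{0,1/2}_{k_0,j_0}$ and equals $(1-\chi)\nabla^r f$ elsewhere; combining with the first displayed estimate (with $j=r-m$ on $D\supset\Delta^{1/2,3/4}_{k_0,j_0}$) the factors $2^{k_0m}$ cancel and $\|\nabla^r((1-\chi)g)\|_{L_p(\Omega_{{\cal A}'})}\underset{\mathfrak{Z}}{\lesssim}\|\nabla^r f\|_{L_p(\Omega_{{\cal A}'})}$. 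Since $(1-\chi)g\in C^\infty(\Omega)$ vanishes on $\Delta^{0,1/2}_{k_0,j_0}$, Lemma~\ref{emb_est1} bounds $\|(1-\chi)g\|_{L_q(\Omega_{{\cal A}'})}$ by the right-hand side of \eqref{appr_est}. Finally $\|\chi g\|_{L_q(\Omega_{{\cal A}'})}\le\|g\|_{L_q(D)}$ because $0\le\chi\le1$ and $\chi\equiv0$ off $D$, and adding the two bounds yields \eqref{appr_est}.

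The main obstacle is the estimate of $\nabla^r((1-\chi)g)$ on the transition layer, i.e. controlling the cross terms $|\partial_{x_d}^m\chi|\cdot|\nabla^{r-m}g|$. The cutoff necessarily lives on a slab of thickness $\asymp 2^{-k_0}$ (it must be $1$ on $\Delta^{0,1/2}_{k_0,j_0}$ and $0$ on the children), so $|\partial_{x_d}^m\chi|$ costs a factor $2^{k_0m}$; to absorb it one needs $\|\nabla^{r-m}g\|_{L_p(D)}\underset{\mathfrak{Z}}{\lesssim}2^{-k_0m}\|\nabla^r f\|_{L_p(D)}$, i.e. the lower order derivatives of $g$ must be estimated with the \emph{isotropic} diameter $2^{-k_0}$ of $D$ rather than with the individual edge lengths $\varphi_i(2^{-k_0})$, which can be much smaller. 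A naive anisotropic Bramble--Hilbert approximation on $D$ would instead yield a factor $\prod_i\varphi_i(2^{-k_0})^{-\beta_i}$, too large, and with a constant blowing up in the aspect ratio of $D$. The point is therefore to (i) use the Poincar\'e inequality on the \emph{convex} body $D$, whose constant is $\asymp 2^{-k_0}$ independently of the aspect ratio, and (ii) let $\chi$ depend on $x_d$ only, so that it is differentiated only in the long direction; the anisotropy re-enters solely through the rescaled Sobolev embedding bounding $\|g\|_{L_q(D)}$, which is what produces the factor $\prod_{i=1}^{d-1}\varphi_i(2^{-k_0})^{1/q-1/p}$ in \eqref{appr_est}.
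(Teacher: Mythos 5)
Your proposal is correct, and its overall architecture coincides with the paper's: reduce to smooth $f$ by density, approximate by a polynomial on the root parallelepiped, split the error with a cutoff depending only on $x_d$ whose transition layer has thickness $\asymp 2^{-k_0}$, apply Lemma \ref{emb_est1} to the piece vanishing on $\Delta^{0,1/2}_{k_0,j_0}$ (absorbing the Leibniz terms exactly as you do), and bound the piece supported near the floor by a rescaled Sobolev embedding on the single cell, which is where the anisotropic factor $\prod_i(\varphi_i(2^{-k_0}))^{1/q-1/p}$ comes from. The genuine difference is how the isotropic derivative estimates $\|\nabla^l(f-Pf)\|_{L_p}\lesssim 2^{-k_0(r-l)}\|\nabla^r f\|_{L_p}$ are produced. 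The paper uses an explicit averaged Taylor projection $P_{(r-1)}$ and proves \eqref{fprf_lp}--\eqref{der_appr} via an integral representation with kernel $|x-y|^{1-d}$ (Reshetnyak's method) on the cell rescaled to the unit cube, so the isotropy enters through $\lambda_{k_0,i}\lesssim 2^{-k_0}$ in the change of variables; you instead take the moment-matching projection on the convex box $D$ and iterate the Poincar\'e inequality on convex domains with constant $\asymp\mathrm{diam}$ \emph{uniformly in the aspect ratio}. Your route is arguably more elementary, but that uniformity is precisely the nontrivial input: the standard Riesz-potential proof of the convex Poincar\'e inequality gives a constant $\asymp\mathrm{diam}^{d+1}/|D|$, which blows up for the thin cells occurring here and would destroy the cancellation of the factors $2^{k_0 m}$ from the cutoff. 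The eccentricity-free statement is a known theorem (Payne--Weinberger for $p=2$, Acosta--Dur\'an for $p=1$, Chua--Wheeden for general $p$), so you should cite it rather than treat it as folklore. Finally, the $L_q(\Omega)$-continuity of the projection, which you suppress as a ``routine modification'', is handled in the paper by replacing $P_{k_0,j_0}$ with the $L_2(\Delta_{k_0,j_0})$-orthogonal projection $Q_{k_0,j_0}$ onto ${\cal P}_{r-1}$ and using $Q_{k_0,j_0}P_{k_0,j_0}=P_{k_0,j_0}$ together with \eqref{q_k0j0}; this is indeed the routine fix you allude to, but it (or an equivalent device) must be carried out, since your moment-matching projection involves derivatives of $f$ and is not itself bounded on $L_q$.
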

\begin{proof}
It suffices to consider $f\in {\cal W}^r_p(\Omega) \cap C^\infty(\Omega)$. Indeed, let $f\in {\cal W}^r_p(\Omega)$. Then there is a sequence $\{f_m\}_{m\in \N} \in {\cal W}^r_p(\Omega) \cap C^\infty(\Omega)$ such that $\|\nabla^r f_m -\nabla^r f\|_{L_p(\Omega)} \underset{m\to \infty}{\to} 0$; if, in addition, $f\in L_q(\Omega)$, we also have $\|f_m-f\|_{L_q(\Omega)} \underset{m\to \infty}{\to} 0$. If the assertion of Lemma is proved for functions from ${\cal W}^r_p(\Omega) \cap C^\infty(\Omega)$, the sequence $\{f_m - Q_{k_0,j_0}f_m\}_{m\in \N}$ is fundamental in $L_q(\Omega_{{\cal A}'})$ and, therefore, it converges to some function $\tilde f\in L_q(\Omega_{{\cal A}'})$. Hence $\nabla^r \tilde f = \nabla^r f$ and, therefore, $\tilde f - f \in {\cal P}_{r-1}(\Omega_{{\cal A}'})$ and $f|_{\Omega_{{\cal A}'}} \in L_q(\Omega_{{\cal A}'})$. In particular, taking ${\cal A}' = {\cal A}$, we get $f\in L_q(\Omega)$. Hence $f_m-Q_{k_0,j_0}f_m \underset{m\to \infty}{\to} f - Q_{k_0,j_0}f$ in $L_q(\Omega_{{\cal A}'})$ (by the continuity of $Q_{k_0,j_0}$); it remains to pass to the limit in the order inequality.

Further we consider functions from ${\cal W}^r_p(\Omega) \cap C^\infty(\Omega )$.

First we construct the linear operator $P_{k_0,j_0}:{\cal W}^r_p(\Omega) \cap C^\infty(\Omega ) \rightarrow {\cal P}_{r-1}(\Omega)$ such that
\begin{align}
\label{appr_est1} \|f- P_{k_0,j_0}f\|_{L_q(\Omega _{{\cal A}'})} \underset{\mathfrak{Z}}{\lesssim} 2^{-k_0(r + 1/q-1/p)}\prod _{i=1}^{d-1} (\varphi_i(2^{-k_0}))^{1/q-1/p} \|\nabla^r f\|_{L_p(\Omega_{{\cal A}'})}
\end{align}
(it may be unbounded with respect to the $L_q$-norm).

Given multi-index $\beta = (\beta_1, \, \dots, \, \beta_d)$ and vector $x=(x_1, \, \dots, \, x_d)\in \R^d$, we denote $x^\beta = x_1^{\beta_1}\dots x_d^{\beta_d}$, $\beta! = \beta_1!\dots \beta_d!$.

Given $f\in C^\infty((0, \, 1)^d)\cap {\cal W}^r_p((0, \, 1)^d)$, $x\in \R^d$, we set
$$
P_{(r-1)}f(x) = 4\int \limits _{(0, \, 1)^{d-1}\times (0, \, 1/4)} \sum \limits _{|\beta| \le r-1} \partial _\beta f(y) \frac{(x-y)^\beta}{\beta!}\, dy.
$$
Then $P_{(r-1)}f\in {\cal P}_{r-1}(\R^d)$. Notice that if $\alpha$ is a multi-index of order $l\in \{0, \, \dots, \, r-1\}$, then 
\begin{align}
\label{da_pr}
\partial _\alpha P_{(r-1)}f = P_{(r-1-l)}(\partial _\alpha f).
\end{align}
In addition, for $l\in \{0, \, 1, \, \dots, \, r-1\}$, $g\in C^\infty((0, \, 1)^d) \cap {\cal W}^{r-l}_p((0, \, 1)^d)$ we have
\begin{align}
\label{fprf_lp}
\|g-P_{(r-1-l)}g\|_{L_p([0, \, 1]^{d-1} \times [1/2, \, 1])} \underset{p,r,d}{\lesssim} \|\nabla^{r-l} g\| _{L_p([0, \, 1]^d)}
\end{align}
(to this end, we construct the integral representation for $g(x)$ at points $x\in [0, \, 1]^{d-1} \times [1/2, \, 1]$ by the formula
$$
g(x) = 4\int \limits _{[0, \, 1]^{d-1}\times [0, \, 1/4]} g(y)\, dy + \int \limits _{[0, \, 1]^d} T(x, \, y) \nabla g(y)\, dy, \quad |T(x, \, y)| \underset{d}{\lesssim} |x-y|^{1-d},
$$
and in the case $l\le r-2$ we use the method from \cite{resh2}).
Hence for $f\in C^\infty((0, \, 1)^d) \cap {\cal W}^r_p((0, \, 1)^d)$ we have
\begin{align}
\label{da_nrl}
\begin{array}{c}
\|\partial_\alpha(f - P_{(r-1)}f)\|_{L_p([0, \, 1]^{d-1}\times [1/2, \, 1])} \stackrel{\eqref{da_pr}}{=} 
\|\partial _\alpha f - P_{(r-l-1)} \partial _\alpha f \| _{L_p([0, \, 1]^{d-1} \times [1/2, \, 1])} \stackrel{\eqref{fprf_lp}}{\underset{p,r,d}{\lesssim}} \\ \lesssim\|\nabla^{r-l} (\partial_\alpha f)\|_{L_p([0, \, 1]^d)}.
\end{array}
\end{align}

Now let the function $f$ be defined on $\Delta_{k_0,j_0}$, and let $T_{\xi_{k_0,j_0}}$ be the affine operator from Notation \ref{notation}. We set $h_f(x) = f(T^{-1}_{\xi_{k_0,j_0}}x)$, where $x\in (0, \, 1)^d$, $$P_{k_0,j_0}f(x) = (P_{(r-1)}h_f)(T_{\xi_{k_0,j_0}}x), \quad x\in \Omega.$$ 

Recall that $\lambda_{k_0,i}$ is the length of the $i$th edge of the parallelepiped $\Delta_{k_0,j_0}$; it satisfies \eqref{lam_ki_est} with $k=k_0$.

Let $\alpha=(\alpha_1, \, \dots, \, \alpha_d)$, $|\alpha| = l\in \{0, \, \dots, \, r-1\}$. Then
$$
\|\partial _\alpha(f-P_{k_0,j_0}f) \| _{L_p (\Delta _{k_0,j_0}^{1/2,1})} = |\Delta_{k_0,j_0}|^{1/p} \prod_{i=1}^d \lambda_{k_0,i}^{-\alpha_i} \|\partial_\alpha (h_f-P_{(r-1)}h_f)\| _{L_p([0, \, 1]^{d-1}\times [1/2,\, 1])} \stackrel{\eqref{da_nrl}}{\underset{p,r,d}{\lesssim}}
$$
$$
\lesssim |\Delta_{k_0,j_0}|^{1/p} \prod_{i=1}^d \lambda_{k_0,i}^{-\alpha_i} \|\nabla^{r-l}(\partial_\alpha h_f) \|_{L_p([0, \, 1]^d)} =: M.
$$
Let $\beta = (\beta_1, \, \dots, \, \beta_d)$ be a multi-index of order $r-l$. Then
$$
\|\partial_\beta\partial_\alpha h_f \|_{L_p([0, \, 1]^d)} = |\Delta_{k_0,j_0}|^{-1/p} \prod_{i=1}^d \lambda_{k_0,i}^{\alpha_i+\beta_i} \|\partial_\beta\partial_\alpha f\| _{L_p(\Delta_{k_0,j_0})}.
$$
Hence
$$
M \underset{p,r,d}{\lesssim} \prod_{i=1}^d \lambda_{k_0,i}^{\beta_i} \|\nabla^r f\| _{L_p(\Delta_{k_0,j_0})} \stackrel{\eqref{lam_ki_est}}{\underset{\mathfrak{Z}}{\lesssim}} 2^{-k_0(r-l)}\|\nabla^r f\| _{L_p(\Delta_{k_0,j_0})}.
$$
We obtain
\begin{align}
\label{der_appr} \|\nabla^l(f - P_{k_0,j_0}f) \| _{L_p(\Delta _{k_0,j_0}^{1/2,1})} \underset{\mathfrak{Z}}{\lesssim} 2^{-k_0(r-l)}\|\nabla^r f\| _{L_p(\Delta_{k_0,j_0})}.
\end{align}

Let $\varphi_0:[0, \, 1] \rightarrow [0, \, 1]$ be an infinitely smooth function, $\varphi_0|_{[0, \, 1/2]} = 0$, $\varphi_0|_{[3/4, \, 1]} = 1$. We set $\varphi(x', \, x_d) = \varphi_0(x_d)$, 
$$
\varphi_{{\cal A}'}(x) = \begin{cases} \varphi(T_{\xi_{k_0,j_0}}x), & x\in \Delta_{k_0,j_0}, \\ 1, & x\in \Omega_{{\cal A}'} \backslash \Delta_{k_0,j_0}. \end{cases}
$$
We have
$$
\|f-P_{k_0,j_0}f\|_{L_q(\Omega_{{\cal A}'})} \le \| \varphi _{{\cal A}'} (f-P_{k_0,j_0}f) \|_{L_q(\Omega_{{\cal A}'})} + 
\| (1-\varphi _{{\cal A}'}) (f-P_{k_0,j_0}f) \|_{L_q(\Delta_{k_0,j_0})}.
$$
Since $\varphi _{{\cal A}'} (f-P_{k_0,j_0}f)|_{\Delta_{k_0,j_0}^{0,1/2}} = 0$ and $(1-\varphi _{{\cal A}'}) (f-P_{k_0,j_0}f)|_{\Delta_{k_0,j_0}^{3/4,1}} = 0$, we have by \eqref{nul_est}, the Sobolev embedding theorem, the change-of-variable formula and \eqref{lam_ki_est}
$$
\| \varphi _{{\cal A}'} (f-P_{k_0,j_0}f) \|_{L_q(\Delta_{k_0,j_0})} \underset{\mathfrak{Z}}{\lesssim} 2^{-k_0(r + 1/q-1/p)}\prod _{i=1}^{d-1} (\varphi_i(2^{-k_0}))^{1/q-1/p} \|\nabla^r (\varphi _{{\cal A}'}(f-P_{k_0,j_0}f))\|_{L_p(\Omega_{{\cal A}'})},
$$
$$
\| (1-\varphi _{{\cal A}'}) (f-P_{k_0,j_0}f) \|_{L_q(\Delta_{k_0,j_0})} \underset{\mathfrak{Z}}{\lesssim} $$$$\lesssim 2^{-k_0(r + 1/q-1/p)}\prod _{i=1}^{d-1} (\varphi_i(2^{-k_0}))^{1/q-1/p} \|\nabla^r ((1-\varphi _{{\cal A}'})(f-P_{k_0,j_0}f))\|_{L_p(\Delta_{k,_0,j_0})}.
$$
Further,
$$
\|\nabla^r (\varphi _{{\cal A}'}(f-P_{k_0,j_0}f))\|_{L_p(\Omega_{{\cal A}'})} \underset{p,r,d}{\lesssim} \|\nabla^r f\| _{L_p(\Omega_{{\cal A}'})} +$$$$+ \sum \limits _{l=0}^{r-1} \|\nabla^{r-l} \varphi _{{\cal A}'}\| _{C(\Delta_{k_0,j_0}^{1/2,3/4})} \|\nabla^l (f-P_{k_0,j_0}f)\| _{L_p(\Delta_{k_0,j_0}^{1/2,3/4})} \stackrel{\eqref{lam_ki_est}, \eqref{der_appr}}{\underset{\mathfrak{Z}}{\lesssim}}
$$
$$
\lesssim \|\nabla^r f\| _{L_p(\Omega_{{\cal A}'})} + \sum \limits _{l=0}^{r-1} 2^{k_0(r-l)} 2^{-k_0(r-l)} \|\nabla^r f\| _{L_p(\Omega_{{\cal A}'})} \underset{p,r,d}{\lesssim} \|\nabla^r f\| _{L_p(\Omega_{{\cal A}'})}
$$
(in estimating $\|\nabla^{r-l} \varphi _{{\cal A}'}\| _{C(\Delta_{k_0,j_0}^{1/2,3/4})}$ we take into account that the function $\varphi _{{\cal A}'}|_{\Delta_{k_0,j_0}}$ depends only on $x_d$).
The same estimate holds for $\|\nabla^r ((1-\varphi _{{\cal A}'})(f-P_{k_0,j_0}f))\|_{L_p(\Delta_{k,_0,j_0})}$.

This completes the proof of \eqref{appr_est1}.

Now we construct the linear continuous projection $Q_{k_0,j_0}$. Let $\{\tilde\eta_j\}_{j\in J}$ be an orthogonal basis in ${\cal P}_{r-1}([0, \, 1]^d)$ with respect to the norm $L_2([0, \, 1]^d)$, $\eta_j(x) = \tilde \eta_j(T_{\xi_{k_0,j_0}}x)$, $x\in \Delta_{k_0,j_0}$. We set for $f\in L_q(\Omega_{{\cal A}'})$
$$
\hat Q_{k_0,j_0} f = \sum \limits _{j\in J} \frac{\eta_j}{\|\eta_j\|^2_{L_2(\Delta_{k_0,j_0})}} \int \limits _{\Delta_{k_0,j_0}}f(x) \eta_j(x)\, dx;
$$
the function $Q_{k_0,j_0} f$ is the extending of the polynomial $\hat Q_{k_0,j_0} f$ to $\Omega$. From \eqref{c_pm} it follows that $\Omega_{{\cal A}'} \subset \Delta'_{k_0,j_0} \times (c^-_{k_0,j_0}, \, c^-_{k_0,j_0}+\tau (c^+_{k_0,j_0}-c^-_{k_0,j_0}))$, where $\tau \underset{\mathfrak{Z}}{\lesssim} 1$. Hence
$$
\|Q_{k_0,j_0} f\| _{L_q(\Omega_{{\cal A}'})} \underset{q, r, d}{\lesssim} \|\hat Q_{k_0,j_0} f\| _{L_q(\Delta_{k_0,j_0})} \le
$$
$$
\le \sum \limits _{j\in J} \|\eta_j\|^{-2} _{L_2(\Delta_{k_0,j_0})} \|\eta_j\| _{L_q(\Delta_{k_0,j_0})} \|\eta_j\| _{L_\infty(\Delta_{k_0,j_0})} \|f\|_{L_1(\Delta_{k_0,j_0})} \le
$$
$$
\le \sum \limits _{j\in J} \|\eta_j\|^{-2} _{L_2(\Delta_{k_0,j_0})} \|\eta_j\|^2 _{L_\infty(\Delta_{k_0,j_0})} |\Delta_{k_0,j_0}|\cdot \|f\|_{L_q(\Delta_{k_0,j_0})}\underset{r, \, d}{\lesssim} \|f\|_{L_q(\Delta_{k_0,j_0})}.
$$
Therefore, 
\begin{align}
\label{q_k0j0}
\|Q_{k_0,j_0}\|_{L_q(\Omega_{{\cal A}'}) \rightarrow L_q(\Omega_{{\cal A}'})} \underset{q,r,d}{\lesssim} 1.
\end{align}
Similarly we prove that the operator $Q_{k_0,j_0}$ is bounded on $L_q(\Omega)$.

Taking into account that for $f\in {\cal W}^r_p(\Omega) \cap C^\infty(\Omega)$ we have $P_{k_0,j_0}f\in {\cal P}_{r-1}(\Omega)$ and that $Q_{k_0,j_0}$ is a projection onto ${\cal P}_{r-1}(\Omega)$, we get
$$
\|f - Q_{k_0,j_0} f\|_{L_q(\Omega_{{\cal A}'})} \le \|f - P_{k_0,j_0} f\|_{L_q(\Omega_{{\cal A}'})} + \|P_{k_0,j_0}f - Q_{k_0,j_0} f\|_{L_q(\Omega_{{\cal A}'})} =
$$
$$
= \|f - P_{k_0,j_0} f\|_{L_q(\Omega_{{\cal A}'})} + \|Q_{k_0,j_0}P_{k_0,j_0}f - Q_{k_0,j_0} f\|_{L_q(\Omega_{{\cal A}'})} \stackrel{\eqref{q_k0j0}}{\underset{q,r,d}{\lesssim}}
$$
$$
\lesssim \|f - P_{k_0,j_0} f\|_{L_q(\Omega_{{\cal A}'})} \stackrel{\eqref{appr_est1}}{\underset{\mathfrak{Z}}{\lesssim}} 2^{-k_0(r + 1/q-1/p)}\prod _{i=1}^{d-1} (\varphi_i(2^{-k_0}))^{1/q-1/p} \|\nabla^r f\|_{L_p(\Omega_{{\cal A}'})}.
$$
This completes the proof.
\end{proof}

\renewcommand{\proofname}{\bf Proof of Theorems \ref{main_entr}, \ref{main_widths}}

\begin{proof}
We check that Assumptions \ref{sup1}--\ref{sup3} hold with $$\lambda_*= r+(\sigma(d-1)+1)(1/q-1/p),\quad u_*(x) = (\psi_{\Lambda}(x))^{1/p-1/q},$$ $$\gamma _* = \sigma(d-1), \quad \psi_*(x) = \psi_\Lambda(x), \quad \delta_*= \frac rd + \frac 1q -\frac 1p.$$

From the conditions of Theorems \ref{main_entr}, \ref{main_widths} it follows that the conditions of Lemma \ref{emb_est1} hold. Indeed, \eqref{rdpdq_g0} follows from \eqref{emb_hol_cond}, since $\sigma\ge 1$. The sequence \eqref{est_seq} equals to $2^{-k(r+(\sigma(d-1)+1)(1/q-1/p))}[\Lambda(2^{-k})]^{1/q-1/p}$, $k\in \N$; this together with Lemma \ref{slow_gr} implies that it is non-increasing for large $k$, and, for $q=\infty$, \eqref{geom_pr_infty} holds.

Assumption \ref{sup1} and \eqref{w_s_2} follow from Lemma \ref{lem2est}; here ${\bf V}({\cal A}_{t,i}) = \{\xi_{t,i}\}$, $\hat J_t = {\bf W} _t$ (then ${\cal A}_{t',i'}$ follows ${\cal A}_{t,i}$ if and only if $\xi_{t',i'}\in {\bf V}_1(\xi_{t,i})$). Relation \eqref{nu_t_k} holds since
$$
\# {\bf W}_k \stackrel{\eqref{del_pr_kj}}{=} \prod_{i=1}^{d-1} 2^{n_{k,i}} \stackrel{\eqref{phi_prop}, \eqref{nki_def}}{\underset{a_*,d}{\asymp}} \prod_{i=1}^{d-1} (\varphi_i(2^{-k}))^{-1} \stackrel{\eqref{prod_phi_i}}{=} \frac{2^{k\sigma(d-1)}}{\Lambda(2^{-k})}.
$$
Relation \eqref{2l} follows from the inequality $r + (\sigma(d-1)+1)(1/q-1/p) >0$. Properties 2 and 3 after \eqref{2l} follow from the definition of ${\cal A}_{t,i}$ and \eqref{card_v1}.

We prove that Assumption \ref{sup2} holds.
We split the parallelepiped $\Delta_{k,j}$ into $2^{md}$ similar equal parallelepipeds  $\Delta_{k,j,t}$, $1\le t\le 2^{md}$. From the embedding theorem, \eqref{lam_ki_est}, \eqref{fkj_vol} and the change-of-variable formula it follows that there are linear continuous operators $P_{k,j,t}: L_q(\Omega) \rightarrow {\cal P}_{r-1}(\Delta_{k,j,t})$ such that for $f \in {\cal W}^r_p(\Omega)$
$$
\|f- P_{k,j,t}f\| _{L_q(\Delta_{k,j,t})} \underset{\mathfrak{Z}}{\lesssim} \Bigl(\prod _{i=1} ^{d-1} \varphi_i(2^{-k})\cdot 2^{-m}\Bigr) ^{1/q-1/p} (2^{-k-m}) ^{1/q-1/p} \cdot 2^{-(k+m)r} \|\nabla^r f\|_{L_p(\Delta_{k,j,t})} \stackrel{\eqref{prod_phi_i}}{=}
$$
$$
=2^{-k(r+(\sigma(d-1)+1)(1/q-1/p))} (\psi_{\Lambda}(2^k))^{1/p-1/q}\cdot 2^{-m(r+d/q-d/p)} \|\nabla^r f\|_{L_p(\Delta_{k,j,t})}.
$$

Now Theorem \ref{trm2} implies Theorem \ref{main_widths}. Theorem \ref{trm1} yields the estimates for the entropy numbers of the embedding operator of the space $\hat {\cal W}^r_p(\Omega) = \{f - Q_{0,1}f:\; f\in {\cal W}^r_p(\Omega)\}$; it implies the estimate in Theorem \ref{main_entr}, since $\{Q_{0,1}f:\; \|f\| _{L_p(\Omega)} \le 1\}$ is a bounded subset in a finite-dimensional space ${\cal P}_{r-1}(\Omega)$; hence the sequence $e_n(Q_{0,1}: \tilde {\cal W}^r_p(\Omega) \rightarrow L_q(\Omega))$ decreases faster than 
a~power-law sequence due to estimates on the entropy numbers of 
finite-dimensional balls  \cite{schutt}.
\end{proof}
\renewcommand{\proofname}{\bf Proof}

\section{Proof of Theorem \ref{h_set_e_w} and the upper estimate in Example \ref{exa1}}

We first consider the more general case of the functions $h$ and $\psi$ and obtain analogues of Lemmas \ref{emb_est1}, \ref{lem2est}.

Suppose that
\begin{enumerate}
\item $h:[0, \, 1] \rightarrow [0, \, \infty)$ is a non-decreasing function, $h(t)>0$ for $t>0$, $h(2t)\le a_*h(t)$, where $a_*\ge 1$;

\item the function $\psi$ is given by the equation
\begin{align}
\label{psi_psi0_dist}
\psi(x') = 2 - \psi_0({\rm dist}(x', \, \Gamma)), 
\end{align}
where $\psi_0:[0, \, 1] \rightarrow [0, \, 1]$ is a strictly increasing function, $\psi_0|_{[2^{-\nu}, \, 2^{-\nu+1}]}$ is Lipschitz with the constant $b_*\cdot 2^{\nu} \psi_0(2^{-\nu})$ (here $b_*>0$ does not depend on $\nu$); the inverse function $\varphi_0 = \psi_0^{-1}$ satisfies the conditions $\varphi_0(t) \le a_*t$, $\varphi_0(2t) \le a_* \varphi_0(t)$. We also suppose that the conditions of Lemma \ref{emb_est1} hold with $\varphi_1=\dots = \varphi_{d-1}:= \varphi_0$.
\end{enumerate}

We argue as in \S 2, 3 with $\varphi_1=\dots = \varphi_{d-1}:= \varphi_0$. The numbers $n_k\in \N$ are defined by the condition similar to \eqref{nki_def}:
\begin{align}
\label{2nkphi0k3}
2^{-n_k+1} \le \varphi_0(2^{-k-4}) < 2^{-n_k+2}.
\end{align}
Consider the partition of $(0, \, 1)^{d-1}$ into open cubes 
\begin{align}
\label{del_pr_kj1}
\Delta'_{k,j} = \prod_{i=1}^{d-1} (\zeta_{k,j,i}, \, \zeta_{k,j,i} + 2^{-n_k}), \quad j\in {\bf W}_k.
\end{align}
The sets ${\bf W}_{k-1,j}$ are defined according to \eqref{w_kj}.

Given sets $A$, $B\subset \R^{d-1}$, we denote
$$
{\rm dist}(A, \, B) = \inf _{x'\in A, \, y'\in B}\|x'-y'\|_{l_\infty ^{d-1}}.
$$

Let
\begin{align}
\label{jk1jk2}
J_{k,1} = \{j\in {\bf W}_k:\; {\rm dist}(\Delta'_{k,j}, \, \Gamma)\le 2^{-n_k}\}, \quad J_{k,2} = {\bf W}_k \backslash J_{k,1}.
\end{align}

The numbers $c^{\pm}_{k,l}$ ($l\in {\bf W}_{k-1,j}$, $j\in J_{k-1,1}$) are defined according to \eqref{c_kl_pm}. We set
$$
U_{k,l} = \{(x', \, x_d):\; x'\in \Delta'_{k,l}, \; c^+_{k,l}\le x_d< \psi(x')\}.
$$

Notice that from the definition of numbers $c^{+}_{k,l}$ it follows that
\begin{align}
\label{inf_psi} \inf _{x'\in \Delta'_{k,l}} \psi(x')= c^+_{k,l} + 2^{-k-1}.
\end{align}

We claim that
\begin{align}
\label{cpmkl2k} c^+_{k,l} - c^-_{k,l} \asymp 2^{-k}.
\end{align}
Indeed, as in proof of \eqref{c_pm} we get that $c^+_{k,l} - c^-_{k,l} = 2^{-k-1} -\psi(x'_-) + \psi(x'_+)$, where $x'_{\pm} \in \overline{\Delta}'_{k-1,j}$, $j\in J_{k-1,1}$. Hence it suffices to check that $|\psi(x'_-) - \psi(x'_+)|\le 2^{-k-2}$. From \eqref{psi_psi0_dist} we get
$$
|\psi(x'_-) - \psi(x'_+)| \le \psi_0({\rm dist}(x'_-, \, \Gamma)) + \psi_0({\rm dist}(x'_+, \, \Gamma)) \stackrel{\eqref{del_pr_kj1}, \eqref{jk1jk2}}{\le}$$$$\le 2\psi_0(2^{-n_{k-1}+1}) \stackrel{\eqref{2nkphi0k3}}{\le} 2^{-k-2}.
$$

Let $j\in J_{k,2}$, $j\in {\bf W}_{k-1,t}$, $t\in J_{k-1,1}$. Denote by $L_{k,j}$ the Lipschitz constant of the function $\psi|_{\Delta'_{k,j}}$, and by $L_{0,k,j}$, the Lipschitz constant of the function $\psi_0|_{[2^{-n_k}, \, 2^{-n_{k-1}+1}]}$. Then, for any $x'\in \Delta'_{k,j}$, we have ${\rm dist}(x', \, \Gamma)\in [2^{-n_k}, \, 2^{-n_{k-1}+1}]$; therefore, $L_{k,j}\le L_{0,k,j}$, and by the monotonicity of the functions $\psi_0$ and $\varphi_0$ we get
$$
L_{0,k,j}\stackrel{\eqref{psi_psi0_dist}}{\le} \max _{n_{k-1}\le \nu \le n_k} b_*\cdot 2^{\nu} \psi_0(2^{-\nu}) \le b_*\cdot 2^{n_k} \psi_0(2^{-n_{k-1}}) \stackrel{\eqref{2nkphi0k3}}{\le} b_*\cdot 2^{n_k} \psi_0(\varphi_0(2^{-k-3})) \lesssim b_*\cdot 2^{n_k-k}.
$$
Hence
\begin{align}
\label{lkj_est} L_{k,j} \underset{b_*}{\lesssim} 2^{n_k-k}.
\end{align}

Let ${\cal A}$ be the tree defined in \S 2. Consider its subtree $\tilde{\cal A}$ with the vertex set $\{\xi_0\}\cup \{\xi_{k,l}:\; k\in \N, \, l\in {\bf W}_{k-1,j}, \, j\in J_{k-1,1}\}$. We define the mapping $\hat F$ on the set ${\bf V}(\tilde {\cal A})$ as follows:
$$
\hat F(\xi_{k,l}) = \begin{cases} \Delta_{k,l}, & l\in J_{k,1}, \\ \Delta_{k,l} \cup U_{k,l}, & l\in J_{k,2}. \end{cases}
$$
Then $\{\hat F(\xi)\} _{\xi\in {\bf V}(\tilde {\cal A})}$ is a partition of $\Omega$. The arguments are almost the same as in \S 2. We use notation from the proof. In the case $x_d> c^+_{k,l}$ it suffices to condider only $l\in J_{k,1}$, and in the case $x_d>c^+_{k+1,t}$, only $t\in J_{k+1,1}$. We show that the last case is impossible. Indeed, there is a point $x'_+\in \overline{\Delta}'_{k+1,t}$ such that
$$
x_d> \psi(x'_+) - 2^{-k-2}\ge \psi(x') - 2^{-k-2} - |\psi(x') - \psi(x'_+)| \ge
$$
$$
\ge \psi(x') - 2^{-k-2}-\psi_0({\rm dist}(x', \, \Gamma))- \psi_0({\rm dist}(x'_+, \, \Gamma)).
$$
It suffices to show that
$$
\psi_0({\rm dist}(x', \, \Gamma))\le 2^{-k-4}, \quad \psi_0({\rm dist}(x'_+, \, \Gamma)) \le 2^{-k-4}.
$$
We check the first inequality (the second one is similar). Since $x'\in \overline{\Delta}'_{k+1,t}$ and $t\in J_{k+1,1}$, we have by \eqref{del_pr_kj1}, \eqref{jk1jk2}
$$
\psi_0({\rm dist}(x', \, \Gamma)) \le \psi_0(2^{-n_{k+1}+1}) \stackrel{\eqref{2nkphi0k3}}{\le} \psi_0(\varphi_0(2^{-k-5})) = 2^{-k-5}.
$$

Denote $\mathfrak{Z}_1 = (p, \, q, \, r, \, d, \, h, \, \psi_0, \, c_*)$, where $c_*$ is from Definition \ref{def_h_set}.

The estimate of $\|f-Q_{k_0,j_0}f\|_{L_p(\Omega_{{\cal A}'})}$ can be proved as in \S 3 (see Lemmas \ref{emb_est1}, \ref{lem2est}); it has the form
\begin{align}
\label{t3pr}
\|f-Q_{k_0,j_0}f\|_{L_q(\Omega_{{\cal A}'})} \underset{\mathfrak{Z}_1}{\lesssim} 2^{-k_0(r+1/q-1/p)} (\varphi_0(2^{-k_0})) ^{(1/q-1/p)(d-1)} \|\nabla^r f\| _{L_p(\Omega_{{\cal A}'})}.
\end{align}
In obtaining the analogue of Lemma \ref{emb_est1} we modify the definition of the polygonal lines $\gamma_{x,x_0}$ and of the numbers $r_i(t)$. They are constructed as follows (the notation is taken from the proof of Lemma \ref{emb_est1}).

Let $x\in \hat F(\xi_{k_0+s,j_s})$. If $j_s\in J_{k_0+s,1}$ or $x\in \Delta_{k_0+s,j_s}$ with $j_s\in J_{k_0+s,2}$, then $\gamma_{x,x_0}$ and $r_i(t)$ are defined as in the proof of Lemma \ref{emb_est1}.

Consider the case $j_s\in J_{k_0+s,2}$, $x\in U_{k_0+s,j_s}$. Let $\varkappa\in \N$ (this number will be chosen later in dependence of $d$, $a_*$, $b_*$). We split $\Delta'_{k_0+s,j_s}$ into $2^{\varkappa (d-1)}$ same cubes $\Delta'_{k_0+s,j_s,m}$, $1\le m\le 2^{\varkappa d}$. Let $x'_{k_0+s,j_s,m}$ be the center of the cube $\Delta'_{k_0+s,j_s,m}$.

Let $x = (x', \, x_d)\in U_{k_0+s,j_s}$, $x'\in \Delta'_{k_0+s,j_s,m}$. We set $$\tilde x_{(s)} = \Bigl(x'_{k_0+s,j_s,m}, \, \frac{c^-_{k_0+s,j_s} + c^+ _{k_0+s,j_s}}{2}\Bigr)\in \Delta_{k_0+s,j_s},$$
$$\gamma_{x,x_0}(t) = \begin{cases} x\frac{|x-\tilde x_{(s)}|-t}{|x-\tilde x_{(s)}|}+ \tilde x_{(s)}\frac{t}{|x-\tilde x_{(s)}|}, & 0\le t\le |x-\tilde x_{(s)}|, \\ \gamma_{\tilde x_{(s)},x_0}(t-|x-\tilde x_{(s)}|), & |x-\tilde x_{(s)}|\le t\le T_{\tilde x_{(s)},x_0} + |x-\tilde x_{(s)}|.\end{cases}$$
The number $\tilde t_{(s)}$ is defined by the equation $\gamma_{x,x_0}(\tilde t_{(s)}) = \tilde x_{(s)}$.

For $i=1, \, \dots, \, d-1$ we set
$$
r_i(t) = \begin{cases} 0, & t = 0, \\ 2^{-n_{k_0+s}}, & t = \tilde t_{(s)}, \\ 2^{-n_{k_0+s-1}}, & t = \tilde t_{(s-1)}, \\ 2^{-n_{k_0+l}}, & t = t_{(l)}, \; 1\le l\le s-1, \\ 2^{-n_{k_0}}, & t = T_{x,x_0},\end{cases}
$$
$$
r_d(t) = \begin{cases} 0, & t = 0, \\ 2^{-k_0-s}, & t = \tilde t_{(s)}, \\ 2^{-k_0-s+1}, & t = \tilde t_{(s-1)}, \\ 2^{-k_0-l}, & t = t_{(l)}, \; 1\le l\le s-1, \\ 2^{-k_0}, & t = T_{x,x_0};\end{cases}
$$
after that we interpolate $r_i(t)$ as the piecewise-affine functions on $[0, \, T_{x,x_0}]$ with knots at $0, \, \tilde t_{(s)}, \, \tilde t_{(s-1)}, \, t_{(s-1)}, \, t_{(s-2)}, \,\dots, \, t_{(1)}, \, T_{x,x_0}$.

The number $\varkappa = \varkappa(d, \, a_*, \, b_*)$ is chosen sufficiently large, and $c = c(d, \, a_*)$ is sufficiently small; then the inclusion \eqref{incl_c} and the inequalities \eqref{dot_gamma} hold, as well as \eqref{t_y_bxx0} and the estimates \eqref{t_por_xmy}, \eqref{max_xi} for $y\in \hat F(\xi_{k_0+s-1,j_{s-1}})\cup \hat F(\xi_{k_0+s,j_s})$ (here we use \eqref{lkj_est}).

After that we repeat arguments from Lemmas \ref{emb_est1}, \ref{lem2est}, taking into account \eqref{inf_psi}, \eqref{cpmkl2k} and \eqref{lkj_est}, and obtain \eqref{t3pr}.

We show that
\begin{align}
\label{jk1} \# J_{k,1} \underset{\mathfrak{Z}_1}{\lesssim} \frac{1}{h(2^{-n_k})}.
\end{align}

Let $\Gamma_k = \{y_i:\; 1\le i\le N_k\}$ be a subset in $\Gamma$ of maximal cardinality such that $\|y_i-y_{i'}\|_{l_\infty^{d-1}} \ge 2^{-n_k+2}$, $i\ne i'$. Then $\Gamma_k$ is a $2^{-n_k+2}$-net for $\Gamma$. Let $B_i$ be the balls with respect to the $l_\infty^{d-1}$-norm with centers at $y_i$ and radii $2^{-n_k}$. They do not intersect pairwise and $\mu(B_i) \stackrel{\eqref{h_set_def}}{\underset{c_*,d}{\asymp}} h(2^{-n_k})$. Hence 
\begin{align}
\label{nk_1h}
N_k \underset{\mathfrak{Z}_1}{\lesssim} \frac{1}{h(2^{-n_k})}.
\end{align}

Given $j\in J_{k,1}$, we take an index $i(j)$ such that $y_{i(j)}$ is the nearest point to $\Delta'_{k,j}$ from $\Gamma_k$; i.e. ${\rm dist}(\Delta'_{k,j}, \, \Gamma_k) = {\rm dist}(y_{i(j)}, \, \Delta'_{k,j})$. Since $\Gamma_k$ is the $2^{-n_k+2}$-net for $\Gamma$, we get by \eqref{jk1jk2} that ${\rm dist}(\Delta'_{k,j}, \, \Gamma_k) \lesssim 2^{-n_k}$. Hence, since the length of the edge in $\Delta'_{k,j}$ is $2^{-n_k}$, we get that $\{j\in J_{k,1}:\; i(j)=i\}\underset{d}{\lesssim} 1$ for each $i\in \{1, \, \dots, \, N_k\}$. This together with \eqref{nk_1h} implies \eqref{jk1}.

Under the conditions of Theorem \ref{h_set_e_w}, we have $h(t) = t^\theta$, $\psi_0(t) = t^{1/\sigma}$, $\varphi_0(t) = t^\sigma$. From the definition of $\tilde {\cal A}$, \eqref{card_v1}, \eqref{2nkphi0k3}, \eqref{t3pr} and \eqref{jk1} it follows that
Assumptions \ref{sup1} and \ref{sup3} hold with $\lambda_* = r+(\sigma(d-1)+1)(1/q-1/p)$, $\gamma_* = \sigma \theta$, $u_*\equiv 1$, $\psi_* \equiv 1$.

Now we check that Assumption \ref{sup2} holds. Indeed, let $T_{\xi_{k,l}}$ be the affine operator from Notation \ref{notation}. Then $T_{\xi_{k,l}}(\hat F(\xi_{k,l}))$ has the form
$$
G:=\{(x', \, x_d):\; x'\in (0, \, 1)^{d-1}, \; 0< x_d< \tilde \psi(x')\},
$$
where $\tilde \psi'$ is Lipschitz with the constant $C = C(\mathfrak{Z}_1)$, $\inf _{x'\in (0, \, 1)^{d-1}} \tilde \psi(x') \in [1, \, C_1]$, where $C_1 = C_1(\mathfrak{Z}_1)\ge 1$. For each $m\in \Z_+$, $n\in \N$ there is a partition $T_{m,n}$ of the set $G$ such that the conditions of Assumption \ref{sup2} hold with $w_* \equiv c_2 = c_2(\mathfrak{Z}_1)$, $\delta_* = \frac rd + \frac 1q - \frac 1p$ (it follows, e.g., from \cite[Lemma 8]{vas_john}). Applying the change-of-variable formula, we get that Assumption \ref{sup2} holds with $w_*(\xi_{k,l}) = 2^{-k(r +(\sigma(d-1)+1)(1/q-1/p))}$, $\delta_* = \frac rd + \frac 1q - \frac 1p$.

It remains to apply Theorems \ref{trm1}, \ref{trm2}.

\vskip 0.3cm

In Example \ref{exa1} we argue similarly, taking $\varphi_1(t)=\dots =\varphi_\theta(t) = t$, $\varphi_{\theta+1}(t) = \dots = \varphi_{d-1}(t) = t^\sigma$. We obtain that Assumptions \ref{sup1}--\ref{sup3} hold with $\lambda_* = r+(1/q-1/p)(\theta+1 +\sigma(d-\theta-1))$, $\gamma_* = \theta$, $u_*\equiv 1$, $\psi_* \equiv 1$, $\delta_* = \frac rd + \frac 1q - \frac 1p$.

\section{The example for the lower estimate}

Here for convenience we consider the cube $Q = [-1/2, \, 1/2]^{d-1}$ instead of $[0, \, 1]^{d-1}$. The function $\psi$ is the same as in Theorem \ref{h_set_e_w}; i.e., 
\begin{align}
\label{psi_x_2mdist}
\psi(x') = 2 -({\rm dist}(x', \, \Gamma))^{1/\sigma},
\end{align}
where $\sigma \ge 1$,
$$
\Omega =\{(x', \, x_d):\; x'\in (-1/2, \, 1/2)^{d-1}, \; 0<x_d<\psi(x')\}.
$$
Here $\Gamma$ is the Cantor-type $h$-set constructed in \cite{bricchi}, with $h(t) = t^\theta$, $0<\theta < d$. We briefly recall the construction.

Let $h(t) = t^\theta$, $0<\theta<d$. Then there exist a sequence of positive numbers $\{\lambda_k\}_{k\in \N}$ and a number $m = m(\theta, \, d)\in \N$, $m\ge 2$, such that 
\begin{gather}
\label{h_lll}
h(\lambda_1\lambda_2\dots \lambda_k) \underset{\theta,d}{\asymp} m^{-(d-1)k}, 
\\
\label{inf_k_lk}
0< \inf _k \lambda_k\le \sup _k \lambda_k <m^{-1}
\end{gather}
(see \cite[proof of Theorem 7.4]{bricchi} with $n:=d-1$).

From \eqref{h_lll} it follows that, for $h(t) = t^\theta$, 
\begin{align}
\label{h_lll1}
\lambda_1\lambda_2\dots \lambda_k \underset{\theta, d}{\asymp} m^{-(d-1)k/\theta}.
\end{align}

We split $Q$ into $m^{d-1}$ same cubes; denote their centers by $v_j$. We set $f_{k,j}(x') =\lambda_k x'+v_j$, $k\in \N$, $j=1, \, \dots, \, m^{d-1}$. It was proved that the sequence of the sets 
$$
E_k = \cup_{i_1, \, \dots, \, i_k\in \{1, \, \dots, \, m^{d-1}\}} f_{1,i_1} \circ \dots \circ f_{k,i_k} (Q)
$$
converges in the Hausdorff metric to a compact set $\Gamma$, which will be the desired $h$-set (see \cite[proof of Theorem 6.6]{bricchi}). We can see from the definition that $E_{k+1} \subset E_k$ ($k\in \N$) and each cube $f_{1,i_1} \circ \dots \circ f_{k,i_k} (Q)$ contains an element of $\Gamma$.

We show that for such $\Gamma$ the lower estimates for the widths and the entropy numbers have the same order as the upper estimates from Theorem \ref{h_set_e_w}.

The set of parameters $\mathfrak{Z}_1$ is defined as in the previous section.

It suffices to construct functions $\varphi_{k,j} \in \tilde{\cal W}^r_p(\Omega)$, $k\in \N$, $1\le j\le m^{(d-1)k}$ with pairwise non-intersecting supports such that 
\begin{align}
\label{phi_kj} \|\nabla^r \varphi_{k,j}\| _{L_p(\Omega)} + \|\varphi_{k,j}\| _{L_p(\Omega)} \underset{\mathfrak{Z}_1}{\lesssim} m^{k(d-1)\frac{r-(\sigma(d-1)+1)/p}{\theta \sigma}}, \quad \|\varphi_{k,j}\| _{L_q(\Omega)} \underset{\mathfrak{Z}_1}{\gtrsim} m^{-k(d-1)\frac{(\sigma(d-1)+1)/q}{\theta \sigma}}
\end{align}
(the lower estimates for the entropy numbers can be proved as Lemma 12 in \cite{vas_entr}; the lower estimates for the widths follow from Lemma 6 in \cite{vas_width}).

We numerate the cubes $f_{1,i_1} \circ \dots \circ f_{k,i_k} (Q)$ ($1\le i_1, \, \dots, \, i_k\le m^{d-1}$) by indices $j\in \{1, \, \dots, \, m^{(d-1)k}\}$ and denote them by $Q_{k,j}$. Since $\Gamma \subset E_{k+1}$, we have for each boundary point $x'\in \partial Q_{k,j}$ 
$$
{\rm dist}(x', \, \Gamma) \ge \lambda_1\dots \lambda_k \cdot \frac{m^{-1}-\lambda_{k+1}}{2};
$$
hence
$$
\psi(x') \stackrel{\eqref{psi_x_2mdist}}{\le} 2 - (\lambda_1\dots \lambda_k(m^{-1}-\lambda_{k+1})/2)^{1/\sigma} \stackrel{\eqref{inf_k_lk}, \eqref{h_lll1}}{\le} 2-c_1 \cdot m^{-(d-1)k/\theta \sigma},
$$
where $c_1 = c_1(\theta,\, \sigma, \, d)>0$.
We set 
\begin{align}
\label{bk_def}
b_k = 2 -\frac{c_1}{2}\cdot m^{-(d-1)k/\theta \sigma}.
\end{align}
Then $b_k>0$,
\begin{align}
\label{psi_b_k} \psi(x') < b_k, \quad x'\in \partial Q_{k,j}.
\end{align}

Let $\rho \in C^\infty[0, \, \infty)$, $\rho|_{[0, \, 1/2]} = 0$, $\rho|_{[1, \, \infty]} = 1$, $\rho(t)\in [0, \, 1]$ for all $t\in [0, \, \infty)$. Given $k\in \N$, $j\in \{1, \, \dots, \, m^{(d-1)k}\}$, we set $\rho_k(t) = \rho \Bigl( 2\cdot \frac{t-b_k}{2-b_k}\Bigr)$,
$$
\varphi_{k,j}(x', \, x_d) = \begin{cases} 0, & (x', \, x_d)\in \Omega, \; x_d\le b_k \text{ or }x' \notin Q_{k,j}, \\ \rho_k(x_d), & (x', \, x_d)\in \Omega, \;x_d> b_k \text{ and }x' \in Q_{k,j}.\end{cases}
$$
By \eqref{psi_b_k}, $\varphi_{k,j}\in C^\infty(\Omega)$. We claim that \eqref{phi_kj} holds for these functions.

Indeed, for $x'\in Q_{k,j}$, $x_d>b_k$ we have $$|\nabla^r\varphi_{k,j}(x', \, x_d)| \underset{\mathfrak{Z}_1}{\lesssim} \frac{1}{(2-b_k)^r} \stackrel{\eqref{bk_def}}{\underset{\mathfrak{Z}_1}{\lesssim}} m^{k(d-1)r/\theta \sigma},$$
$$
{\rm mes}(Q_{k,j}\times [b_k, \, 2]) \stackrel{\eqref{bk_def}}{\underset{\mathfrak{Z}_1}{\asymp}} (\lambda_1\dots \lambda_k)^{d-1}\cdot m^{-k(d-1)/\theta \sigma} \stackrel{\eqref{h_lll1}}{\underset{\mathfrak{Z}_1}{\asymp}} m^{-k(d-1)((d-1)\sigma+1)/\theta \sigma}.
$$
This implies the first estimate in \eqref{phi_kj}.

Let us prove the second estimate in \eqref{phi_kj}. If 
\begin{align}
\label{2xdbk}
2\frac{x_d-b_k}{2-b_k}\ge 1,
\end{align}
then
\begin{align}
\label{phi11111}
\varphi(x', \, x_d)=1 \text{ for }x'\in Q_{k,j}.
\end{align}
The condition \eqref{2xdbk} is equivalent to $2-x_d \le \frac{c_1}{4} m^{-(d-1)k/\theta \sigma}$. Recall that $\Gamma \cap Q_{k+l,i}\ne \varnothing$ for any $l\in \Z_+$, $i\in \{1, \, 2, \, \dots, \, m^{(d-1)(k+l)}\}$. Hence if $x'\in Q_{k+l,i} \subset Q_{k,j}$, then ${\rm dist}(x', \, \Gamma) \le \lambda_1\dots \lambda_{k+l}$, 
$$
\psi(x') \ge 2 - (\lambda_1\dots \lambda_{k+l})^{1/\sigma} \stackrel{\eqref{h_lll1}}{\ge} 2 - c_2m^{-(d-1)(k+l)/\theta \sigma},
$$
where $c_2 = c_2(\mathfrak{Z}_1)>0$. Therefore, there exists $l = l(\mathfrak{Z}_1)\in \N$ such that the condition $x_d\le \frac{b_k}{4} + \frac 32 \stackrel{\eqref{bk_def}}{=} 2 - \frac{c_1}{8} m^{-k(d-1)/\theta \sigma}$, $x'\in Q_{k+l,i}$ implies the inequality $x_d< \psi(x')$. Hence $Q_{k+l,i}\times [(2+b_k)/2, \, b_k/4 + 3/2]\subset \Omega$,
$$
{\rm mes}(Q_{k+l,i}\times [(2+b_k)/2, \, b_k/4 + 3/2]) \stackrel{\eqref{bk_def}}{\underset{\mathfrak{Z}_1}{\asymp}} $$$$ \asymp (\lambda_1\dots \lambda_{k+l})^{d-1}\cdot m^{-k(d-1)/\theta \sigma} \stackrel{\eqref{h_lll1}}{\underset{\mathfrak{Z}_1}{\asymp}} m^{-k(d-1)((d-1)\sigma+1)/\theta \sigma}.
$$
This together with \eqref{2xdbk}, \eqref{phi11111} implies the second estimate in \eqref{phi_kj}.

\section{Supplement}

In this section we show that the domain from \cite{besov_holder} can be represented as a finite union of domains for which the estimates from Theorems \ref{main_entr}, \ref{main_widths} hold.

Let $\lambda = (\lambda_1, \, \dots, \, \lambda_{d-1}, \, \lambda_d)$, $\lambda_i\ge 1$ for $i=1, \, \dots, \, d-1$, \, $\lambda_d=1$. Let $\eta>0$, $T>0$. Given $0<t\le T$, we denote 
\begin{align}
\label{tl_eta}
t^\lambda [-\eta, \, \eta]^d = \prod _{i=1}^d [-t^{\lambda_i}\eta, \, t^{\lambda_i}\eta], \quad t^\lambda (-\eta, \, \eta)^d = \prod _{i=1}^d (-t^{\lambda_i}\eta, \, t^{\lambda_i}\eta).
\end{align}
Let $e_1, \, \dots, \, e_d$ be the standard basis in $\R^d$. We set
\begin{align}
\label{v_lambda}
V_\lambda = \cup _{0<t\le T} (-te_d + t^\lambda [-\eta, \, \eta]^d).
\end{align}

Recall that the domain $G\subset \R^d$ satisfies cone condition if for each point $x\in G$ there is a right circular cone $V_x$ of fixed aperture angle  and fixed height with vertex at~$0$ such that  $x + V_x\subset G$. Bounded domains with cone condition are John domains; hence the entropy numbers for the embedding operator ${\rm Id}:\tilde {\cal W}^r_p(G)\to L_q(G)$ and the widths $\vartheta_n(\tilde {\cal W}^r_p(G), \, L_q(G))$ have the same orders as for $G = (0, \, 1)^d$ (if the orders are known). 

In \cite{besov_holder} the domain had the form $G = \cup _{j=1}^{j_0} G_j$, where $G_j$ are open bounded sets with the following property: there are an orthogonal operator $R_j$ and numbers $\lambda^{(j)}_i\ge 1$ $(i=1, \, \dots, \, d)$ such that $\lambda^{(j)}_d=1$, $\lambda^{(j)}_1 +\dots + \lambda^{(j)}_d = \Lambda$, $G_j + R_jV_{\lambda^{(j)}} \subset G$.
Notice that in this case
\begin{align}
\label{g_besov}
G = \cup _{j=1}^{j_0} (G_j + R_jV_{\lambda^{(j)}}).
\end{align}
We show that if $G$ has the form \eqref{g_besov}, where $G_j$ are open bounded sets, then $G$ is a finite union of domains with cone condition and domain from the class ${\cal G}'_{\varphi_1, \, \dots, \, \varphi_{d-1}}$ with $\varphi_i(t) = \alpha_j \cdot t^{\lambda_i^{(j)}}$, $i=1, \, \dots, \, d-1$, where $\alpha_j\in (0, \, 1]$ are some constants.

It suffices to consider the case 
\begin{align}
\label{g_u_vl}
G = U + V_\lambda,
\end{align}
where $U\subset \R^d$ is an open bounded set. Moreover, we can cover the set $U$ by translations of the parallelepipeds $T^\lambda (-\eta/10, \, \eta/10)^d$ (see notation \eqref{tl_eta}) and reduce the problem to the case
\begin{align}
\label{u_sbs_t}
U \subset T^\lambda (-\eta/10, \, \eta/10)^d.
\end{align}
If $\eta\ge 1$, then $V_\lambda = -Te_d + T^\lambda [-\eta, \, \eta]^d$ and $U+V_\lambda$ satisfies cone condition.

Further we consider the case $0<\eta<1$.

For $0<t_*< T$ we put
$$
V_\lambda^{t_*} = \cup _{t_*\le t\le T} (-te_d + t^\lambda [-\eta, \, \eta]^d).
$$
Then the set $U + V_\lambda^{t_*}$ is a bounded domain with cone condition.

We denote $\lambda'=(\lambda_1, \, \dots, \, \lambda_{d-1})$.

Let $E$ be the orthogonal projection of the set $U$ to the subspace $\{(x_1, \, \dots, \, x_{d-1}, \, 0):\; x_i\in \R, \; 1\le i\le d\}$. By \eqref{u_sbs_t}, we have $E\subset T^{\lambda'}[-\eta/10, \, \eta/10]^{d-1}$. Let
\begin{gather}
\label{pi_def_dop}
\Pi = T^{\lambda'}(-\eta/5, \, \eta/5)^{d-1}, \\ 
\label{til_g_def_dop}
\tilde G = \{(x', \, x_d)\in U+V_\lambda:\; x'\in \Pi\}.
\end{gather}
Then
\begin{align}
\label{pi_sbs_xi}
\Pi \subset \xi' + T^{\lambda'}[-\eta, \, \eta]^{d-1}, \quad \xi'\in E.
\end{align}

We denote 
\begin{align}
\label{m_pm}
M_- = \inf \{\xi_d-T -T\eta:\; \xi\in U\}, \quad M_+ = \sup \{\xi_d-T -T\eta:\; \xi\in U\}.
\end{align}
Then
\begin{align}
\label{mpmmm}
M_+-M_- \stackrel{\eqref{u_sbs_t}}{\le} T\eta/5.
\end{align}

We show that 
\begin{align}
\label{pi_times_g}
\Pi \times (M_-, \, M_+]\subset \tilde G.
\end{align}
Indeed, let $\varepsilon>0$, $\xi=(\xi', \, \xi_d)\in U$, \, $\xi_d-T-T\eta\le M_-+\varepsilon$. Then
$$
\xi+ V_\lambda \stackrel{\eqref{v_lambda}}{\supset} \xi+(-Te_d+T^\lambda[-\eta, \, \eta]^d) \stackrel{\eqref{pi_sbs_xi}}{\supset} \Pi \times [\xi_d - T -T\eta, \, \xi_d - T+T\eta] \supset
$$
$$
\supset \Pi \times [M_-+\varepsilon, \, M_-+2T\eta] \stackrel{\eqref{mpmmm}}{\supset} \Pi \times [M_-+\varepsilon, \, M_+].
$$

Given $x'\in \Pi$, we put
\begin{align}
\label{psi_def_dop}
\Psi(x') = \sup \{c\in \R:\; (x', \, c)\in \tilde G\}
\end{align}
(the set of such numbers $c$ is non-empty, since by  \eqref{til_g_def_dop} and \eqref{pi_sbs_xi} for all $x'\in \Pi$, $\xi'\in E$ we have $(x', \, \xi_d-Te_d)\in (\xi', \, \xi_d)-Te_d+T^\lambda[-\eta, \, \eta]^d$).

We claim that
\begin{align}
\label{podgr}
\tilde G = \{(x', \, x_d):\; x'\in \Pi, \; M_-< x_d <\Psi(x')\}.
\end{align}
The inclusion $\tilde G \subset \{(x', \, x_d):\; x'\in \Pi, \; M_-< x_d <\Psi(x')\}$ follows from \eqref{v_lambda}, \eqref{til_g_def_dop}, \eqref{m_pm} and \eqref{psi_def_dop}. Let us prove the inverse inclusion. Let $x'\in \Pi$.
We show that
\begin{align}
\label{x_pr_times_m}
\{x'\} \times (M_-, \, \Psi(x'))\subset \tilde G.
\end{align}
We take an arbitrary number $c< \Psi(x')$ such that $(x', \, c)\in \tilde G$ and check that
\begin{align}
\label{x_pr_times_m1}
\{x'\} \times (M_-, \, c]\subset \tilde G.
\end{align}
There exists a point $(\xi', \, \xi_d)\in U$ such that $(x', \, c)\in (\xi', \, \xi_d) + V_\lambda$; hence $c = \xi_d + tz\eta - t$ for some $t\in (0, \, T]$, $z\in [-1, \, 1]$. By increasing  $c$ if necessary, we may assume that $z=1$. Further, $x'\in \xi'+t^{\lambda'}[-\eta, \, \eta]^{d-1}$; therefore $$\{x'\}\times [\xi_d-t\eta -t, \, c]\subset (\xi', \, \xi_d)+t^\lambda[-\eta, \, \eta]^d \subset \tilde G,$$ $(x', \, \xi_d-s\eta -s)\in \tilde G$ for any $s\in [t, \, T]$. Hence $\{x'\}\times [\xi_d-T\eta -T, \, c] \subset \tilde G$; from the definition of $M_+$ we get $\{x'\} \times [M_+, \, c]\subset \tilde G$. This together with \eqref{pi_times_g} implies \eqref{x_pr_times_m1}. Since $c$ can be arbitrarily close to $\Psi(x')$, we obtain \eqref{x_pr_times_m}. This completes the proof of the inclusion \eqref{podgr}.

We claim that for $0<t\le 1$
\begin{align}
\label{g_hol}
\text{if }x',\; y'\in \Pi, \; |x_i-y_i|\le \eta t^{\lambda_i}\; (i=1, \, \dots, \, d-1), \text{ then } |\Psi(x') - \Psi(y')| \le t.
\end{align}
Let $|x_i-y_i|\le \eta t^{\lambda_i}$, $i=1, \, \dots, \, d-1$; i.e.,
\begin{align}
\label{max_xi_yi} \max _{1\le i\le d-1} \Bigl(\frac{|x_i-y_i|}{\eta}\Bigr)^{1/\lambda_i} \le t.
\end{align}
We estimate from above the value $|\Psi(x')-\Psi(y')|$. Without loss of generality, $\Psi(y')\ge \Psi(x')$.

Let $\varepsilon>0$. From \eqref{v_lambda}, \eqref{til_g_def_dop}, \eqref{psi_def_dop} it follows that there are a point $\xi = (\xi', \, \xi_d)\in U$ and a number $s_0\in (0, \, T]$ such that $\Psi(y')\le \xi_d - s_0 + s_0\eta + \varepsilon$, $y'\in \xi' + s_0^{\lambda'} [-\eta, \, \eta]^{d-1}$. Hence $s_0\ge \max _{1\le i\le d-1}(|\xi_i-y_i|/\eta)^{1/\lambda_i}$. Since $0<\eta<1$, we have
\begin{align}
\label{psi_s0}
\Psi(y')\le \xi_d - s + s\eta + \varepsilon, \quad \text{where }s = \max _{1\le i\le d-1}(|\xi_i-y_i|/\eta)^{1/\lambda_i}.
\end{align}

Let $\tilde s = \max _{1\le i\le d-1}(|\xi_i-x_i|/\eta)^{1/\lambda_i}$, and the maximum attains at the position $j$. Then $x'\in \xi' + \tilde s^{\lambda'}[-\eta, \, \eta]^{d-1}$. By \eqref{pi_sbs_xi}, we have $\tilde s\le T$. This together with \eqref{v_lambda}, \eqref{til_g_def_dop}, \eqref{psi_def_dop} yields that
\begin{align}
\label{psi_x_pr}
\Psi(x') \ge \xi_d -\tilde s+\tilde s\eta.
\end{align}
From the inequality $(a+b)^\beta \le a^\beta + b^\beta$ ($a, \, b\ge 0$, $0<\beta\le 1$) it follows that 
$$
\tilde s = \Bigl( \frac{|\xi_j-x_j|}{\eta}\Bigr)^{1/\lambda_j} \le \Bigl( \frac{|\xi_j-y_j|}{\eta}\Bigr)^{1/\lambda_j} + \Bigl( \frac{|y_j-x_j|}{\eta}\Bigr)^{1/\lambda_j} \stackrel{\eqref{max_xi_yi},\eqref{psi_s0}}{\le} s + t.
$$
Hence $$\Psi(y')-\Psi(x')\stackrel{\eqref{psi_s0},\eqref{psi_x_pr}}{\le} (\tilde s-s)(1-\eta) + \varepsilon \le (1-\eta)t +\varepsilon.$$
Since $\varepsilon>0$ is arbitrary, we get $\Psi(y')-\Psi(x') \le (1-\eta)t$. This completes the proof of \eqref{g_hol}.

From \eqref{podgr} and \eqref{g_hol} it follows that $\tilde G\in {\cal G}'_{\varphi_1, \, \dots, \, \varphi_d}$ with $\varphi_i(t) = \alpha \cdot t^{\lambda_i}$, $i=1, \, \dots, \, d-1$, where $\alpha\in (0, \, 1]$ is a constant.

We claim that $G \backslash \tilde G \subset V^{t_*}_\lambda$ for some $t_*>0$. This will complete the proof.

Let $(x', \, x_d)\in G\backslash \tilde G$. By \eqref{g_u_vl} and \eqref{til_g_def_dop}, we have $(x', \, x_d)\in U+V_\lambda$ and $x'\notin \Pi$. This together with \eqref{v_lambda} implies that there are a point $(\xi',\, \xi_d)\in U$ and a number $t\in (0, \, T]$ such that $(x', \, x_d)\in (\xi', \, \xi_d)-te_d + t^\lambda [-\eta, \, \eta]^d$, and, in addition, $|x_i|\ge T^{\lambda_i}\eta/5$ holds for some $i\in \{1, \, \dots, \, d-1\}$ (see \eqref{pi_def_dop}). Since $U\stackrel{\eqref{u_sbs_t}}{\subset} T^\lambda(-\eta/10, \, \eta/10)^d$, we have $|\xi_i|\le T^{\lambda_i}\eta/10$. Therefore, $|x_i|\le T^{\lambda_i}\eta/10 + t^{\lambda_i} \eta$. Hence $T^{\lambda_i}\eta/5 \le T^{\lambda_i}\eta/10 + t^{\lambda_i} \eta$; this implies that $t\ge T/10^{1/\lambda_i}\ge T/10$.

\begin{Biblio}
\bibitem{besov_holder} O. V. Besov, ``Estimates of entropy numbers of the Sobolev embedding operator on a H\"{o}lder domain'', {\it Mat. Zametki}, {\bf 118}:3 (2025), 366--379 (in Russian; English translation: to appear).

\bibitem{bricchi} M. Bricchi, ``Existence and properties of $h$-sets'', {\it Georgian Math. J.}, {\bf 9}:1 (2002), 13--32.

\bibitem{sob_book} S.L. Sobolev, {\it Applications of functional analysis in mathematical physics,}, M., Nauka, 1988.

\bibitem{resh1}  Yu. G. Reshetnyak, ``Integral representations of differentiable functions in domains
with nonsmooth boundary'', {\it Sib. Math. J.}, {\bf 21}:6 (1981), 833--839.

\bibitem{resh2} Yu. G. Reshetnyak, ``A remark on integral representations of differentiable
functions of several variables'', {\it Sib. Mat. Zh.}, {\bf 25}:5 (1984),  198--200 (in Russian).

\bibitem{bojar} B. Bojarski, ``Remarks on Sobolev imbedding inequalities'', Complex analysis (Joensuu, 1987), Lecture Notes in Math., 1351, Springer, Berlin, 1988, 52--68.

\bibitem{besov_rasp} O. V. Besov, ``Embedding of Sobolev spaces in domains with a decaying flexible cone condition'', {\it Proc. Steklov Inst. Math.}, {\bf 173} (1987), 13--30.

\bibitem{besov84} O. V. Besov, ``Integral representations of functions and embedding theorems for a domain with a flexible horn condition'', {\it Proc. Steklov Inst. Math.}, {\bf 170} (1987), 11--31.

\bibitem{labutin1} D. A. Labutin, ``Integral representations of functions and embeddings of Sobolev spaces on cuspidal domains'', {\it Math. Notes}, {\bf 61}:2 (1997), 164--179.

\bibitem{labutin2} D. A. Labutin, ``Embedding of Sobolev Spaces on H\"{o}lder Domains'', {\it Proc. Steklov Inst. Math.}, {\bf 227} (1999), 163--172.

\bibitem{trushin} B. V. Trushin, ``Sobolev Embedding Theorems for a Class of Anisotropic Irregular Domains'', {\it Proc. Steklov Inst. Math.}, 260 (2008), 287--309.

\bibitem{haj_kosk} P. Haj\l asz, P. Koskela, ``Isoperimetric inequalities and imbedding theorems in irregular domains'', {\it J. London Math. Soc.} (2), {\bf 58}:2 (1998), 425--450.

\bibitem{kilp_mal} T. Kilpel\"{a}inen, J. Mal\'{y}, ``Sobolev inequalities on sets with irregular boundaries'', {\it Z. Anal. Anwendungen}, {\bf 19}:2 (2000), 369--380.

\bibitem{maz_pob} V. G. Maz'ya, S. V. Poborchi, ``Imbedding theorems for Sobolev spaces on domains with peak and on H\"{o}lder domains'', {\it St. Petersburg Math. J.}, {\bf 18}:4 (2007), 583--605.

\bibitem{besov01} O. V. Besov, ``Sobolev's embedding theorem for a domain with irregular boundary'', {\it Sb. Math.}, {\bf 192}:3 (2001), 323--346.

\bibitem{besov10} O. V. Besov, ``Integral estimates for differentiable functions on irregular domains'', {\it Sb. Math.}, {\bf 201}:12 (2010), 1777--1790.

\bibitem{besov14} O. V. Besov, ``Embedding of Sobolev spaces and properties of the domain'', {\it Math. Notes}, {\bf 96}:3 (2014), 326--331.

\bibitem{besov15} O. V. Besov, ``Embedding of a Weighted Sobolev Space and Properties of the Domain'', {\it Proc. Steklov Inst. Math.}, {\bf 289} (2015), 96--103.

\bibitem{besov22} O. V. Besov, ``Conditions for Embeddings of Sobolev Spaces on a Domain with Anisotropic Peak'', {\it Proc. Steklov Inst. Math.}, {\bf 319} (2022), 43--55.

\bibitem{caso_ambr} L. Caso, R. D'Ambrosio, ``Weighted spaces and weighted norm inequalities on irregular domains'', {\it J. Approx. Theory}, {\bf 167} (2013), 42--58.

\bibitem{tikh60} V. M. Tikhomirov, ``Diameters of sets in function spaces and the theory of best approximations'', {\it Russian Math. Surveys}, {\bf 15}:3 (1960), 75--111.

\bibitem{babenko} K. I. Babenko, ``Approximation of periodic functions of many variables by trigonometric
polynomials'', {\it Dokl. Akad. Nauk SSSR} {\bf 132}:5 (1960), 247--250; English transl., Soviet
Math. Dokl. 1 (1960), 513--516.

\bibitem{mityagin} B. S. Mityagin, ``Approximation of functions in $L_p$ and $C$ spaces on the torus'', {\it Mat. Sb.} {\bf 58(100)}:4 (1962), 397--414. (Russian).

\bibitem{makovoz} Yu. I. Makovoz, ``On a method for estimation from below of diameters of sets in Banach spaces'', {\it Math. USSR-Sb.}, {\bf 16}:1 (1972), 139--146.

\bibitem{ismagilov} R.S. Ismagilov, ``Diameters of sets in normed linear spaces and the approximation of functions by trigonometric polynomials'',
{\it Russian Math. Surveys}, {\bf 29}:3 (1974), 169--186.

\bibitem{majorov} V. E. Maiorov, ``Discretization of the problem of diameters'', {\it Usp. Mat. Nauk}, {\bf 30}:6(186) (1975), 179--180 (in Russian).

\bibitem{bib_kashin} B.S. Kashin, ``The widths of certain finite-dimensional
sets and classes of smooth functions'', {\it Math. USSR-Izv.},
{\bf 11}:2 (1977), 317--333.

\bibitem{kashin_sma} B. S. Kashin, ``Widths of Sobolev classes of small-order smoothness'', Moscow Univ. Math. Bull., {\bf 36}:5 (1981), 62--66.

\bibitem{kulanin1} E. D. Kulanin, ``Estimates for diameters of Sobolev classes of small-order smoothness'', {\it Vestnik Mosk. Univ. Ser. 1. Matematika. Mekhanika}, 1983, no. 2,  24--30.

\bibitem{kulanin2} E. D. Kulanin, ``On the diameters of a class of functions of bounded variation in the space $L_q(0,\, 1)$, $2<q<\infty$'', {\it Russian Math. Surveys}, {\bf 38}:5 (1983), 146--147 .

\bibitem{galeev85} E.M. Galeev, ``Kolmogorov widths in the space $\widetilde{L}_q$ of the classes $\widetilde{W}_p^{\overline{\alpha}}$ and $\widetilde{H}_p^{\overline{\alpha}}$ of periodic functions of several variables'', {\it Math. USSR-Izv.}, {\bf 27}:2 (1986), 219--237.

\bibitem{teml3} V. N. Temlyakov, ``Approximation of periodic functions of several variables by
trigonometric polynomials, and widths of some classes of functions'', {\it Math. USSR-Izv.}, {\bf 27}:2 (1986), 285--322.

\bibitem{teml4} V. N. Temlyakov, ``Approximations of functions with bounded mixed derivative'', {\it Proc. Steklov Inst. Math.}, {\bf 178} (1989), 1--121.

\bibitem{galeev87} E.M. Galeev, ``Estimates for widths, in the sense of Kolmogorov, of classes of periodic functions of several variables with small-order smoothness'',
{\it Vestnik Moskov. Univ. Ser. I Mat. Mekh.} no. 1 (1987), 26--30 (in Russian).

\bibitem{mal25} Yu. V. Malykhin, ``Kolmogorov widths of the class $W_1^1$'', {\it Math. Notes}, {\bf 117}:6 (2025), 1034--1039.

\bibitem{besov_cusp} O.V. Besov, ``Kolmogorov widths of Sobolev classes on an irregular domain'', {\it Proc. Steklov Inst. Math.} {\bf 280} (2013), 34--45.

\bibitem{vas_john} A. A. Vasil'eva, ``Widths of weighted Sobolev classes on a John domain'',
{\it Proc. Steklov Inst. Math.}, {\bf 280} (2013), 91--119.

\bibitem{evans_harris} W. D. Evans, D. J. Harris, ``Fractals, trees and the Neumann Laplacian'', {\it Math. Ann.}, {\bf 296}:3 (1993), 493--527.

\bibitem{evans_harris2} W. D. Evans, D. J. Harris, Y. Saito ``On the approximation numbers of Sobolev
embeddings on singular domains and trees'', {\it  Quart. J. Math.}, {\bf 55}:3 (2004), 267--302.

\bibitem{heinr} S. Heinrich, ``On the relation between linear $n$-widths and approximation numbers'', {\it J. Approx. Theory}, {\bf 58}:3 (1989), 315--333.

\bibitem{vas_cusp} A. A. Vasil'eva, ``Widths of Sobolev weight classes on a domain with cusp'', {\it Sb. Math.}, {\bf 206}:10 (2015), 1375--1409.

\bibitem{bir_sol} M. Sh. Birman, M. Z. Solomyak, ``Piecewise-polynomial approximations of functions of the classes $W^\alpha_p$'', {\it Math. USSR-Sb.}, {\bf 2}:3 (1967), 295--317.

\bibitem{edm_tr} D. E. Edmunds, H. Triebel, {\it Function spaces, entropy numbers, differential operators}, Cambridge Tracts in Math., 120, Cambridge Univ. Press, Cambridge, 1996.

\bibitem{schutt} C. Sch\"{u}tt, ``Entropy numbers of diagonal operators between symmetric Banach spaces'', {\it J. Approx. Theory}, {\bf 40}:2 (1984), 121--128.

\bibitem{vas_entr} A. A. Vasil'eva, ``Entropy numbers of embedding operators of function spaces on sets with tree-like structure'', {\it Izv. Math.}, {\bf 81}:6 (2017), 1095--1142.

\bibitem{edm89} D.E. Edmunds, R.M. Edmunds, ``Embeddings of anisotropic Sobolev spaces''.  Analysis and Continuum Mechanics. 1989. 269--276.

\bibitem{mattila} P. Mattila, {\it Geometry of Sets and Measures in Euclidean Spaces}, Cambridge Univ. Press, 1995.

\bibitem{vas_width} A. A. Vasil'eva, ``Widths of function classes on sets with tree-like structure'', {\it J. Approx. Theory}, {\bf 192} (2015), 19--59.

\bibitem{a_s} N. Arcozzi, R. Rochberg, E. Sawyer, ``Carleson measures for analytic Besov spaces'', {\it Rev. Mat. Iberoam.} {\bf 18}:2 (2002) 443--510.

\bibitem{vas_besov} A. A. Vasil'eva, ``Kolmogorov and linear widths of the weighted Besov classes with singularity at the origin'', {\it J. Approx. Theory}, {\bf 167} (2013), 1--41.

\bibitem{mazya_book} V. G. Maz'ya, {\it Sobolev Spaces} (Leningrad. Gos. Univ., Leningrad, 1985; Springer, Berlin, 1985).

\end{Biblio}

\end{document}